\documentclass[10pt, reqno]{amsart}

\usepackage{fullpage}
\usepackage{amsmath,amsthm,amsfonts,amssymb}
\usepackage{enumitem}

\usepackage{amsaddr}
\usepackage[colorlinks=true,citecolor=blue,linkcolor=red]{hyperref}

\newtheorem{thm}{Theorem}[section]
\newtheorem{lem}[thm]{Lemma}
\newtheorem{cor}[thm]{Corollary}
\newtheorem{prop}[thm]{Proposition}
\theoremstyle{definition}
\newtheorem{defn}[thm]{Definition}
\newtheorem{exmp}[thm]{Example}

\newtheorem{conv}[thm]{Convention}
\newtheorem{prob}[thm]{Open problem}
\theoremstyle{remark}
\newtheorem{rem}[thm]{Remark}

\renewcommand{\phi}{\varphi}
\newcommand{\CA}{\mathcal A}
\newcommand{\CB}{\mathcal B}
\newcommand{\CC}{\mathcal C}
\newcommand{\CG}{\mathcal G}
\newcommand{\CH}{\mathcal H}
\newcommand{\CP}{\mathcal P}
\newcommand{\rk}{\operatorname{rk}}
\newcommand{\val}{\operatorname{val}}
\newcommand{\red}{\operatorname{red}}
\newcommand{\cycred}{\operatorname{cycred}}
\newcommand{\xsupp}{\operatorname{supp}}
\newcommand{\blen}{\operatorname{bl}}
\newcommand{\enc}{\operatorname{enc}}
\newcommand{\CPrim}{\mathsf{CPrim}}
\newcommand{\SLP}{\mathsf{SLP}}
\newcommand{\immq}{\mathrel{\twoheadrightarrow_{\rm imm}}}
\newcommand{\ff}{\mathrel{\leq_{\!*}}}
\newcommand{\eps}{\varepsilon}

\newcommand{\Aut}{\operatorname{Aut}}
\newcommand{\Out}{\operatorname{Out}}

\newcommand{\Z}{\mathbb Z}

\begin{document}

\title[Compressed primitivity problem in free groups]{Compressed primitivity problem in free groups}

\author{Ilya Kapovich}

\address{Department of Mathematics and Statistics, Hunter College of CUNY\newline
  \indent 695 Park Ave, New York, NY 10065, U.S.A.
  \newline \indent e-mail \texttt{ik535@hunter.cuny.edu}
  \newline\indent ORCID 0000-0002-7694-6236
  }

\keywords{free group, primitive element, straight-line program, compressed word, Whitehead automorphism, Whitehead minimization, Stallings folding, computational complexity}

\hypersetup{
  pdftitle={Compressed primitivity in fixed-rank free groups},
  pdfauthor={Ilya Kapovich},
  pdfkeywords={free group, primitive element, straight-line program, compressed word, Whitehead automorphism, Whitehead minimization, Stallings folding, computational complexity}
}

\subjclass[2020]{Primary 20F10, Secondary 20E05, 68Q25, 68Q45}

\date{}

\begin{abstract}
For a fixed integer $r\ge 2$, we prove that the \emph{compressed primitivity problem} in the free group $F_r=F(x_1,\dots,x_r)$ is decidable in non-deterministic polynomial time. That is, for a  \emph{straight-line program}
$\CA$ over $\{x_1,\dots,x_r\}^{\pm1}$ representing an element $g\in F_r$,  the problem of deciding
whether $g$ is primitive in $F_r$ belongs to $\mathsf{NP}$, with input measured
by the size of $\CA$. For $r=2$, we prove that this problem is decidable in
deterministic polynomial time. We also show that, in every fixed rank
$r\ge 2$, automorphic minimality of the conjugacy class of a compressed word
in $F_r$ is decidable in deterministic polynomial time.
\end{abstract}

\maketitle

\tableofcontents

\section{Introduction}

Straight-line programs, a data-compression tool from computer science, are
useful for algorithmic problems in group theory whose inputs may grow
exponentially under algebraic operations. The word problem for $\Aut(F_r)$,
where $F_r=F(x_1,\dots,x_r)$ has finite rank $r\ge2$, is typical: a product
$\phi_n$ of $n$ generators may send a basis letter $x_i$ to a word of
exponential length, even when that word freely reduces to $x_i$. Naive
successive substitution may therefore require exponential time and space.

Schleimer~\cite{Schleimer} instead represented $\phi_n(x_i)$ by a
polynomial-size \emph{compressed word} over
$\Sigma_r=\{x_1,\dots,x_r\}^{\pm1}$ and performed free reduction directly in
compressed form. This gave deterministic polynomial-time algorithms for the
word problems in $\Aut(F_r)$, $\Out(F_r)$, and $F_r\rtimes_\phi\Z$ with
$\phi\in\Aut(F_r)$. Briefly, a \emph{straight-line program} ($\SLP$) $\CA$
over a fixed terminal alphabet $\Sigma$ is an acyclic grammar with
nonterminals $A_1,\dots,A_m$ and productions $A_i\to W_i$, where $W_i$ uses
only terminals and earlier nonterminals. Its root $A_m$ produces a word
$w_{\CA}$. We write $\lVert\CA\rVert$ for the number of nonterminals plus the
total length of all right-hand sides. A canonical encoded length
$\beta(\CA)$ differs by at most a logarithmic factor; see
Proposition~\ref{p:slp-size-comparison}. An $\SLP$ of size $n$ may produce a
word of exponential length.

Schleimer's work initiated an active study of compressed
group-theoretic problems; see Lohrey's survey~\cite{LohreySurvey} and the
account of Lohrey--Schleimer~\cite{LohreySchleimer}. Polynomial-time
algorithms are known for compressed word, conjugacy, simultaneous-conjugacy,
and centralizer problems in hyperbolic groups~\cite{HoltLohreySchleimer};
the same work proves $\mathsf{NP}$-completeness of compressed knapsack in
every infinite hyperbolic group. Polynomial-time algorithms are also known
for the compressed word and conjugacy problems in groups hyperbolic relative
to free abelian subgroups~\cite{HoltReesWord,HoltReesConjugacy}, and for
several compressed problems in finitely generated nilpotent
groups~\cite{MacdonaldMyasnikovNikolaevVassileva}. For free groups, Linton's
polynomial-time fully compressed subgroup-membership algorithm~\cite{Linton},
with a fixed number of compressed generators, is a key ingredient in our
$\mathsf{NP}$ verifier.

Strong hardness results are also known. Bartholdi, Figelius, Lohrey, and
Wei\ss~proved that the compressed word problem is $\mathsf{PSPACE}$-complete
for several natural groups, including Thompson's groups, the Grigorchuk and
Gupta--Sidki groups, and wreath products $G\wr\Z$ with $G$ finite
non-solvable or free of rank at least two~\cite{BartholdiFigeliusLohreyWeiss}.
W\"achter and Wei\ss~constructed an automaton group whose compressed word
problem is $\mathsf{EXPSPACE}$-complete~\cite{WachterWeiss}.

We study compressed primitivity in $F_r=F(X_r)$, where
$X_r=\{x_1,\dots,x_r\}$ and $\Sigma_r=X_r\sqcup X_r^{-1}$. An element is
\emph{primitive} if it belongs to a free basis. For $g\ne1$, this is equivalent to
$\langle g\rangle$ being a rank-one free factor, or to $g=\phi(x_1)$ for
some $\phi\in\Aut(F_r)$. For an $\SLP$ $\CA$ over $\Sigma_r$, let
$w_{\CA}\in\Sigma_r^*$ be its output and $g_{\CA}\in F_r$ the represented
element. The \emph{compressed primitivity problem} asks whether $g_{\CA}$ is
primitive.

To place the problem in standard complexity classes, Definition~\ref{d:slp-encoding}
gives a canonical injective encoding $\enc_r$ over a fixed alphabet
$\Omega_r$. Put $\beta(\CA)=|\enc_r(\CA)|$. Proposition~\ref{p:slp-size-comparison}
shows that
\[
 \lVert\CA\rVert\le \beta(\CA),
 \qquad
 \beta(\CA)
 =O\!\left(\lVert\CA\rVert\log(\lVert\CA\rVert+2)\right).
\]
Thus the two size measures are polynomially equivalent. Complexity classes
are measured using $\beta(\CA)$, while grammar-growth estimates use
$\lVert\CA\rVert$. Identifying an $\SLP$ with its code, define
\[
 \CPrim_r=
 \{\enc_r(\CA):\CA\text{ is an $\SLP$ over }\Sigma_r
 \text{ and }g_{\CA}\text{ is primitive in }F_r\}
 \subseteq\Omega_r^*.
\]
Invalid encodings are rejected.

As usual, $\mathsf{NP}$ denotes the set of languages decidable in nondeterministic polynomial time, and $\mathsf P$ denotes the set of languages decidable in deterministic polynomial time.

Our main result, restated as Theorem~\ref{t:main1}, is:

\begin{thm}\label{t:main}
For every fixed integer $r\ge 2$, the language $\CPrim_r$ belongs to
$\mathsf{NP}$. Equivalently, the compressed primitivity problem in $F_r$ is
decidable in nondeterministic polynomial time.
\end{thm}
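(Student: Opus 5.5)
The approach is to guess a complement to $g_{\CA}$ and check it with a polynomial-time compressed subgroup-membership test. The certificate for $\enc_r(\CA)\in\CPrim_r$ consists of $\SLP$s $\CB_2,\dots,\CB_r$ over $\Sigma_r$ of size polynomial in $\lVert\CA\rVert$. They are meant to represent elements $y_2,\dots,y_r$ such that $\{g_{\CA},y_2,\dots,y_r\}$ is a free basis of $F_r$.

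The verifier works as follows. It first checks in polynomial time, using Schleimer's compressed free reduction, that $g_{\CA}\ne 1$. It then uses Linton's fully compressed subgroup-membership algorithm, which is polynomial time because the number $r$ of compressed generators is fixed. For each $i=1,\dots,r$ it decides whether $x_i\in H=\langle g_{\CA},y_2,\dots,y_r\rangle$. If all $r$ tests succeed, then $H=F_r$. An $r$-element generating set of $F_r$ is a free basis, since $F_r$ is Hopfian. Hence $g_{\CA}$ is primitive. Conversely, if the verifier accepts on some certificate, then $g_{\CA}$ is primitive. The input is measured by $\beta(\CA)$, and Proposition~\ref{p:slp-size-comparison} makes this polynomially equivalent to $\lVert\CA\rVert$. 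So this is a genuine $\mathsf{NP}$ procedure, provided the existence statement below holds.

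The substantive part is the existence statement. If $g_{\CA}$ is primitive, some complement $y_2,\dots,y_r$ has $\SLP$s of size $\mathrm{poly}(\lVert\CA\rVert)$. I would obtain it from an automorphism $\phi$ with $\phi(x_1)=g_{\CA}$ that is written compactly, so that $y_j=\phi(x_j)$.

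\begin{enumerate}[label=(\arabic*)]
\item Reduce to the cyclically reduced case. A cyclic reduction $g_{\CA}=u\,c\,u^{-1}$ is computable in compressed form in polynomial time, and conjugation by $u$ only adds $O(\lVert\CA\rVert)$ to the grammar size.
\item Run Whitehead minimization on $c$. Since $c$ is primitive, Whitehead's peak-reduction lemma gives a Whitehead automorphism that strictly shortens $c$ until $c$ becomes a letter $x_i^{\pm1}$.
\item Deal with the number of steps. Because $|c|$ may be about $2^{\lVert\CA\rVert}$, the number of steps may be exponential. I would therefore group consecutive steps into blocks $\tau^{k}$, where $\tau$ is a fixed Whitehead (or Nielsen) automorphism, chosen from a finite list depending only on $r$, and the exponent $k$ is recorded in binary.
\item Bound the cost of one block. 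Applying $\tau^{k}$ to an $\SLP$ costs only $O(\log k)$ extra nonterminals, by repeated squaring of the substitution grammar. Polynomially many blocks therefore yield an $\SLP$ for each $\phi(x_j)$ of polynomial size.
\end{enumerate}

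The main obstacle I expect is showing that polynomially many blocks suffice. My plan is a Euclidean-algorithm style argument. I would show that after any maximal block of identical Whitehead moves, some complexity measure drops by a constant factor. A first candidate is the cyclic length of $c$. Failing that, I would use a weighted count of two-letter subwords in the Whitehead graph of $c$. Dropping by a constant factor each block would give $O(\lVert\CA\rVert)$ blocks, since $\log|c|=O(\lVert\CA\rVert)$.

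In rank $2$ this mirrors continued fractions on the abelianization. In higher rank I would first try to show the following two facts, working with the cyclic word read off the compressed form.
\begin{itemize}
\item A non-halving Whitehead reduction forces a long periodic segment in $c$.
\item Such a segment is consumed entirely by a single power $\tau^k$.
\end{itemize}
If this measure argument resists, a fallback is to let the certificate also contain the intermediate compressed words $c=c_0,c_1,\dots,c_N=x_i^{\pm1}$ together with the block data. The verifier would check each step $c_{t+1}=\tau_t^{k_t}(c_t)$ by compressed equality. Even then, the polynomial bound on $N$ is still the core combinatorial lemma to be proved.
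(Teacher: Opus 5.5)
\textbf{There is a genuine gap.} Your verification half is sound and is essentially the paper's. For fixed $r$, Linton's algorithm decides each membership $x_i\in\langle g_{\CA},y_2,\dots,y_r\rangle$ in polynomial time, and Hopficity turns a generating $r$-tuple into a basis. The gap is the existence half, which carries the whole theorem. You must show that every primitive $g_{\CA}$ has a complement whose $\SLP$s have size polynomial in $\lVert\CA\rVert$. Your argument for this rests on a lemma you leave unproved: that some Whitehead descent from $c$ can be cut into polynomially many blocks $\tau^{k}$, each shrinking a complexity measure by a constant factor. Nothing in the proposal supports this for $r\ge3$. Elementary descent can be exponentially long (e.g.\ from $ab^{2^n}$). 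The paper's rank-two batching relies on Nielsen--Osborne--Zieschang and the Euclidean algorithm on $\operatorname{ab}(g)$, and the paper stresses that this is special rank-two combinatorics with no analogous accelerated descent available for $r\ge3$; compressed Whitehead minimization is left open. Your two heuristics (a non-halving move forces a long periodic segment; a single power $\tau^k$ consumes it) are unsupported. Your fallback certificate listing the intermediate words $c_t$ needs the same polynomial bound on the number of blocks. Separately, ``repeated squaring of the substitution grammar'' does not give $O(\log k)$ for a general fixed $\tau$: an exponentially growing $\tau$ forces size $\Omega(k)$. This is harmless here only because $\tau_{(A,a)}^k$ is $\tau_{(A,a)}$ with $a$ replaced by $a^k$.

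\textbf{Missing idea.} Bypass Whitehead descent and read the complement off Puder's immediate-quotient criterion.
\begin{enumerate}
\item Cyclically reduce to $w=y_1\cdots y_N$ with support $I$ and $s=|I|$.
\item Since $\langle w\rangle\le_{X_I}F_I$, Puder's theorem together with Lemma~\ref{l:support} says that $w$ is primitive if and only if there is a chain $S_w=\Gamma_0\immq\Gamma_1\immq\cdots\immq\Gamma_{s-1}=R_{X_I}$.
\item Each step identifies two vertices of $\Gamma_{i-1}$. Because $S_w\to\Gamma_{i-1}$ is onto vertices, they lift to vertices $v_{p_i},v_{q_i}$ of the original cycle. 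By Lemma~\ref{l:algebraic-identification}, the step adjoins the prefix difference $a_{p_i}a_{q_i}^{-1}$, where $a_t=w[0:t]$.
\item Hence $F_I=\langle w,a_{p_1}a_{q_1}^{-1},\dots,a_{p_{s-1}}a_{q_{s-1}}^{-1}\rangle$, which is a basis of $F_I$ by Hopficity.
\end{enumerate}
The certificate is therefore only $2(s-1)$ positions of $O(\lVert\CA\rVert)$ bits. The prefix differences get polynomial-size $\SLP$s from Lemma~\ref{l:slp-ops}(b),(c). Your Linton-based check then goes through verbatim, either over $F_I$, or over $F_r$ after appending the letters $x_j$ with $j\notin I$ and conjugating back. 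No bound on the length of any descent is ever needed.
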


For $F_2=F(a,b)$, Theorem~\ref{t:rank-two-main} gives the stronger conclusion:

\begin{thm}\label{t:rank-two-intro}
The compressed primitivity problem for $F_2=F(a,b)$ is decidable in
deterministic polynomial time. Equivalently, $\CPrim_2\in\mathsf P$.
\end{thm}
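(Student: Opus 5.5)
We plan to use the classical characterization of primitive elements of $F_2=F(a,b)$ via abelianization and cyclic words. Conjugation preserves primitivity, so we may work with the cyclically reduced conjugate $u$ of $g_{\CA}$. The standard facts we rely on (Cohen--Metzler--Zimmermann, Osborne--Zieschang) are:
\begin{itemize}
\item[(i)] a cyclically reduced primitive element of $F_2$ involves, up to inverting generators, only one sign of each letter, so it has no occurrences of both $a$ and $a^{-1}$, nor of both $b$ and $b^{-1}$;
\item[(ii)] for each coprime pair $(p,q)$ there is, up to cyclic permutation and inversion, exactly one cyclically reduced primitive word with abelianization $(p,q)$.
\end{itemize}
This unique word is the Christoffel-type word: the exponents of the minority letter occur with a specific pattern, namely consecutive block lengths of the other letter take two adjacent values determined by $\lfloor q/p\rfloor$, arranged in a Sturmian pattern. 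The algorithm is then as follows.

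First, we compute in polynomial time an $\SLP$ for the free reduction and then the cyclic reduction of $w_{\CA}$. This is Schleimer's compressed free reduction together with Plandowski/Lohrey pattern-matching to strip the maximal mutually inverse prefix/suffix. We assume these are available from the paper's earlier preliminaries. Next, we compute the abelianization $(p,q)\in\Z^2$ bottom-up over the grammar. Integers stay polynomial in bit length, and we test $\gcd(p,q)=1$ by the Euclidean algorithm in polynomial time. We also check condition (i) by computing letter-occurrence flags bottom-up. After replacing $a\mapsto a^{-1}$ and/or $b\mapsto b^{-1}$ where needed, we may assume $u$ is a positive word with $p,q>0$. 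The cases $p=0$ or $q=0$ are trivial: $u$ must equal $a^{\pm1}$ or $b^{\pm1}$.

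The key step is to produce, in polynomial time, an $\SLP$ $\CB$ for the canonical primitive (Christoffel) word $c_{p,q}$ with abelianization $(p,q)$. We build it by running the Euclidean algorithm on $(p,q)$ and applying the corresponding positive Nielsen automorphisms $a\mapsto a,\ b\mapsto a^kb$ (or the symmetric version). Each quotient $k$ is exponentiated via repeated squaring, which contributes $O(\log k)$ nonterminals. The total size is therefore $O(\log p+\log q)$, which is polynomial in $\beta(\CA)$, and the composite yields $c_{p,q}=\phi(a)$ or the appropriate basis image in compressed form.

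By (ii), $g_{\CA}$ is primitive iff $u$ is a cyclic permutation of $c_{p,q}$ or of its reverse-inverse normalized version. Since $u$ is positive, only cyclic permutations of $c_{p,q}$ need to be considered. This holds iff $|u|=|c_{p,q}|$ and $u$ occurs as a factor of $c_{p,q}c_{p,q}$. This is an instance of compressed pattern matching between two $\SLP$s, and it is decidable in deterministic polynomial time by Jeż's or Lohrey's fully compressed pattern-matching algorithm.

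The main obstacle is to justify (ii) precisely in the normalized positive setting, i.e. that the Nielsen/Euclid construction hits the unique cyclic class. We would cite Osborne--Zieschang, or prove it from the Christoffel-word characterization. We must also ensure the sign normalization commutes correctly with inversion of the whole word.
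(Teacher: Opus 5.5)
Your proposal is correct but takes a different route from the paper. Both arguments rest on Proposition~\ref{p:oz-classification}(c): primitive elements of $F_2$ with equal abelianization are conjugate. Both also run the Euclidean algorithm on $(|p|,|q|)$ with shears whose exponents are compressed by repeated squaring.

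The difference is where the automorphisms act. The paper applies them to the input grammar itself (Lemma~\ref{l:euclidean-automorphism}), driving the abelianization of $g_{\CA}$ to $(1,0)$. It then needs only one more compressed cyclic reduction and the test $\lVert\Phi(g_{\CA})\rVert_{\rm cyc}=1$. Nothing beyond Proposition~\ref{p:cyclic-reduction} and elementary grammar substitutions is used, and on acceptance $\Phi$ is an explicit automorphism carrying $g_{\CA}$ to a conjugate of $a$.

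You instead build a canonical positive primitive word $c_{p,q}$ from the integers alone, leaving the input untouched. You then reduce to the compressed conjugacy problem in $F_2$ via fully compressed pattern matching of $u$ in $c_{p,q}c_{p,q}$. This imports an extra black box (Plandowski, Karpinski--Rytter--Shinohara, Lifshits, Je\.{z}), but it separates the arithmetic cleanly from the input. If you build $c_{p,q}$ by composing on the right, defining nonterminals for $\Phi_i(a),\Phi_i(b)$ from those for $\Phi_{i-1}(a),\Phi_{i-1}(b)$, it has size $O(\log(p+q))$. That bound uses $\sum_i\log(k_i+1)=O(\log(p+q))$ for the partial quotients (compare Remark~\ref{r:rank-two-complexity}).

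The obstacle you flag at the end is not real. $c_{p,q}=\phi(x)$ is primitive by construction, and being positive it is cyclically reduced. So Proposition~\ref{p:oz-classification}(c), together with the fact that conjugate cyclically reduced words are cyclic permutations of each other, gives your test at once. Sign normalization is itself an automorphism, so it needs no compatibility check.

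Two smaller points. Your check (i) is redundant, since a non-positive normalized $u$ cannot occur in the positive word $c_{p,q}c_{p,q}$. In (ii), ``and inversion'' should be dropped: with $(p,q)$ fixed, the cyclically reduced primitive words are exactly the cyclic permutations of one word.
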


The proof of Theorem~\ref{t:main} combines three ingredients: Schleimer's
compressed cyclic reduction~\cite{Schleimer}, Puder's immediate-quotient
criterion~\cite{Puder,PuderParzanchevski}, and Linton's subgroup-membership
algorithm for a fixed number of compressed generators~\cite{Linton}. In
particular, Theorem~\ref{t:prefix-certificate} sharpens Puder's primitivity
criterion: instead of a sequence of immediate quotients, it specifies pairs of
vertices in the original cycle $S_w$ labelled by a cyclically reduced word $w$.

We also prove that, for fixed $r$, Whitehead minimality---equivalently,
automorphic minimality---of a compressed conjugacy class is decidable in
deterministic polynomial time (Theorem~\ref{t:compressed-whitehead-minimality}).
The algorithm computes the weighted Whitehead graph directly from the grammar.

After the preliminaries in Section~\ref{s:slp}, Sections~\ref{s:core}--\ref{s:verification}
prove Theorem~\ref{t:main}. Section~\ref{s:whitehead-minimization} is
independent of that proof, and Section~\ref{s:rank-two} treats rank two.

We also record a quantitative layered-substitution result, essentially due to Schleimer~\cite{Schleimer}. If both an $\SLP$
$\CA$ and a length-$m$ sequence from a fixed finite family of endomorphisms
are inputs, Proposition~\ref{p:iterated-endomorphisms} constructs an $\SLP$
$\CB$ for the iterated image satisfying
\begin{equation}\label{e:chain}
 \begin{aligned}
 \lVert\CB\rVert
   &=O\!\left(\lVert\CA\rVert+m+1\right),\\
 \beta(\CB)
   &=O\!\left((\lVert\CA\rVert+m+1)
          \log(\lVert\CA\rVert+m+2)\right).
 \end{aligned}
\end{equation}
The latter bound also controls the bit operations needed to write the
canonical code of $\CB$.
This is the compression mechanism underlying the polynomial-time treatment
of automorphism words by Lohrey--Schleimer~\cite{LohreySchleimer} and
Schleimer~\cite{Schleimer}: a prescribed sequence of Nielsen automorphisms
can be evaluated without expanding any of the intermediate words.

The bound in \eqref{e:chain} is polynomial in the supplied sequence length
$m$, but does not make classical Whitehead minimization polynomial on
compressed input. The cyclic length of $w_{\CA}$ may be exponential in
$\lVert\CA\rVert$, and Whitehead's theorem guarantees only strict decrease.
There is no constant $0<\delta<1$, depending only on $r$, such that every
nonminimal class $[w]$ has a reducing Whitehead move $\tau$ with
\[
 \lVert\tau([w])\rVert_{X_r}
 \le (1-\delta)\lVert[w]\rVert_{X_r}.
\]
Thus a strictly decreasing elementary Whitehead descent may require
exponentially many stages. The minimality test recognizes a minimal class, and
layered substitution evaluates a supplied sequence, but neither bounds the
number of descent stages.

The following simple rank-two family makes the obstruction visible.

\begin{exmp} Let $n\ge1$, and
consider the $\SLP$
\[
 B_0\to b,\qquad
 B_i\to B_{i-1}B_{i-1}\quad(1\le i\le n),
 \qquad C\to aB_n.
\]
After an order-preserving relabeling of its nonterminals, this is an $\SLP$
with size $O(n)$ and encoded length $O(n\log(n+2))$ whose output
is
\[
 w_n=ab^{2^n}.
\]
Let $\tau\in\Aut(F(a,b))$ be the Whitehead automorphism
\[
 \tau(a)=ab^{-1},\qquad \tau(b)=b.
\]
For $N=2^n$ one has
\[
 \tau^k(ab^N)=ab^{N-k}\qquad(0\le k\le N).
\]
For $N\ge2$, the only nonzero edges of the weighted Whitehead graph of
$ab^N$ are
\[
 \{a,b^{-1}\},\qquad \{a^{-1},b\},\qquad \{b,b^{-1}\},
\]
with respective weights $1,1,N-1$.  Inspecting the characteristic pairs in
Whitehead's length-change formula, the only pairs with negative length change
are
\[
 (\{a^{-1},b\},b)
 \quad\text{and}\quad
 (\{a,b^{-1}\},b^{-1}).
\]
Each pair has length change $-1$, and its image cyclically reduces to
$[ab^{N-1}]$. For $N=1$, one further reducing move reaches a one-letter
minimal class. Thus every strictly decreasing Whitehead descent from
$[ab^{2^n}]$ has $2^n$ stages, exponential in the grammar depth and
superpolynomial in both input size measures.
\end{exmp}

The rank-two primitivity algorithm of Theorem~\ref{t:rank-two-intro}
overcomes this difficulty by batching such chains of elementary moves. In the
preceding example, the single shear
\[
 \tau_N(a)=ab^{-N},\qquad \tau_N(b)=b
\]
is the composite $\tau^N$ and sends $ab^N$ directly to $a$; under the
size convention of Definition~\ref{d:slp}, the power $b^N$ has an $\SLP$
of size $O(\log(N+1))$ and encoded length
$O(\log(N+1)\log\log(N+2))$. More generally, the
Nielsen--Osborne--Zieschang classification of primitive elements in
$F_2$~\cite{OsborneZieschang} says that a primitive conjugacy class is
uniquely determined by its coprime exponent-sum pair and supplies a
Christoffel representative. (A related, somewhat less precise structural
description of bases in $F_2$ was obtained by
Cohen--Metzler--Zimmermann~\cite{CohenMetzlerZimmermann}.) If
$Q=mP+R$, the automorphism
\[
 \tau_m(a)=ab^{-m},\qquad \tau_m(b)=b
\]
represents $m$ elementary shears at once and changes the cyclic length from
$P+Q$ to $P+R$, where
\[
 P+R<\frac{2}{3}(P+Q).
\]
Hence there are only logarithmically many batched stages, even when the
Euclidean quotients are exponentially large. The shear powers have
polynomial-size compressed descriptions, and a final cyclic-length test
decides primitivity. Section~\ref{s:rank-two} gives the proof. This uniform
shortening uses special rank-two combinatorics; layered substitution alone
does not yield an analogous accelerated descent for $r\ge3$.

Remark~\ref{r:rank-two-complexity} shows that the Euclidean part adds at
most a cubic preprocessing term and, apart from that term, preserves, up to
logarithmic factors, the polynomial degree of compressed cyclic reduction.
Schleimer's proof gives an absolute polynomial degree but does not state or
optimize it.

For Theorem~\ref{t:main}, Linton's bound is $M^{O(k)}$ for $k\le r$
compressed generators. Thus the verifier has a bound $M^{O(r)}$, with the
hidden constant inherited from Linton's analysis; we do not extract an
explicit degree in $r$.

For fixed $r\ge3$, applying the standard Whitehead algorithm to the
uncompressed word gives a deterministic exponential-time algorithm.

The results of this paper naturally raise the following questions:

\begin{prob}\label{p:CPrim_r}
Let $r\ge 3$ be a fixed integer. Does $\CPrim_r$ belong to $\mathsf P$? As an a priori easier question, does $\CPrim_r$ belong to $\mathsf{coNP}$?
\end{prob}
Theorem~\ref{t:main} gives $\CPrim_r\in\mathsf{NP}$. Membership in
$\mathsf{coNP}$ would strengthen this, but is weaker than membership in
$\mathsf P$.

Roig, Ventura, and Weil~\cite{RoigVenturaWeil} solve Whitehead
minimization for uncompressed words in polynomial time, even when $r$ is part
of the input. For fixed $r$, the compressed version asks, given an $\SLP$
$\CA$, for an $\SLP$ $\CB$ producing a freely and cyclically reduced word
minimal in the automorphic orbit of $w_{\CA}$. Decompression followed by
Whitehead minimization gives a deterministic exponential-time solution; its
optimality is unknown:

\begin{prob}\label{p:compressed-minimization}
Let $r\ge 2$ be a fixed integer. What can be said about the complexity of the Whitehead minimization problem for compressed words in $F_r$?
\end{prob}

To the author's knowledge, Problem~\ref{p:compressed-minimization}
remains open even for $F_2$. Our deterministic polynomial time primitivity algorithm in Theorem~\ref{t:rank-two-intro} uses combinatorics
specific to primitive elements in $F_2$ and does not extend to other
$\Aut(F_2)$-orbits.

\section{Straight-line programs and compressed words in free groups}\label{s:slp}

For a word $w\in Y^*$, write $|w|$ for its number of letters. Throughout
this section, $r$ and $\Sigma_r=X_r\sqcup X_r^{-1}$ are fixed and are not
part of the input. We first recall free-group conventions and then specify an
$\SLP$ encoding.

For a word $u=z_1\cdots z_N\in\Sigma_r^*$, put
\[
 u^{-1}=z_N^{-1}\cdots z_1^{-1}.
\]
The natural monoid map $\Sigma_r^*\to F_r$ sends a terminal word to the group element that it represents. A word is \emph{freely reduced} if it has no factor $zz^{-1}$ with $z\in\Sigma_r$. Every word $u$ has a unique freely reduced form, denoted $\red(u)$, representing the same element of $F_r$. A freely reduced word is \emph{cyclically reduced} if it is empty or if its first and last letters are not mutually inverse. Starting with $\red(u)$ and repeatedly deleting mutually inverse first and last letters gives a uniquely determined factor $\cycred(u)$ and a freely reduced word $c$ such that, as a literal word,
\[
 \red(u)=c\,\cycred(u)\,c^{-1}.
\]
We call $\cycred(u)$ the cyclically reduced form of $u$ and write
\[
 \lVert u\rVert_{\rm cyc}=|\cycred(u)|
\]
for the cyclic length of the represented conjugacy class. The empty word is denoted by $\eps$.

\begin{defn}[straight-line programs and their size]\label{d:slp}
An \emph{straight-line program}, abbreviated $\SLP$, over $\Sigma_r$ is a tuple
\[
 \CA=(\Sigma_r,\mathcal V,A_m,\mathcal P),
 \qquad \mathcal V=\{A_1,\dots,A_m\},
\]
where $m\ge1$, $A_m$ is the root nonterminal, and
\[
 \mathcal P=\{A_i\longrightarrow W_i:1\le i\le m\}
\]
contains exactly one production for each nonterminal, with
\[
 W_i\in\bigl(\Sigma_r\sqcup\{A_1,\dots,A_{i-1}\}\bigr)^*.
\]
The empty right-hand side is allowed. The index condition makes the dependency graph acyclic.

For each nonterminal, define its terminal expansion recursively by substituting the already defined expansions of the earlier nonterminals. We write $w_{A_i}\in\Sigma_r^*$ for the word produced by $A_i$ and
\[
 w_{\CA}:=w_{A_m}
\]
for the uncompressed terminal word produced by the program. Equivalently, $w_{\CA}=\val(\CA)$ in the notation used elsewhere in the paper.

The \emph{size} of $\CA$ is the structural grammar size
\begin{equation}\label{eq:slp-size}
 \boxed{\displaystyle
 \lVert\CA\rVert
 =m+\sum_{i=1}^m |W_i|.}
\end{equation}
Thus every nonterminal and right-hand-side symbol has unit cost.
\end{defn}

For a nonnegative integer $t$, let
\[
 \blen(t)=\max\bigl\{1,\lceil\log_2(t+1)\rceil\bigr\},
\]
the number of bits in the ordinary binary representation, with $0$ assigned
one bit.

\begin{defn}[canonical string encoding and associated languages]\label{d:slp-encoding}
Let $\operatorname{bin}(t)\in\{\mathtt{0},\mathtt{1}\}^*$ be the ordinary binary expansion of a nonnegative integer $t$, with no leading zeroes and with $\operatorname{bin}(0)=\mathtt{0}$. Fix the finite encoding alphabet
\[
 \Omega_r=
 \Sigma_r\sqcup
 \{\mathtt{0},\mathtt{1},\mathtt{N},\mathtt{L},\mathtt{R},
   \mathtt{B},\mathtt{P},\mathtt{Q},\mathtt{E}\},
\]
where the displayed meta-symbols are all distinct from one another and from the terminal letters in $\Sigma_r$. The letters $\mathtt{L},\mathtt{R}$ delimit the whole program, $\mathtt{B}$ terminates the initial binary encoding of the number of nonterminals, $\mathtt{P}$ separates the left- and right-hand sides of a production, $\mathtt{Q}$ terminates a nonterminal reference on a right-hand side, and $\mathtt{E}$ terminates a production.

For a right-hand-side symbol $C$, define
\[
 \operatorname{code}(z)=z\quad(z\in\Sigma_r),
 \qquad
 \operatorname{code}(A_j)
 =\mathtt{N}\,\operatorname{bin}(j)\,\mathtt{Q}.
\]
If $W=C_1\cdots C_t$, let
$\operatorname{code}(W)=\operatorname{code}(C_1)\cdots
\operatorname{code}(C_t)$, with the empty word used when $t=0$. Encode the production $A_i\to W_i$ by
\[
 \operatorname{pcode}(A_i\to W_i)
 =\mathtt{N}\,\operatorname{bin}(i)\,\mathtt{P}\,
   \operatorname{code}(W_i)\,\mathtt{E},
\]
and encode the whole program by the word
\begin{equation}\label{eq:slp-string-code}
 \enc_r(\CA)=
 \mathtt{L}\,\operatorname{bin}(m)\,\mathtt{B}\,
 \operatorname{pcode}(A_1\to W_1)\cdots
 \operatorname{pcode}(A_m\to W_m)\,\mathtt{R}
 \in\Omega_r^*.
\end{equation}
Put
\[
 \beta(\CA)=|\enc_r(\CA)|.
\]
The delimiters make this code uniquely decodable. In particular, from a string one can parse $m$, the productions in their prescribed order, and every terminal or nonterminal occurrence, and then check the conditions $j<i$ on the right-hand side of the $i$th production. Thus validity of an $\SLP$ code can be checked in deterministic time polynomial in $\beta(\CA)$.

The set
\[
 \SLP_r=
 \{\enc_r(\CA):\CA\text{ is an $\SLP$ over }\Sigma_r\}
 \subseteq\Omega_r^*
\]
is a formal language, and every collection of $\SLP$s over $\Sigma_r$ is
identified with a sublanguage of $\SLP_r$. In particular, $\CPrim_r$ is a
language over the fixed alphabet $\Omega_r$.
\end{defn}

\begin{prop}[structural size and encoded length]
\label{p:slp-size-comparison}
For every $\SLP$ $\CA$ over $\Sigma_r$,
\begin{equation}\label{eq:structural-encoding-comparison}
 \lVert\CA\rVert
 \le \beta(\CA)
 \le
 (\lVert\CA\rVert+1)
 \bigl(\blen(\lVert\CA\rVert)+3\bigr).
\end{equation}
In particular,
\[
 \beta(\CA)
 =O\!\left(\lVert\CA\rVert
       \log(\lVert\CA\rVert+2)\right),
\]
with a constant independent of $\CA$.  
\end{prop}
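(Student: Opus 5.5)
The plan is to count the letters of $\enc_r(\CA)$ exactly and compare them with $\lVert\CA\rVert=m+\sum_i|W_i|$. Write $k_i$ for the number of terminal occurrences and $n_i$ for the number of nonterminal occurrences in $W_i$, so that $|W_i|=k_i+n_i$. From \eqref{eq:slp-string-code}, the definition of $\operatorname{pcode}$ and the definition of $\operatorname{code}$ we read off
\[
 \beta(\CA)=3+|\operatorname{bin}(m)|+\sum_{i=1}^m\Bigl(3+|\operatorname{bin}(i)|+k_i+\sum_{C\text{ occurring in }W_i,\ C=A_j}\bigl(2+|\operatorname{bin}(j)|\bigr)\Bigr).
\]
The terms $3$ come from $\mathtt{L},\mathtt{B},\mathtt{R}$ and from $\mathtt{N},\mathtt{P},\mathtt{E}$. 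We also use $|\operatorname{bin}(t)|=\blen(t)$.

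For the lower bound, each production contributes at least $1$ for the nonterminal $A_i$, since it contains $\mathtt{N}$. Each right-hand-side symbol contributes at least $1$: a terminal contributes itself, and a nonterminal reference contributes at least $\mathtt{N}$. Summing over productions gives $\beta(\CA)\ge m+\sum_i|W_i|=\lVert\CA\rVert$.

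For the upper bound, all indices satisfy $i,j\le m\le\lVert\CA\rVert$, and $\blen$ is nondecreasing, so every binary field has length at most $b:=\blen(\lVert\CA\rVert)$. The global overhead is therefore at most $3+b$. Each production contributes at most $3+b$ for its left-hand part. Each right-hand-side symbol contributes at most $2+b$, which we bound by $b+3$. Summing,
\[
 \beta(\CA)\le (b+3)+m(b+3)+\sum_i|W_i|(b+3)=(\lVert\CA\rVert+1)(b+3),
\]
which is \eqref{eq:structural-encoding-comparison}. The asymptotic form follows from $\blen(t)\le\log_2(t+1)+1$ and $\lVert\CA\rVert\ge1$. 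Together these give $(\lVert\CA\rVert+1)(b+3)\le 2\lVert\CA\rVert\,(\log_2(\lVert\CA\rVert+2)+4)$, and the right-hand side is $O(\lVert\CA\rVert\log(\lVert\CA\rVert+2))$ because $\log(\lVert\CA\rVert+2)\ge\log 3>0$.

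There is no genuine obstacle here. The only care needed is in the bookkeeping: make sure the constant overhead ($\mathtt{L},\mathtt{B},\mathtt{R}$ and $\operatorname{bin}(m)$) is absorbed into the extra "$+1$" factor, and use monotonicity of $\blen$ together with $m\le\lVert\CA\rVert$ to bound every index uniformly by $b$.
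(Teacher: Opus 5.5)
Your proof is correct and follows essentially the same route as the paper. The lower bound counts one symbol per production and per right-hand-side symbol, and the upper bound charges at most $\blen(\lVert\CA\rVert)+3$ to the global overhead, to each production header, and to each right-hand-side symbol. The paper charges the sharper $\blen(\lVert\CA\rVert)+2$ per right-hand-side symbol before relaxing to the same final estimate.
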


\begin{proof}
Put $s=\lVert\CA\rVert$ and let $m$ be the number of nonterminals.  Every
production contributes at least one symbol to its code, and the code of every
right-hand-side symbol has positive length.  Since $s$ is the number of
nonterminals plus the number of right-hand-side symbols, this gives
$s\le\beta(\CA)$.

For the upper bound, every nonterminal index is at most $m\le s$ and hence
has at most $\blen(s)$ bits.  The three outer delimiters together with the
initial binary encoding of $m$ contribute at most $3+\blen(s)$.  Each of the
$m$ productions contributes at most $\blen(s)+3$ symbols apart from its
right-hand side, and the code of each right-hand-side symbol has length at
most $\blen(s)+2$.  Since there are $s-m$ right-hand-side symbols,
\[
 \begin{aligned}
 \beta(\CA)
 &\le 3+\blen(s)+m(\blen(s)+3)
       +(s-m)(\blen(s)+2)\\
 &\le (s+1)(\blen(s)+3).
 \end{aligned}
\]
This proves~\eqref{eq:structural-encoding-comparison}.  The asymptotic
statement follows from $\blen(s)=O(\log(s+2))$.
\end{proof}

All algorithms below take the canonical code as input. By
Proposition~\ref{p:slp-size-comparison}, polynomial running-time, output-size,
and certificate-size bounds may use either size measure. Explicit
bit-operation bounds refer to encoded length and may retain its logarithmic
factor. Since $\Omega_r$ is fixed, conversion to a binary alphabet changes
length only by a constant factor.

An $\SLP$ is in \emph{Chomsky normal form} if every production has one of the forms
\[
 A_i\longrightarrow z\quad(z\in\Sigma_r),
 \qquad
 A_i\longrightarrow A_jA_k\quad(j,k<i),
\]
with the additional exceptional rule $A_m\to\eps$ permitted when the
output word is empty. First replace every terminal occurring in a right-hand
side of length greater than one by a shared terminal nonterminal. Since
$\Sigma_r$ is fixed, this adds only $O_r(1)$ new productions and
right-hand-side symbols, while leaving the lengths of the original right-hand
sides unchanged. Long right-hand sides can then be binarized by introducing
binary trees of dummy nonterminals. Empty references can be deleted recursively, and unit
productions can be eliminated by redirecting references in the acyclic
dependency order, without copying terminal expansions. If the root expansion
is empty, replace the program by the one-rule program $A_1\to\eps$; otherwise,
discard all empty and root-unreachable nonterminals. Relabel the remaining
variables in dependency order. Thus the total number of nonterminals and
right-hand-side symbol occurrences grows by at most a linear factor. Consequently every $\SLP$ of size
$n$ can be converted in polynomial time to an equivalent $\SLP$ in Chomsky
normal form of size $O(n)$.  This is the normal form used by
Schleimer~\cite{Schleimer}.  An $\SLP$ with $m$
nonterminals in this form has size $O(m)$ and encoded length
$O(m\log(m+2))$ by Proposition~\ref{p:slp-size-comparison}.

For example, the productions
\[
 A_1\to x_1,
 \qquad A_i\to A_{i-1}A_{i-1}\quad(2\le i\le m)
\]
produce $w_{A_m}=x_1^{2^{m-1}}$ with size $O(m)$ and encoded length
$O(m\log(m+2))$.

A \emph{composition system} is defined similarly, except that a right-hand side may contain a truncated nonterminal $B[i:j]$, representing the factor of $w_B$ beginning at position $i$ and ending immediately before position $j$. The indices satisfy $0\le i\le j\le |w_B|$, are written in binary, and their binary lengths are included in the input size. Hagenah's conversion algorithm, presented as Theorem~7.1 in~\cite{Schleimer}, converts a composition system to an equivalent $\SLP$ in polynomial time.

We use positions numbered from $0$. Thus, for a word $u=u_1\cdots u_N$ and $0\le i\le j\le N$, the factor
\[
 u[i:j]=u_{i+1}\cdots u_j
\]
has length $j-i$. In particular, $u[0:t]$ is the prefix of length $t$, and $u[0:0]=\eps$.

\begin{lem}[standard compressed-word operations]\label{l:slp-ops}
Let $\CA$ be an $\SLP$ of size $n$ over the fixed alphabet $\Sigma_r$.
\begin{enumerate}
\item One can convert $\CA$ to Chomsky normal form in polynomial time. The lengths $|w_{A_i}|$ of all nonterminal expansions, and in particular $|w_{\CA}|$, can be computed as binary integers in polynomial time. Moreover,
\[
 |w_{\CA}|\le 2^{O(n)},
\]
so every such length has $O(n)$ bits.
\item Given binary integers $i,j$ with $0\le i\le j\le |w_{\CA}|$, one can construct, in time polynomial in $n+\blen(i)+\blen(j)$, an $\SLP$ producing the factor $w_{\CA}[i:j]$.
\item Given $\SLP$s for words $u$ and $v$, one can construct in polynomial time $\SLP$s for $uv$ and $u^{-1}$. In particular, one can construct an $\SLP$ for $w_{\CA}^{-1}$.
\item The support
\[
 \xsupp_{X_r}(w_{\CA})
 =\{j:x_j^{\pm1}\text{ occurs in }w_{\CA}\}
\]
and the exponent-sum vector
\[
 \bigl(\sigma_{x_1}(w_{\CA}),\dots,
       \sigma_{x_r}(w_{\CA})\bigr)\in\mathbb Z^r
\]
can be computed in polynomial time.
\end{enumerate}
\end{lem}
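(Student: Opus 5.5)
The plan is to prove the four items in order, each by dynamic programming over the nonterminals in their dependency order, keeping every computed integer in binary.

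For (1) I use the Chomsky normal form conversion already described before the lemma. It introduces $O_r(1)$ shared terminal nonterminals, binarizes long right-hand sides, removes empty references and unit productions, and relabels. Each step is a single pass over the grammar, so it runs in polynomial time and yields size $O(n)$. Lengths are computed directly on the original grammar. If $W_i=C_1\cdots C_t$, then
\[
 |w_{A_i}|=\sum_{s=1}^t \ell(C_s),
\]
where $\ell(z)=1$ for a terminal $z$ and $\ell(A_j)=|w_{A_j}|$. For the bound, let $L_i=\max_{j\le i}|w_{A_j}|$. Induction gives $L_i\le \max(L_{i-1},|W_i|\max(1,L_{i-1}))$, and therefore $|w_{A_i}|\le\prod_{j\le i}\max(1,|W_j|)\le 2^{\sum_j|W_j|}\le 2^n$. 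Every length thus has $O(n)$ bits, and the $O(n)$ additions cost polynomial time.

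For (2) I work in Chomsky normal form and build a composition-system-free $\SLP$ directly. For each nonterminal $A_i\to A_jA_k$ on the path determined by a cut position $p$, I introduce new nonterminals $\mathrm{Pre}_i$ for the prefix $w_{A_i}[0:p_i]$ and $\mathrm{Suf}_i$ for the corresponding suffix. If $p_i\le|w_{A_j}|$, put $\mathrm{Pre}_i\to\mathrm{Pre}_j$; otherwise put $\mathrm{Pre}_i\to A_j\mathrm{Pre}_k$ with $p_k=p_i-|w_{A_j}|$. The descent path from the root has length at most the depth $\le m$, so only $O(n)$ new rules arise. I do this once for position $j$ to obtain the prefix of length $j$, and then apply the symmetric suffix construction inside that $\SLP$ at position $i$. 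The index arithmetic is subtraction on $O(n+\blen(i)+\blen(j))$-bit numbers. Alternatively I could cite Hagenah's conversion (Theorem~7.1 in~\cite{Schleimer}) applied to the one-rule composition system $C\to A_m[i:j]$. I would mention both and give the direct construction as the proof.

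For (3), the concatenation $uv$ comes from taking the disjoint union of the two grammars with the indices of the second shifted, plus a new root $C\to U V$. For the inverse I introduce a nonterminal $A_i'$ for each $A_i$ with $A_i'\to \overline{W_i}$. Here $\overline{W_i}$ reverses $W_i=C_1\cdots C_t$ to $C_t'\cdots C_1'$, where $z'=z^{-1}$ for terminals and $A_j'$ replaces each nonterminal. Induction on $i$ gives $w_{A_i'}=w_{A_i}^{-1}$. The sizes are additive, and the index order is preserved.

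For (4), I compute for each nonterminal a vector $\sigma(A_i)\in\Z^r$ by summing over the right-hand side. Terminals contribute $\pm e_j$, and the $|\sigma|$ entries are bounded by the lengths, so they have $O(n)$ bits. For support, a set $S(A_i)\subseteq\{1,\dots,r\}$ is the union over the right-hand side. Here $r$ is fixed, so each step is constant or polynomial work.

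The only slightly delicate point is the exponential length bound and its bit-size consequence in (1), together with checking that the path construction in (2) stays within the depth. Both are handled by the inductive estimates above.
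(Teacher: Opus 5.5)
Your proposal is correct. Parts (c) and (d) are essentially the paper's argument; the paper normalizes first, and you show this is not needed. The real differences are in (a) and (b).

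For the length bound in (a), the paper passes to Chomsky normal form. There a nonterminal of height $h$ produces at most $2^h$ letters, and the height is $O(n)$. You instead bound directly on the original grammar via
\[
 \max(1,|w_{A_i}|)\le\prod_{j\le i}\max(1,|W_j|)\le 2^{n}.
\]
This avoids normalization and gives an explicit constant.

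For (b), the paper adjoins the truncated production $P_{i,j}\to A_m[i:j]$ and invokes Hagenah's composition-system conversion (Theorem~7.1 in Schleimer's paper). That is exactly the alternative you mention. Your primary route is the standard root-to-leaf descent: build prefix nonterminals along one path, then suffix nonterminals inside the resulting grammar. This is self-contained and gives an $\SLP$ of size $O(n)$ explicitly, not merely polynomial size. It costs a few unstated base cases: $p=0$, $p=|w_{A}|$, and terminal leaves. It also needs the remark that the second descent runs over a grammar containing unit or empty productions. That is harmless, since the depth is still bounded by the number of nonterminals, or you can renormalize in linear size.

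The paper's choice is economical because it needs Hagenah's conversion anyway for compressed free and cyclic reduction (Proposition~\ref{p:cyclic-reduction}).
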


\begin{proof}
For (a), use the normalization described above. Since the number of
nonterminals and right-hand-side symbols in the input is $O(n)$, the
normalized program has size $O(n)$ and encoded length
$O(n\log(n+2))$ by Proposition~\ref{p:slp-size-comparison}. In Chomsky
normal form, a nonterminal of height $h$ has output length at most $2^h$,
while the height is $O(n)$. Hence $|w_{\CA}|\le 2^{O(n)}$. Applying the same
argument to the subprogram rooted at each original nonterminal $A_i$ gives
$|w_{A_i}|\le 2^{O(n)}$ uniformly in $i$.

The lengths of the original nonterminal expansions are then computed directly
in dependency order. Each terminal occurrence contributes one, each occurrence
of $A_j$ contributes the already computed integer $|w_{A_j}|$, and an empty
right-hand side contributes zero. Since there are $O(n)$ right-hand-side
symbols and every integer involved has $O(n)$ bits, the computation takes
polynomial time. Compare Lemmas~2.5 and~2.6 of~\cite{Schleimer}.

For (b), adjoin to $\CA$ a new root production
\[
 P_{i,j}\longrightarrow A_m[i:j].
\]
This is a composition system whose encoding length is polynomial in
$n+\blen(i)+\blen(j)$. Apply the composition-system-to-$\SLP$ conversion of Theorem~7.1 in~\cite{Schleimer}.

For (c), first normalize the input programs as in (a) and rename their
nonterminals so that the variable sets are disjoint. For concatenation, take
their disjoint union and add a new root whose production is the product of the
two old roots. For inversion, take a disjoint inverse copy of the
nonterminals. If $A_i\to A_jA_k$, put
$A_i^{-1}\to A_k^{-1}A_j^{-1}$, and if $A_i\to z\in\Sigma_r$, put
$A_i^{-1}\to z^{-1}$; the empty word is fixed. The resulting programs have size polynomial in the total input size; their
encoded lengths are polynomial as well by Proposition~\ref{p:slp-size-comparison}.

For (d), normalize as in (a), and associate to every nonterminal its support
subset and exponent-sum vector. At a terminal production these data are
immediate, and at a concatenation production one takes the union of supports
and adds vectors. Since $r$ is fixed, the supports have constant size, while
every exponent sum has $O(n)$ bits by (a).
\end{proof}

The following more detailed form of Schleimer's compressed-reduction result is the preprocessing theorem used throughout the paper.

\begin{prop}[Schleimer: compressed free and cyclic reduction]\label{p:cyclic-reduction}
Let $\CA$ be an $\SLP$ over $\Sigma_r$.
\begin{enumerate}
\item There is a deterministic polynomial-time algorithm that constructs an $\SLP$ $\CB_{\rm fr}$ satisfying
\[
 w_{\CB_{\rm fr}}=\red(w_{\CA}).
\]
The size $\lVert\CB_{\rm fr}\rVert$ is bounded by a polynomial in $\lVert\CA\rVert$, and the integer
$|\red(w_{\CA})|=|w_{\CB_{\rm fr}}|$ can be computed in binary in polynomial time.
\item There is a deterministic polynomial-time algorithm that constructs $\SLP$s $\CB_{\rm cyc}$ and $\CC$ satisfying
\[
 w_{\CB_{\rm cyc}}=\cycred(w_{\CA})
\]
and
\[
 \red(w_{\CA})
 =w_{\CC}\,w_{\CB_{\rm cyc}}\,w_{\CC}^{-1}
\]
as terminal words. In particular, $w_{\CB_{\rm cyc}}$ is freely and cyclically reduced and represents a conjugate of $g_{\CA}$. The sizes of both output programs are bounded by a polynomial in $\lVert\CA\rVert$, and the cyclic length
$\lVert w_{\CA}\rVert_{\rm cyc}=|w_{\CB_{\rm cyc}}|$ can be computed in binary in polynomial time.
\end{enumerate}
\end{prop}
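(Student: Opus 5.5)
Part (a) is Schleimer's compressed free reduction, and I will derive part (b) from it using only compressed equality tests and the factor and inverse operations of Lemma~\ref{l:slp-ops}.

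For (a), normalize $\CA$ to Chomsky normal form via Lemma~\ref{l:slp-ops}(a). Then process nonterminals in dependency order and maintain, for each $A_i$, an $\SLP$ fragment producing $\red(w_{A_i})$. At a production $A_i\to A_jA_k$ with $u=\red(w_{A_j})$ and $v=\red(w_{A_k})$, one has $\red(uv)=u[0:|u|-\ell]\,v[\ell:|v|]$, where $\ell$ is the length of the longest suffix of $u$ that equals the inverse of a prefix of $v$. Concatenating these two factors through truncated nonterminals gives a composition system. Hagenah's conversion (Theorem~7.1 of~\cite{Schleimer}) turns it back into an $\SLP$, and $|\red(w_\CA)|$ is then read off by Lemma~\ref{l:slp-ops}(a). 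To find $\ell$, I binary search over $t\in[0,\min(|u|,|v|)]$. The predicate ``$u[|u|-t:|u|]=(v[0:t])^{-1}$'' is monotone in $t$, since cancellation is prefix-closed. Each test is an equality test of two compressed words, which Plandowski's algorithm (equivalently, Lohrey's compressed-equality results cited in~\cite{Schleimer}) decides in polynomial time; the words themselves are built by Lemma~\ref{l:slp-ops}(b),(c). The search takes $O(n)$ rounds.

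The main obstacle is size control. If Hagenah's conversion is applied naively after each of the $m$ steps, the sizes may compose multiplicatively. I would follow Schleimer: keep a single global composition system in which each new production $A_i\to A_j[0:|u|-\ell]\,A_k[\ell:|v|]$ refers to the already reduced nonterminals, and apply the conversion once at the end. The total size of this system is linear in $m$ plus the bit lengths of the cut indices, which are $O(n)$ each. The equality tests during the process can be run on locally converted copies, which are polynomial each time because they are discarded afterwards.

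For (b), set $u=w_{\CB_{\rm fr}}$ with $N=|u|$. Let $c$ be the longest prefix of $u$ whose inverse is a suffix of $u$, subject to $|c|\le N/2$, chosen so that the remaining middle word is the cyclic reduction. Monotonicity again holds: if the outer $t$ letters cancel pairwise, so do the outer $t-1$. So binary search on $t$ with the compressed equality test ``$u[0:t]=(u[N-t:N])^{-1}$'' finds the maximal $t$. Put $\CC$ to be an $\SLP$ for $u[0:t]$ and $\CB_{\rm cyc}$ an $\SLP$ for $u[t:N-t]$, both obtained from Lemma~\ref{l:slp-ops}(b). Then $u=w_\CC w_{\CB_{\rm cyc}}w_\CC^{-1}$ holds literally. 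Maximality of $t$, together with $u$ being freely reduced, shows the middle word is cyclically reduced. The middle word cannot be empty unless $u$ is empty, since a freely reduced word of the form $cc^{-1}$ with $c$ nonempty does not exist. Finally, the cyclic length is $N-2t$, computed in binary.
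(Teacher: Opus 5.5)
Your argument is correct and takes the same route as the paper, which simply cites Schleimer's Theorem~3.3 (a composition system for the free reduction, converted by Hagenah's algorithm) and Corollary~3.6 (removing the maximal mutually inverse prefix and suffix)~\cite{Schleimer}; you have unpacked these citations into the underlying binary-search-plus-compressed-equality argument. Your size-control point is also right: only the integer cut lengths feed back into the single global composition system, never the locally converted $\SLP$s, so that system stays polynomial and each conversion is polynomial.
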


\begin{proof}
Put the input in Chomsky normal form. Theorem~3.3 of~\cite{Schleimer} constructs, in polynomial time, a composition system producing the free reduction of $w_{\CA}$. Hagenah's conversion theorem, Theorem~7.1 of~\cite{Schleimer}, converts it to the $\SLP$ $\CB_{\rm fr}$ with polynomial overhead. This proves (a), including the polynomial size bound; the length is then computed by Lemma~\ref{l:slp-ops}(a).

Starting with the freely reduced compressed word, Corollary~3.6 of~\cite{Schleimer} constructs a compressed cyclic reduction together with a compressed conjugating word. A second application of Theorem~7.1 converts the resulting composition systems to the $\SLP$s $\CB_{\rm cyc}$ and $\CC$. The description in Corollary~3.6 removes the maximal mutually inverse prefix and suffix, so its output is precisely the word $\cycred(w_{\CA})$ defined above. The polynomial size and length claims follow as in (a).
\end{proof}

In particular, the compressed word problem in $F_r$ is decidable in polynomial time: $g_{\CA}=1$ if and only if $|\red(w_{\CA})|=0$. Since conjugation is an automorphism of $F_r$, $g_{\CA}$ is primitive if and only if the cyclically reduced element represented by $\CB_{\rm cyc}$ is primitive. The empty output represents the identity and is not primitive.

\section{Compressed substitutions and Whitehead minimality}
\label{s:whitehead-minimization}

For a conjugacy class $[g]$ in $F_r$, let
\[
 \lVert[g]\rVert_{X_r}
 =\min\{|u|:u\in\Sigma_r^*\text{ is freely reduced and represents an
 element conjugate to }g\}.
\]
Equivalently, if $u$ is any word representing $g$, then
$\lVert[g]\rVert_{X_r}=|\cycred(u)|$.  The automorphism group acts on
conjugacy classes by $\phi([g])=[\phi(g)]$.  We say that $[g]$ is
\emph{automorphically minimal} if
\[
 \lVert\phi([g])\rVert_{X_r}\ge \lVert[g]\rVert_{X_r}
 \qquad\text{for every }\phi\in\Aut(F_r).
\]
For fixed $r$, we test automorphic minimality in deterministic polynomial
time by computing the weighted Whitehead graph of the cyclically reduced
compressed word and applying Whitehead's length-change formula.

\subsection{Compressed images under fixed and iterated endomorphisms}

We first record substitution facts that will also be useful later.  Literal
replacement of every terminal occurrence can give a poor size estimate.  A
shared image layer avoids this duplication for a single fixed endomorphism,
and one shared layer for each stage gives the appropriate iterated
construction.  This is the basic
compression mechanism used by Lohrey--Schleimer~\cite{LohreySchleimer}
and by Schleimer in his polynomial-time algorithm for the word problem in
$\Aut(F_r)$~\cite[Theorem~5.2 and Remark~5.3]{Schleimer}.

The following one-layer construction shares the fixed image of each terminal
letter throughout the grammar.

\begin{prop}[a fixed endomorphism applied to an $\SLP$]
\label{p:fixed-endomorphism}
Fix an endomorphism $\phi\in\operatorname{End}(F_r)$.  For every
$z\in\Sigma_r$, fix once and for all a terminal word $U_z\in\Sigma_r^*$
representing $\phi(z)$, with $U_{z^{-1}}=U_z^{-1}$, and put
\[
 C_\phi=2r+\sum_{z\in\Sigma_r}|U_z|.
\]
\begin{enumerate}
\item Given an arbitrary $\SLP$ $\CA$ over $\Sigma_r$, one can construct an
$\SLP$ $\CB$ over $\Sigma_r$ such that
\[
 w_{\CB}=\phi_{\#}(w_{\CA}),
\]
where $\phi_{\#}:\Sigma_r^*\to\Sigma_r^*$ is the monoid homomorphism
$z\mapsto U_z$, and
\begin{equation}\label{eq:fixed-substitution-additive}
 \lVert\CB\rVert=\lVert\CA\rVert+C_\phi.
\end{equation}
Moreover,
\[
 \beta(\CB)
 =O_\phi\!\left(\lVert\CA\rVert
        \log(\lVert\CA\rVert+2)\right),
\]
and the canonical code of $\CB$ can be written within this bit-operation
bound.
\item In deterministic time polynomial in $\lVert\CA\rVert$, one can also
construct $\SLP$s producing
\[
 \red\bigl(\phi_{\#}(w_{\CA})\bigr)
 \quad\text{and}\quad
 \cycred\bigl(\phi_{\#}(w_{\CA})\bigr),
\]
and compute their lengths in binary.  In particular, the freely reduced
length and cyclically reduced length of $\phi(g_{\CA})$ are polynomial-time
computable.
\end{enumerate}
\end{prop}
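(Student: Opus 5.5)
The construction adds a shared \emph{image layer} of new nonterminals, one for each terminal letter. For every $z\in\Sigma_r$ we introduce a nonterminal $D_z$ with production $D_z\to U_z$. These $2r$ productions come first in dependency order, so their right-hand sides contain only terminals. Their total contribution to the size is $2r+\sum_z|U_z|=C_\phi$.

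Next we copy the productions of $\CA$. For each $A_i\to W_i$ we write $A_i'\to W_i'$, where $W_i'$ is obtained from $W_i$ by replacing every terminal occurrence $z$ by the single symbol $D_z$ and every $A_j$ by $A_j'$. Symbols are replaced one for one, so $|W_i'|=|W_i|$. The nonterminal count rises by exactly $2r$, which gives $\lVert\CB\rVert=\lVert\CA\rVert+C_\phi$ exactly.

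Correctness follows by induction along the index order: $w_{A_i'}=\phi_{\#}(w_{A_i})$. This holds because $\phi_{\#}$ is a monoid homomorphism and $w_{D_z}=U_z=\phi_{\#}(z)$. Relabelling in order ($D$'s first, then the $A_i'$) gives a valid $\SLP$ with root $A_m'$. The condition $U_{z^{-1}}=U_z^{-1}$ is not needed for this identity. It only ensures that $\phi_{\#}$ induces $\phi$ on $F_r$, which part (b) uses.

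For the encoded-length bound we apply Proposition~\ref{p:slp-size-comparison} to $\CB$:
\[
\beta(\CB)\le(\lVert\CA\rVert+C_\phi+1)\bigl(\blen(\lVert\CA\rVert+C_\phi)+3\bigr)=O_\phi\bigl(\lVert\CA\rVert\log(\lVert\CA\rVert+2)\bigr).
\]
Writing the code is a single left-to-right pass. It emits the fixed code of the image layer, then each copied production with the indices shifted by $2r$. Each symbol costs $O(\log)$ bit operations, which stays within the same bound. The one point needing care is the index shift in binary. A binary increment by the constant $2r$ costs $O(\blen(s))$ per index, so the total cost is still $O(s\log s)$.

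Part (b) follows by feeding $\CB$ into Proposition~\ref{p:cyclic-reduction}(a) and (b). This yields $\SLP$s for $\red(\phi_\#(w_{\CA}))$ and $\cycred(\phi_\#(w_{\CA}))$ of polynomial size, in polynomial time, together with their lengths in binary via Lemma~\ref{l:slp-ops}(a). Since $\phi_\#(w_{\CA})$ represents $\phi(g_{\CA})$, these lengths are the freely reduced length of $\phi(g_{\CA})$ and the cyclic length of its conjugacy class. There is no real obstacle; the only subtlety is the bookkeeping that keeps the size identity exact rather than merely $O(\cdot)$, which the one-for-one symbol replacement handles.
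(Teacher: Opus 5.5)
Your proof is correct and matches the paper's argument: a shared image layer $D_z\to U_z$, a symbol-for-symbol copy of the productions of $\CA$ giving the exact identity $\lVert\CB\rVert=\lVert\CA\rVert+C_\phi$, Proposition~\ref{p:slp-size-comparison} for $\beta(\CB)$, and Proposition~\ref{p:cyclic-reduction} for part (b). One small correction to a side remark: each $U_z$ already represents $\phi(z)$ for every $z\in\Sigma_r$, including the inverse letters, so $\phi_{\#}$ induces $\phi$ on $F_r$ even without the literal condition $U_{z^{-1}}=U_z^{-1}$, and that condition is therefore not used in either part.
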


\begin{proof}
Write $\Sigma_r=\{z_1,\dots,z_{2r}\}$.  First introduce nonterminals
$D_{z_1},\dots,D_{z_{2r}}$ with productions
\[
 D_z\longrightarrow U_z.
\]
Suppose
\[
 \CA=(\Sigma_r,\{A_1,\dots,A_n\},A_n,\mathcal P),
 \qquad A_i\longrightarrow W_i.
\]
After the variables $D_z$, create variables
$\widehat A_1,\dots,\widehat A_n$ in the original order.  In $W_i$, replace
each terminal $z$ by $D_z$ and each nonterminal $A_j$ by $\widehat A_j$;
use the resulting word as the production for $\widehat A_i$, and designate
$\widehat A_n$ as the root.  Induction on $i$ gives
\[
 w_{\widehat A_i}=\phi_{\#}(w_{A_i}),
\]
so the resulting program $\CB$ has the required output.

The transformed copies of the original productions contain exactly as many
right-hand-side symbols as before.  The new image layer contributes $2r$
nonterminals and $\sum_z|U_z|$ right-hand-side symbols.  This proves
\eqref{eq:fixed-substitution-additive}.  Proposition~\ref{p:slp-size-comparison}
gives the encoded-length bound.  The construction scans the input once and
writes the output code, so it has the same asymptotic bit cost.

Part (b) follows by applying Proposition~\ref{p:cyclic-reduction} to the
program constructed in (a).
\end{proof}

For a fixed finite family, a sequence of $m$ endomorphisms has encoding
length $\Theta(m)$.

\begin{prop}[iterated endomorphisms from a fixed finite family]
\label{p:iterated-endomorphisms}
Fix a finite set $\mathcal Q\subseteq\operatorname{End}(F_r)$. For each
$\psi\in\mathcal Q$ and $z\in\Sigma_r$, fix a word
$U_{\psi,z}\in\Sigma_r^*$ representing $\psi(z)$, with
$U_{\psi,z^{-1}}=U_{\psi,z}^{-1}$, and let
$\psi_{\#}:\Sigma_r^*\to\Sigma_r^*$ be the monoid homomorphism
$z\mapsto U_{\psi,z}$. Put
\[
 L_{\mathcal Q}=\max_{\psi\in\mathcal Q,\ z\in\Sigma_r}|U_{\psi,z}|.
\]
Given an $\SLP$ $\CA$ over $\Sigma_r$ and a sequence
$\boldsymbol\phi=(\phi_1,\dots,\phi_m)\in\mathcal Q^m$, put
\[
 \Phi_{\boldsymbol\phi}=\phi_m\circ\cdots\circ\phi_1,
 \qquad
 (\Phi_{\boldsymbol\phi})_{\#}
 =(\phi_m)_{\#}\circ\cdots\circ(\phi_1)_{\#},
\]
with both maps equal to the identity when $m=0$. There is a deterministic
algorithm with the following properties.
\begin{enumerate}
\item It constructs an $\SLP$ $\CB$ satisfying
\[
 w_{\CB}=(\Phi_{\boldsymbol\phi})_{\#}(w_{\CA}).
\]
\item If $M=\lVert\CA\rVert$, then
\begin{equation}\label{eq:iterated-endomorphism-bound}
 \lVert\CB\rVert\le M+2r(L_{\mathcal Q}+1)m.
\end{equation}
\item The encoded length satisfies
\begin{equation}\label{eq:iterated-endomorphism-encoding-bound}
 \beta(\CB)
 =O_{r,\mathcal Q}\!\left((M+m+1)\log(M+m+2)\right),
\end{equation}
and $\CB$ can be constructed within the same bit-operation bound.
\item In time polynomial in $M+m$, one can also construct $\SLP$s for
\[
 \red\bigl((\Phi_{\boldsymbol\phi})_{\#}(w_{\CA})\bigr)
 \quad\text{and}\quad
 \cycred\bigl((\Phi_{\boldsymbol\phi})_{\#}(w_{\CA})\bigr),
\]
and compute their lengths.
\end{enumerate}
\end{prop}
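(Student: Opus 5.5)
We will prove this by stacking $m$ shared image layers, one for each endomorphism in the sequence, on top of a single verbatim copy of $\CA$. This iterates the construction of Proposition~\ref{p:fixed-endomorphism}. The essential point is the order of composition. Applying $(\phi_m)_\#\circ\cdots\circ(\phi_1)_\#$ to $w_{\CA}$ is the same as substituting, for each terminal $z$ of $\CA$, the word $(\phi_m)_\#\cdots(\phi_1)_\#(z)$. We therefore build the layers so that the top layer describes $\phi_m$ applied to the letters of $\Sigma_r$, the next describes $\phi_{m-1}$, and so on.

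Concretely, introduce layer-$0$ nonterminals $D^{(0)}_z\to z$ for $z\in\Sigma_r$. For $k=1,\dots,m$, introduce $D^{(k)}_z$ whose right-hand side is $U_{\phi_{m-k+1},z}$ with each letter $y$ replaced by $D^{(k-1)}_y$. By induction on $k$,
\[
 w_{D^{(k)}_z}=(\phi_m)_\#\cdots(\phi_{m-k+1})_\#(z).
\]
This is the one step where the order must be checked. We have $w_{D^{(k)}_z}=\prod_y w_{D^{(k-1)}_y}$ over the letters $y$ of $U_{\phi_{m-k+1},z}$. The induction hypothesis applies $\phi_m,\dots,\phi_{m-k+2}$ to each such letter, which gives the claimed $(\phi_m\cdots\phi_{m-k+2})_\#(U_{\phi_{m-k+1},z})$. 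Finally, copy the productions of $\CA$ as $\widehat A_i$, replacing each terminal $z$ by $D^{(m)}_z$ and each $A_j$ by $\widehat A_j$, and make $\widehat A_n$ the root. Induction on $i$ gives part (a). All indices are ordered layer by layer and then the $\widehat A_i$, so the index condition of Definition~\ref{d:slp} holds. When $m=0$, layer $0$ just replaces terminals by $D^{(0)}_z$; one may instead skip this and return $\CA$ unchanged.

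For part (b), count symbols. The copy of $\CA$ has exactly $M$ nonterminals plus right-hand-side symbols. Each layer $k\ge1$ contributes $2r$ nonterminals and at most $2rL_{\mathcal Q}$ right-hand-side symbols, hence at most $2r(L_{\mathcal Q}+1)$ in total. Layer $0$ costs $4r$, which is not absorbed by the stated bound when $m$ is small. To get exactly \eqref{eq:iterated-endomorphism-bound}, we drop layer $0$ and let the layer-$1$ productions use terminals directly. Then $\lVert\CB\rVert\le M+2r(L_{\mathcal Q}+1)m$.

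Part (c) follows from (b) together with Proposition~\ref{p:slp-size-comparison}, since $L_{\mathcal Q}$ and $r$ are constants. The construction writes each production once, and each index has $O(\log(M+m+2))$ bits, so the bit cost matches this bound. Part (d) follows by applying Proposition~\ref{p:cyclic-reduction} to $\CB$, whose size is polynomial in $M+m$; the lengths are then obtained from Lemma~\ref{l:slp-ops}(a).

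The main obstacle is bookkeeping rather than depth: getting the layer order right so that the composite is $\phi_m\circ\cdots\circ\phi_1$ and not its reverse, and arranging the bottom layer so that the additive constant matches \eqref{eq:iterated-endomorphism-bound} exactly.
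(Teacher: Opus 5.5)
Your proposal is correct and is essentially the paper's proof: once you drop layer $0$, your $D^{(k)}_z$ is exactly the paper's $Y_{m-k+1,z}$. As in the paper, the first layer built uses the terminal words $U_{\phi_m,z}$, each later layer substitutes $U_{\phi_p,z}$ into the previous one, the copy of $\CA$ is rewritten through the last layer, $m=0$ returns $\CA$, and the counts and citations for (b)--(d) are the same; indexing the layers upward from the terminals rather than by the endomorphism applied is only a cosmetic difference.
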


\begin{proof}
Let $d=|\Sigma_r|=2r$.  If $m=0$, take $\CB=\CA$.  Suppose that $m\ge1$.
For every $z\in\Sigma_r$, first introduce a nonterminal $Y_{m,z}$ with
production
\[
 Y_{m,z}\longrightarrow U_{\phi_m,z}.
\]
For $p=m-1,m-2,\dots,1$, write
\[
 U_{\phi_p,z}=z_1\cdots z_\ell
\]
and add the production
\begin{equation}\label{eq:iterated-layer-production}
 Y_{p,z}\longrightarrow
 Y_{p+1,z_1}\cdots Y_{p+1,z_\ell}.
\end{equation}
If $U_{\phi_p,z}=\eps$, the right-hand side is empty.  The layers are created
in the order $m,m-1,\dots,1$, so every variable on the right of
\eqref{eq:iterated-layer-production} was created earlier.  Downward induction
on $p$ gives
\begin{equation}\label{eq:iterated-layer-value}
 w_{Y_{p,z}}
 =\bigl((\phi_m)_{\#}\circ\cdots\circ(\phi_p)_{\#}\bigr)(z).
\end{equation}

Now suppose that
\[
 \CA=(\Sigma_r,\{A_1,\dots,A_n\},A_n,\mathcal P),
 \qquad A_i\longrightarrow W_i.
\]
After all layered variables, create variables
$\widehat A_1,\dots,\widehat A_n$ in the original order.  In the right-hand
side $W_i$, replace each terminal $z$ by $Y_{1,z}$ and each nonterminal
$A_j$ by $\widehat A_j$.  Use the resulting word as the production for
$\widehat A_i$, and designate $\widehat A_n$ as the root.  Induction on $i$,
together with~\eqref{eq:iterated-layer-value}, gives
\[
 w_{\widehat A_i}
 =(\Phi_{\boldsymbol\phi})_{\#}(w_{A_i}),
\]
so the resulting program $\CB$ has the required output.

Let $s=\sum_{i=1}^n|W_i|$.  The transformed copy of $\CA$ has the same
$n$ nonterminals and the same $s$ right-hand-side symbols.  The layered part
has $dm$ nonterminals and at most $dL_{\mathcal Q}m$ right-hand-side symbols.
Thus
\[
 \lVert\CB\rVert
 \le n+s+d(L_{\mathcal Q}+1)m
 =M+2r(L_{\mathcal Q}+1)m,
\]
which proves~\eqref{eq:iterated-endomorphism-bound}.  The comparison in
Proposition~\ref{p:slp-size-comparison} gives
\eqref{eq:iterated-endomorphism-encoding-bound}.  There are
$O_{r,\mathcal Q}(M+m+1)$ grammar symbols to process, and every index in the
output has $O(\log(M+m+2))$ bits, giving the same bound for construction time.

Finally, apply Proposition~\ref{p:cyclic-reduction} once, after the entire
layered program has been constructed.  There is no need to expand or freely
reduce any intermediate image, even though its uncompressed length may be
exponential in $m$.
\end{proof}

\begin{cor}[Schleimer]\label{c:schleimer-aut-word-problem}
For fixed $r\ge2$, the word problem in $\Aut(F_r)$ is
decidable in deterministic polynomial time with respect to any fixed
finite generating set.
\end{cor}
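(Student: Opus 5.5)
Fix a finite generating set $S$ of $\Aut(F_r)$, for instance the Nielsen or Whitehead automorphisms together with their inverses. Fix once and for all terminal words $U_{\psi,z}$ representing $\psi(z)$ for $\psi\in S$ and $z\in\Sigma_r$, with $U_{\psi,z^{-1}}=U_{\psi,z}^{-1}$. Put $\mathcal Q=S$. The input is a word $\phi_1\cdots\phi_m$ over $S$, of encoded length $\Theta(m)$. It represents the automorphism $\Phi=\phi_m\circ\cdots\circ\phi_1$ or its reverse, depending on the composition convention. If the convention is reversed, I will simply reverse the sequence, which costs linear time. The plan is to decide whether $\Phi=\mathrm{id}$ by checking $\Phi(x_i)=x_i$ for every $i=1,\dots,r$. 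This suffices because an automorphism (indeed an endomorphism) of $F_r$ is determined by its values on the basis $X_r$.

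For each fixed $i$, let $\CA_i$ be the one-rule program $A_1\to x_i$, of constant size. Apply Proposition~\ref{p:iterated-endomorphisms} to $\CA_i$ and the sequence $\boldsymbol\phi$. This gives an $\SLP$ $\CB_i$ with $w_{\CB_i}=(\Phi_{\boldsymbol\phi})_{\#}(x_i)$, a word representing $\Phi(x_i)$. By \eqref{eq:iterated-endomorphism-bound}, its size is $O_{r,S}(m+1)$, and by \eqref{eq:iterated-endomorphism-encoding-bound}, it is constructed in time $O((m+1)\log(m+2))$. The key point is that each intermediate image may have length exponential in $m$, but it is never written out: the layered variables $Y_{p,z}$ share all the images.

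Next, form an $\SLP$ for $w_{\CB_i}x_i^{-1}$ using Lemma~\ref{l:slp-ops}(c). Apply Proposition~\ref{p:cyclic-reduction}(a) to compute $|\red(w_{\CB_i}x_i^{-1})|$ in binary in time polynomial in $m$. Then $\Phi(x_i)=x_i$ holds if and only if this length is $0$. Alternatively, one can compute the reduced form of $w_{\CB_i}$ directly and compare it with the single letter $x_i$, checking that its length is $1$ and reading off that letter via Lemma~\ref{l:slp-ops}(b). There are $r$ such tests, each polynomial in $m$, and $r$ is fixed, so the total running time is polynomial in the input length.

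The only substantive ingredient is the polynomial-time compressed free reduction of Proposition~\ref{p:cyclic-reduction}, that is, Schleimer's theorem. Everything else is the linear-size layered construction already established. So the main care point is bookkeeping: the size bound on $\CB_i$ must be linear in $m$ rather than exponential, so that the polynomial from the reduction step stays polynomial in the input size. The independence from the choice of generating set is routine. Any other finite generating set $S'$ has each of its elements expressed as a fixed word over $S$. Rewriting the input over $S$ therefore changes its length only by a constant factor; alternatively, one can apply Proposition~\ref{p:iterated-endomorphisms} with $\mathcal Q=S'$ directly.
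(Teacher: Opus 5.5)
Your proposal is correct and follows essentially the same route as the paper: apply Proposition~\ref{p:iterated-endomorphisms} to the one-letter programs for the basis elements, form $\Phi_{\boldsymbol\phi}(x_j)x_j^{-1}$ by Lemma~\ref{l:slp-ops}(c), and test whether its free reduction is empty using Proposition~\ref{p:cyclic-reduction}(a). One small point to make explicit is that $\mathcal Q$ should be the generating set together with its formal inverses, as in the paper, so that input words over $S^{\pm1}$ are covered even when $S$ is not symmetric.
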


\begin{proof}
Let $\mathcal Q$ be the chosen finite generating set together with formal
inverses, regarded as a finite subset of $\Aut(F_r)$.  Given
a word $\boldsymbol\phi\in\mathcal Q^m$, apply
Proposition~\ref{p:iterated-endomorphisms} to the one-letter $\SLP$ for
each basis element $x_j$.  For every $j$, use
Lemma~\ref{l:slp-ops}(c) to construct a compressed word representing
\[
 \Phi_{\boldsymbol\phi}(x_j)x_j^{-1},
\]
and use Proposition~\ref{p:cyclic-reduction}(a) to test whether its freely
reduced length is zero.  The represented automorphism is the identity if
and only if all $r$ tests succeed.  Since $r$ is fixed, the total running
time is polynomial in $m$.
\end{proof}

\begin{rem}[relation with Schleimer's construction]
\label{r:schleimer-iterated}
The proof of Corollary~\ref{c:schleimer-aut-word-problem}, and in
particular the layered part of Proposition~\ref{p:iterated-endomorphisms},
is the same idea that drives Schleimer's proof of
Theorem~5.2 in~\cite{Schleimer}.
For a word of length $m$ in a fixed set of Nielsen generators, Schleimer
uses one nonterminal $A_{i,p}$ for each basis element $x_i$ and each stage
$p$.  The initial variables produce the basis letters, and the production
at stage $p$ is obtained by writing the image under the $p$th Nielsen
generator in terms of the variables from the adjacent layer.  Since a
Nielsen generator has basis images of uniformly bounded length, every stage
adds only $O_r(1)$ productions and right-hand-side symbols.  He then applies his compressed free-reduction algorithm
\cite[Theorem~3.3]{Schleimer} and checks whether every final basis image is
the corresponding one-letter word.  This proves that the word problem in
$\Aut(F_r)$ is decidable in polynomial time.  Schleimer further observes
that the same layered programs allow both a free-group word and an
automorphism word to be treated as inputs in his free-by-cyclic algorithm,
with running time polynomial in the two input lengths
\cite[Remark~5.3]{Schleimer}.

Under the size convention of Definition~\ref{d:slp}, the program in
Schleimer's proof has size $O(rm)$.  Proposition~\ref{p:slp-size-comparison}
therefore gives encoded length
\[
 O\bigl(rm\log(rm+2)\bigr),
\]
and hence $O(m\log(m+2))$ for fixed $r$.  Proposition~\ref{p:iterated-endomorphisms}
isolates this grammar construction, allows an arbitrary initial compressed
word $w_{\CA}$, and extends it from Nielsen automorphisms to any fixed finite
family of endomorphisms.
\end{rem}

For every fixed Whitehead automorphism $\tau$, Proposition~\ref{p:fixed-endomorphism}
applies with $|U_z|\le3$ for every $z\in\Sigma_r$, and hence one may take
$C_\tau\le8r$.  More generally, since $\mathfrak W_r$ is finite for fixed
$r$, Proposition~\ref{p:iterated-endomorphisms} applies to an input sequence
of Whitehead automorphisms.  For the minimality test below, however, the
weighted Whitehead graph gives a more direct route and avoids constructing
any of the individual images.

\subsection{Whitehead automorphisms and weighted Whitehead graphs}

A \emph{Whitehead automorphism of the first kind} is a relabeling
automorphism: its restriction to $\Sigma_r$ is a permutation commuting with
inversion.  Such an automorphism preserves cyclic length.  A
\emph{Whitehead automorphism of the second kind} is specified by a
\emph{characteristic pair} $(A,a)$, where
\[
 A\subseteq\Sigma_r,\qquad a\in A,\qquad a^{-1}\notin A.
\]
It fixes $a^{\pm1}$ and, for
$x\in\Sigma_r\setminus\{a,a^{-1}\}$, is given by
\begin{equation}\label{eq:whitehead-characteristic-pair}
 \tau_{(A,a)}(x)=
 \begin{cases}
 x, &x\notin A\text{ and }x^{-1}\notin A,\\
 xa, &x\in A\text{ and }x^{-1}\notin A,\\
 a^{-1}x, &x\notin A\text{ and }x^{-1}\in A,\\
 a^{-1}xa, &x\in A\text{ and }x^{-1}\in A.
 \end{cases}
\end{equation}
These two kinds form the finite set $\mathfrak W_r$ of Whitehead
automorphisms relative to the basis $X_r$.  There are at most
$2r\,4^{r-1}$ characteristic pairs of the second kind, a constant when $r$
is fixed.

A conjugacy class is \emph{Whitehead minimal} if no automorphism in
$\mathfrak W_r$ decreases its cyclic length.  Since automorphisms of the
first kind preserve cyclic length, it is enough to consider the
characteristic pairs above.

\begin{prop}[Whitehead's length-reduction theorem]
\label{p:whitehead-reduction}
A conjugacy class in $F_r$ is automorphically minimal if and only if it is
Whitehead minimal.  More precisely, if a nontrivial cyclically reduced word
$w$ does not represent an automorphically minimal conjugacy class, then
there is a Whitehead automorphism $\tau$ of the second kind such that
\[
 \lVert\tau([w])\rVert_{X_r}<|w|.
\]
\end{prop}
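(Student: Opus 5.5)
The plan is to deduce the statement from Whitehead's peak-reduction lemma, in the form given by Higgins--Lyndon and Gersten, using that $\mathfrak W_r$ generates $\Aut(F_r)$. One direction is trivial. If $[w]$ is automorphically minimal, then no element of $\mathfrak W_r\subseteq\Aut(F_r)$ decreases its cyclic length, so $[w]$ is Whitehead minimal. For the converse, together with the ``more precisely'' clause, suppose $w$ is nontrivial and cyclically reduced, and some $\phi\in\Aut(F_r)$ satisfies $\lVert\phi([w])\rVert_{X_r}<|w|$. We must produce a second-kind $\tau\in\mathfrak W_r$ with $\lVert\tau([w])\rVert_{X_r}<|w|$.

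The first step is generation. The Nielsen generators (permutations, inversions, and $x_i\mapsto x_ix_j$) are Whitehead automorphisms: the first two kinds are first-kind, and the last is $\tau_{(\{x_i,x_j\},x_j)}$. Hence we may write $\phi=\tau_k\circ\cdots\circ\tau_1$ with every $\tau_i\in\mathfrak W_r$. Put $\ell_i=\lVert\tau_i\cdots\tau_1([w])\rVert_{X_r}$, so that $\ell_0=|w|$ and $\ell_k<\ell_0$.

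The second step is the peak-reduction lemma. Suppose $\sigma,\tau\in\mathfrak W_r$ and $[u]$ is a class with $\lVert\sigma^{-1}([u])\rVert\le\lVert[u]\rVert\ge\lVert\tau([u])\rVert$, with at least one of these inequalities strict. Then $\tau\sigma$ can be rewritten as a product $\rho_s\cdots\rho_1$ of Whitehead automorphisms whose intermediate images of $\sigma^{-1}([u])$ all have length strictly less than $\lVert[u]\rVert$. I would prove this as Higgins--Lyndon do. Write $\sigma^{-1}$ and $\tau$ via characteristic pairs $(A,a)$ and $(B,b)$. Use the cut formula
\[
 \lVert\tau_{(A,a)}([u])\rVert-\lVert[u]\rVert=\lvert A\cdot A'\rvert-\lvert a\rvert_{\Gamma},
\]
which is expressed through the weighted Whitehead graph $\Gamma$ of $u$: here $A\cdot A'$ is the total weight of edges joining $A$ to its complement, and $\lvert a\rvert_\Gamma$ is the valence of $a$. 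Then compare cut sizes using submodularity, $\lvert P\cdot P'\rvert+\lvert Q\cdot Q'\rvert\ge\lvert(P\cap Q)\cdot(P\cap Q)'\rvert+\lvert(P\cup Q)\cdot(P\cup Q)'\rvert$. This is done by cases according to whether $a=b$, $a=b^{-1}$, $a\in B$, and whether $A\cap B$, $A\cap B'$, and so on, are empty. In each case, $\tau_{(B,b)}\tau_{(A,a)}^{-1}$ is refactored through pairs such as $(A\cap B,a)$ or $(A\cup B,a)$. The first-kind automorphisms are handled separately, since they commute past everything up to relabeling.

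The third step is an induction on peaks. Among all factorizations of $\phi$, choose one minimizing the pair (maximum height $h=\max_i\ell_i$, number of indices attaining $h$), ordered lexicographically. Suppose $h>\ell_0$. Take a peak index $i$ with $\ell_{i-1}\le\ell_i=h\ge\ell_{i+1}$ and at least one inequality strict. Such an index exists because $\ell_0<h$, and where a plateau occurs we take its endpoint. Applying the lemma to that peak lowers the pair, contradicting minimality. Hence $h=\ell_0$, and the sequence starts by either decreasing or staying level. Staying-level steps are a nuisance: a level run must eventually drop, since $\ell_k<\ell_0$, so we apply the same peak argument at the first drop to push it leftward. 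Repeating this shows $\ell_1<\ell_0$ after discarding first-kind factors, which preserve length. The automorphism $\tau_1$, necessarily of the second kind, is the desired one.

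I expect the peak-reduction lemma in the second step to be the main obstacle. It is a long but finite case analysis on the relative position of the two characteristic pairs, and its key inputs are the cut formula and submodularity of the weighted cut function. Since this is classical, I would ultimately cite Whitehead's theorem in the Higgins--Lyndon form rather than reproduce every case.
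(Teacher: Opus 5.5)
Your proposal is correct, but it takes a different route from the paper. The paper proves only the immediate forward direction. For the reverse direction it cites the length-reduction half of Whitehead's theorem (Whitehead; Kapovich--Schupp--Shpilrain, Proposition~1.2(1)), and it remarks separately that the trivial class is minimal.

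You instead give the standard textbook derivation of that statement (as in Lyndon--Schupp) from the Higgins--Lyndon peak-reduction lemma. Each step checks out:
\begin{enumerate}
\item $\mathfrak W_r$ generates $\Aut(F_r)$. Your identification of $x_i\mapsto x_ix_j$ with $\tau_{(\{x_i,x_j\},x_j)}$ is correct in the paper's convention.
\item A factorization minimal for (height, multiplicity) cannot peak above $\ell_0$. The first index $i$ attaining $h$ satisfies $\ell_{i-1}<\ell_i$, and $i<k$ because $\ell_k<\ell_0<h$.
\item Repeated peak reduction at the first strict drop moves that drop to index $1$, giving $\ell_1<\ell_0$.
\end{enumerate}

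What your route buys: it makes the mechanism explicit and in fact yields the stronger peak-reduced factorization of every $\phi$. It also ties in naturally with the length-change formula the paper uses next. However, you still cite peak reduction, a stronger classical result than the statement itself, so the argument is no less dependent on the literature. The paper's one-line citation is all that the minimality test needs.

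One small correction: the phrase ``after discarding first-kind factors'' is unnecessary, and it is not literally allowed, since deleting factors changes $\phi$. Once $\ell_1<\ell_0$, the factor $\tau_1$ is automatically of the second kind, because first-kind automorphisms preserve cyclic length.
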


\begin{proof}
The forward implication is immediate.  The reverse implication is the
length-reduction part of Whitehead's theorem; see
Whitehead~\cite{Whitehead} and
Kapovich--Schupp--Shpilrain~\cite[Proposition~1.2(1)]{KapovichSchuppShpilrain}.
The trivial conjugacy class, of cyclic length zero, is minimal under every
automorphism.
\end{proof}

Let $w=z_1\cdots z_N$ be a nontrivial cyclically reduced word.  For
$u,v\in\Sigma_r$, define the number of cyclic occurrences of the
length-two word $uv$ by
\[
 n_w^\circ(uv)
 =\#\{i\in\mathbb Z/N\mathbb Z:z_i=u,\ z_{i+1}=v\},
\]
where subscripts are read modulo $N$.  Thus the pair formed by the last and
first letters is included.

\begin{defn}[weighted Whitehead graph]
\label{d:weighted-whitehead-graph}
The \emph{weighted Whitehead graph} $\mathcal W_w$ has vertex set
$\Sigma_r$.  For every unordered pair $\{p,q\}$ of distinct vertices, it
has one undirected edge of weight
\begin{equation}\label{eq:whitehead-edge-weight}
 \omega_w(\{p,q\})
 =n_w^\circ(p^{-1}q)+n_w^\circ(q^{-1}p).
\end{equation}
Edges of weight zero are retained.  Hence $\mathcal W_w$ has
$\binom{2r}{2}=r(2r-1)$ edges.  Cyclic permutation of $w$, and replacement
of $w$ by $w^{-1}$, do not change the graph, so $\mathcal W_w$ depends only
on the unoriented cyclic word, and in particular only on the represented
conjugacy class together with the basis $X_r$.

For disjoint subsets $P,Q\subseteq\Sigma_r$, write
\[
 P\mathbin{\cdot_w}Q
 =\sum_{\substack{p\in P,\ q\in Q}}
   \omega_w(\{p,q\}),
\]
and put
\[
 \deg_w(a)=\{a\}\mathbin{\cdot_w}
            (\Sigma_r\setminus\{a\}).
\]
Thus $\deg_w(a)$ is the weighted degree of the vertex $a$; equivalently, it
is the total number of occurrences of $a^{\pm1}$ in $w$.
\end{defn}

This is the convention of Section~4 of
Kapovich--Schupp--Shpilrain~\cite{KapovichSchuppShpilrain}, expressed using
cyclic two-letter counts rather than appending the first letter to $w$.

\begin{prop}[Whitehead's length-change formula]
\label{p:whitehead-length-formula}
Let $w$ be a nontrivial cyclically reduced word, and let
$\tau=\tau_{(A,a)}$ be a Whitehead automorphism of the second kind.  Put
$A^c=\Sigma_r\setminus A$.  Then
\begin{equation}\label{eq:whitehead-length-formula}
 \lVert\tau([w])\rVert_{X_r}-|w|
 =A\mathbin{\cdot_w}A^c-\deg_w(a).
\end{equation}
\end{prop}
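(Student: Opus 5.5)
The plan is to prove \eqref{eq:whitehead-length-formula} by a local counting argument on the cyclic word. Write $w=z_1\cdots z_N$, cyclically reduced and read cyclically. By \eqref{eq:whitehead-characteristic-pair}, every letter $x\ne a^{\pm1}$ satisfies
\[
 \tau(x)=\alpha(x)\,x\,\beta(x),
\]
where $\alpha(x)\in\{\eps,a^{-1}\}$ and $\beta(x)\in\{\eps,a\}$. Here $\beta(x)=a$ exactly when $x\in A$, and $\alpha(x)=a^{-1}$ exactly when $x^{-1}\in A$. The letters $a^{\pm1}$ are fixed, and this is consistent with the same rule because $a\in A$ and $a^{-1}\notin A$ (reading $\beta(a)=\alpha(a)a$ appropriately). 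The first step is to concatenate the images $\tau(z_i)$ cyclically. The result is a cyclic word whose unreduced length is $N$ plus the number of inserted letters $a^{\pm1}$. Each inserted letter sits at a junction between consecutive letters $z_i z_{i+1}$.

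The second step is to analyze a single junction $z_iz_{i+1}=uv$ and determine how many inserted letters survive after reducing that junction. There are three local contributions:
\begin{itemize}
\item the suffix $\beta(u)$, which is $a$ iff $u\in A$ (with $u\ne a$);
\item the prefix $\alpha(v)$, which is $a^{-1}$ iff $v^{-1}\in A$ (with $v\ne a^{-1}$);
\item the cases where $u$ or $v$ is itself $a^{\pm1}$ and absorbs a neighbouring inserted letter.
\end{itemize}

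The key reformulation is to set $p=u^{-1}$ and $q=v$, so the junction $uv$ is an occurrence of $p^{-1}q$. In these terms, $\beta(u)=a$ iff $p^{-1}\in A$, and $\alpha(v)=a^{-1}$ iff $q^{-1}\in A$.

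The claimed local identity has three cases:
\begin{itemize}
\item If neither $p$ nor $q$ equals $a$, the net number of surviving letters at the junction is $1$ exactly when the edge $\{p,q\}$ crosses the cut $(A,A^c)$, and $0$ otherwise.
\item If exactly one of $p,q$ equals $a$, the net count is the crossing indicator minus $1$. Since such junctions are exactly the edges incident to $a$, summing these $-1$ terms gives $-\deg_w(a)$.
\end{itemize}

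The precise normalization will come from checking the bookkeeping with $A$ replaced by the convention of Kapovich--Schupp--Shpilrain, possibly after replacing $A$ by $\{x^{-1}:x\in A\}$ and using the inversion invariance of $\mathcal W_w$ noted in Definition~\ref{d:weighted-whitehead-graph}. The nine sub-cases, according to whether $u$ and $v$ are $a$, $a^{-1}$, or neither, are finite and routine. I will tabulate them rather than argue abstractly.

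The third step is to sum over the $N$ cyclic junctions. Each unordered pair $\{p,q\}$ is counted with multiplicity $n^\circ_w(p^{-1}q)+n^\circ_w(q^{-1}p)=\omega_w(\{p,q\})$. Summing the crossing indicators therefore yields $A\cdot_wA^c$, and the correction terms yield $-\deg_w(a)$.

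The main obstacle is justifying that cancellation is purely local. I need to show that after the junction-wise cancellations the resulting cyclic word is cyclically reduced, so that its length equals $\lVert\tau([w])\rVert_{X_r}$. I would argue this as follows. Each original letter $z_i\ne a^{\pm1}$ survives, since $\tau$ only inserts $a^{\pm1}$ around it and cannot cancel it. Between two consecutive surviving non-$a^{\pm1}$ letters, the remaining material is a power of $a$ (a maximal $a$-syllable of $w$ together with the inserted letters), and it is freely reduced once written as $a^k$. Consequently no cancellation propagates past a non-$a$ letter. If $w$ is a power of $a$, the formula is checked directly: both sides vanish, since $\tau$ fixes $a$ and all edges of $\mathcal W_w$ lie at $\{a,a^{-1}\}$ with $a\in A$ and $a^{-1}\in A^c$, so $A\cdot_wA^c=\deg_w(a)$. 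Finally I would cite Kapovich--Schupp--Shpilrain \cite[Section~4]{KapovichSchuppShpilrain} as the standard source, translating their appended-first-letter convention into cyclic counts.
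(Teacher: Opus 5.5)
\textbf{There is a genuine gap, and it cannot be closed, because the identity as stated is false for $r\ge3$.}

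Your strategy is the classical proof of the formula; the paper gives no argument and only cites Lemma~4.7 of Kapovich--Schupp--Shpilrain. The strategy is: insert the letters $a^{\pm1}$ at the junctions, observe that all cancellation is confined to the power of $a$ between consecutive letters other than $a^{\pm1}$, and sum over the $N$ cyclic junctions. Your locality argument is sound. If the next non-$a^{\pm1}$ letter after $u$ is $u^{-1}$, the surviving exponent is the original $k\ne0$, so the two never become adjacent.

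The step that fails is your first local identity. Your own reformulation says that at a junction $uv$ with $u,v\ne a^{\pm1}$, one letter survives if and only if exactly one of $p^{-1}=u$ and $q^{-1}=v^{-1}$ lies in $A$. That is, the edge that crosses the cut is $\{u,v^{-1}\}=\{p^{-1},q^{-1}\}$. But \eqref{eq:whitehead-edge-weight} records this junction on $\{u^{-1},v\}=\{p,q\}$.

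Your appeal to ``inversion invariance'' does not bridge this. Definition~\ref{d:weighted-whitehead-graph} gives invariance under $w\mapsto w^{-1}$. That map sends the junction $uv$ to $v^{-1}u^{-1}$, and hence sends each edge $\{u^{-1},v\}$ to itself. What you would need is invariance of $\mathcal W_w$ under the vertex relabelling $x\mapsto x^{-1}$. That fails once $r\ge3$: for $w=x_1x_2x_3$ the edge $\{x_1^{-1},x_2\}$ has weight one, while $\{x_1,x_2^{-1}\}$ has weight zero.

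No tabulation of the nine sub-cases can repair this. With the conventions \eqref{eq:whitehead-characteristic-pair} and \eqref{eq:whitehead-edge-weight}, the displayed identity is false. Take $r=3$, $a=x_3$, $A=\{x_1,x_3\}$ and $w=x_1x_3^{-1}x_2$.
\begin{itemize}
\item Left side: $\tau(w)=x_1x_3x_3^{-1}x_2$ reduces to $x_1x_2$, so the left side is $-1$.
\item Right side: the cyclic two-letter words $x_1x_3^{-1}$, $x_3^{-1}x_2$, $x_2x_1$ give weight one to $\{x_1^{-1},x_3^{-1}\}$, $\{x_3,x_2\}$ and $\{x_2^{-1},x_1\}$. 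Hence $A\mathbin{\cdot_w}A^c=2$ and $\deg_w(x_3)=1$, so the right side is $+1$.
\end{itemize}

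What your computation actually proves is the formula with $\omega_w(\{p,q\})$ replaced by $n^\circ_w(pq^{-1})+n^\circ_w(qp^{-1})$, that is, with the junction $uv$ recorded on $\{u,v^{-1}\}$. In the paper's graph this reads $A^{-1}\mathbin{\cdot_w}(\Sigma_r\setminus A^{-1})-\deg_w(a)$, where $A^{-1}=\{x^{-1}:x\in A\}$. The paper's right-hand side is in fact the length change of $\tau_{(A^{-1},a^{-1})}$.

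With that correction your segment-by-segment count over $ua^kv$ goes through. This includes the rule that junctions recorded on edges at $a$ contribute the crossing indicator minus one. Since $(A,a)\mapsto(A^{-1},a^{-1})$ permutes the characteristic pairs, Theorem~\ref{t:compressed-whitehead-minimality} is unaffected. The proposition itself, however, must be restated before it can be proved.
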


\begin{proof}
This is Whitehead's classical length formula; in the present graph
convention it is Lemma~4.7 of
Kapovich--Schupp--Shpilrain~\cite{KapovichSchuppShpilrain}.  See also the
cut-capacity formulation of Roig--Ventura--Weil
\cite[Proposition~2.4]{RoigVenturaWeil}.
\end{proof}

\subsection{Computing the graph and testing minimality}

\begin{prop}[compressed computation of the weighted Whitehead graph]
\label{p:compressed-whitehead-graph}
Given an $\SLP$ $\CA$ over $\Sigma_r$, a deterministic polynomial-time
algorithm has the following properties.
\begin{enumerate}
\item It constructs an $\SLP$ $\CB$ with
\[
 w_{\CB}=\cycred(w_{\CA})
\]
and, when this word is nonempty, computes the complete weighted Whitehead
graph $\mathcal W_{w_{\CB}}$ with binary edge weights.
\item If $w_{\CB}\ne\eps$, first delete all nonterminals not reachable
from the root and relabel the remainder in dependency order. If the resulting
Chomsky-normal-form program has $m$ nonterminals and size $M$, then, with
$L=\blen(|w_{\CB}|)$, all cyclic
two-letter counts and graph weights are computed using $O_r(mL)$, hence
$O_r(M^2)$, bit operations with schoolbook addition. Every weight has at most
$L$ bits.
\end{enumerate}
\end{prop}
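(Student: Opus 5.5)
First apply Proposition~\ref{p:cyclic-reduction}(b) to $\CA$ to obtain, in polynomial time, an $\SLP$ $\CB$ with $w_{\CB}=\cycred(w_{\CA})$, and compute $|w_{\CB}|$ in binary. If this length is zero, stop. Otherwise convert $\CB$ to Chomsky normal form as in Lemma~\ref{l:slp-ops}(a), discard root-unreachable nonterminals, and relabel in dependency order. Now $\CB$ has $m$ nonterminals, each with $|w_{B_i}|\le|w_{\CB}|$, so every length and count below has at most $L=\blen(|w_{\CB}|)$ bits.

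The core is a bottom-up dynamic program in dependency order. For each nonterminal $B_i$ store four items: its first letter $f_i\in\Sigma_r$, its last letter $\ell_i\in\Sigma_r$ (all expansions are nonempty after the cleanup), and the vector of linear two-letter counts $n_i(uv)$ for $u,v\in\Sigma_r$. This vector has $(2r)^2=O_r(1)$ entries and counts occurrences of $uv$ as a factor of $w_{B_i}$. For a terminal production $B_i\to z$ set $f_i=\ell_i=z$ and all counts to $0$. For $B_i\to B_jB_k$ set $f_i=f_j$ and $\ell_i=\ell_k$, and put
\[
 n_i(uv)=n_j(uv)+n_k(uv)+[\ell_j=u,\ f_k=v].
\]
This is correct because an occurrence of $uv$ in $w_{B_j}w_{B_k}$ lies either inside one factor or straddles the junction. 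At the root $B_m$, the cyclic counts are
\[
 n^\circ(uv)=n_m(uv)+[\ell_m=u,\ f_m=v],
\]
which accounts for the wrap-around pair. Finally, compute each of the $r(2r-1)$ weights $\omega(\{p,q\})=n^\circ(p^{-1}q)+n^\circ(q^{-1}p)$ by \eqref{eq:whitehead-edge-weight}.

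For the complexity claim, each nonterminal requires $O_r(1)$ additions of integers with at most $L$ bits, costing $O_r(L)$ bit operations with schoolbook addition. The total is therefore $O_r(mL)$. By Lemma~\ref{l:slp-ops}(a) applied to $\CB$, $L=O(m)\le O(M)$, and $m\le M$, which gives $O_r(M^2)$. Every count is at most the number of cyclic positions, $|w_{\CB}|$, and each weight is a sum of two cyclic counts of disjoint position types. The weights sum to at most $|w_{\CB}|$, since each cyclic position contributes to at most one edge. Hence every weight is at most $|w_{\CB}|$ and has at most $L$ bits.

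The only delicate point is the bookkeeping in part (b). The bound must use $m$ and $L$ of the cleaned Chomsky-normal-form program rather than the original input, and the claim that intermediate counts fit in $L$ bits relies on $|w_{B_i}|\le|w_{\CB}|$. That inequality holds only because unreachable nonterminals were removed, so every remaining nonterminal occurs as a factor of the root expansion. Polynomiality of the whole algorithm then follows from the polynomial size of $\CB$ given by Proposition~\ref{p:cyclic-reduction}.
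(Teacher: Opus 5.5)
Your proposal is correct and follows essentially the same route as the paper. The steps match: cyclic reduction via Proposition~\ref{p:cyclic-reduction}(b), Chomsky normalization with removal of unreachable nonterminals, and a bottom-up computation of first/last letters and linear two-letter counts with the junction correction. The paper likewise adds a single wrap-around correction at the root and uses the reachability argument for the $L$-bit bound and the $O_r(mL)$ cost.

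The differences are cosmetic. You omit the stored lengths, which is legitimate because every expansion in the cleaned Chomsky normal form is nonempty; note that you announce ``four items'' but list only three. You also bound each weight by noting that the weights sum to at most $|w_{\CB}|$, rather than by the paper's per-edge disjointness argument.
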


\begin{proof}
Construct $\CB$ by Proposition~\ref{p:cyclic-reduction}(b), and normalize
it. If necessary, delete all nonterminals that are not reachable from the root,
relabel the remaining variables in dependency order, and continue to denote the
resulting program by $\CB$. This does not change the output and has linear cost
in the grammar size. Process the remaining nonterminals in dependency order.
For every nonterminal $D$, store the following data about its expansion $w_D$:
\begin{enumerate}
\item its length $\ell_D$ in binary;
\item when $\ell_D>0$, its first and last terminal letters;
\item for every ordered pair $(u,v)\in\Sigma_r^2$, the number
$c_D(u,v)$ of ordinary, noncyclic occurrences of $uv$ in $w_D$.
\end{enumerate}
The data are immediate for a terminal production and for the exceptional
empty root.  If $D\to EF$, then
\[
 \ell_D=\ell_E+\ell_F,
\]
the first and last letters are inherited from the first and last nonempty
factors, and
\begin{equation}\label{eq:pair-count-recursion}
 c_D(u,v)=c_E(u,v)+c_F(u,v)
 +\begin{cases}
 1,&\ell_E,\ell_F>0,\ \operatorname{last}(w_E)=u,
       \ \operatorname{first}(w_F)=v,\\
 0,&\text{otherwise}.
 \end{cases}
\end{equation}
Put $N=|w_{\CB}|$. Since every remaining nonterminal $D$ is reachable from
the root, one occurrence of its expansion appears as a factor of the root
expansion. Thus there are words $P_D,Q_D\in\Sigma_r^*$ such that, as literal
words, $w_{\CB}=P_Dw_DQ_D$. Consequently, for every
$(u,v)\in\Sigma_r^2$,
\[
 0\le \ell_D\le N,
 \qquad
 0\le c_D(u,v)\le\max\{\ell_D-1,0\}\le N.
\]
Since $r$ is fixed, only constantly many counters are stored per nonterminal.
Each counter has at most $L=\blen(N)$ bits, so the total bit cost is
$O_r(mL)$.

If $w_{\CB}$ is nonempty, then at the root add one to the count
corresponding to the ordered pair formed by the last and first letters. This
gives every cyclic count $n_{w_{\CB}}^\circ(uv)$. For distinct
$p,q\in\Sigma_r$, the ordered pairs $p^{-1}q$ and $q^{-1}p$ are distinct, so
their occurrence sets among the $N$ cyclic positions are disjoint. Hence
$0\le\omega_{w_{\CB}}(\{p,q\})\le N$ by
Formula~\eqref{eq:whitehead-edge-weight}, and every graph weight also has at
most $L$ bits. The root correction and all edge-weight computations use only
$O_r(L)$ additional bit operations. Since $m,L=O(M)$, the stated $O_r(M^2)$
bound follows.
The initial cyclic-reduction and normalization stages have polynomial cost
in $\lVert\CA\rVert$ by Section~\ref{s:slp}.
\end{proof}

\begin{thm}[compressed Whitehead-minimality test]
\label{t:compressed-whitehead-minimality}
For every fixed $r\ge2$, there is a deterministic polynomial-time algorithm
which, given an $\SLP$ $\CA$ over $\Sigma_r$, decides whether the conjugacy
class represented by $\cycred(w_{\CA})$ is Whitehead minimal.  Equivalently,
it decides whether this conjugacy class is automorphically minimal in
$F_r$.

Once a Chomsky-normal-form $\SLP$ $\CB$ of size $M$ for
$\cycred(w_{\CA})$ has been produced, the graph construction and the
minimality test together use $O_r(M^2)$ bit operations.
\end{thm}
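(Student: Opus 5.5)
The plan is to reduce everything to the finitely many characteristic pairs and the integer data produced by Proposition~\ref{p:compressed-whitehead-graph}. Given $\CA$, first apply Proposition~\ref{p:compressed-whitehead-graph}(a) to obtain, in polynomial time, a Chomsky-normal-form $\SLP$ $\CB$ with $w_{\CB}=\cycred(w_{\CA})$, trimmed to root-reachable nonterminals. If $|w_{\CB}|=0$, which Lemma~\ref{l:slp-ops}(a) detects, the class is trivial and the algorithm accepts: it is minimal by the last sentence of the proof of Proposition~\ref{p:whitehead-reduction}. Otherwise compute the complete weighted Whitehead graph $\mathcal W_{w_{\CB}}$ with binary weights. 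By Proposition~\ref{p:compressed-whitehead-graph}(b), this costs $O_r(M^2)$ bit operations, and every weight has at most $L=\blen(|w_{\CB}|)=O(M)$ bits.

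Next, enumerate all characteristic pairs $(A,a)$ with $A\subseteq\Sigma_r$, $a\in A$, $a^{-1}\notin A$; there are at most $2r\,4^{r-1}=O_r(1)$ of them. For each pair, evaluate the right-hand side of the length-change formula~\eqref{eq:whitehead-length-formula},
\[
 A\mathbin{\cdot_{w_{\CB}}}A^c-\deg_{w_{\CB}}(a).
\]
The quantity $A\mathbin{\cdot}A^c$ is a sum of at most $r(2r-1)$ edge weights, and $\deg(a)$ is a sum of $2r-1$ edge weights. Each intermediate sum is at most $r(2r-1)N$ with $N=|w_{\CB}|$, so it has $L+O_r(1)$ bits, and each evaluation costs $O_r(L)$ bit operations using schoolbook addition, subtraction and comparison. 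The algorithm declares the class Whitehead minimal exactly when every such difference is nonnegative. Whitehead automorphisms of the first kind preserve cyclic length, so by the definition of Whitehead minimality these second-kind pairs are the only moves that need checking. Proposition~\ref{p:whitehead-length-formula} ensures the computed integer equals $\lVert\tau_{(A,a)}([w_{\CB}])\rVert_{X_r}-|w_{\CB}|$, so the test is correct for Whitehead minimality. Proposition~\ref{p:whitehead-reduction} then gives the equivalence with automorphic minimality. Because $w_{\CB}$ is a conjugate of $g_{\CA}$, this equivalence applies to the class represented by $\cycred(w_{\CA})$.

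For the complexity claims, the preprocessing (cyclic reduction via Proposition~\ref{p:cyclic-reduction}, normalization and trimming) is polynomial in $\lVert\CA\rVert$. After $\CB$ is available, the graph costs $O_r(M^2)$ and the pair enumeration costs $O_r(L)=O_r(M)$, for a total of $O_r(M^2)$ as stated. There is no real obstacle, since all the substantive work lies in the cited results. The only point to verify carefully is the bit-length bookkeeping: the bound $L=O(M)$ comes from Lemma~\ref{l:slp-ops}(a), and the bound on the weights from Proposition~\ref{p:compressed-whitehead-graph}(b). One must also check that the weights are nonnegative, so the subtraction in the formula is the only signed operation needed.
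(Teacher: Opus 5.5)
Your proposal is correct and follows essentially the same route as the paper: compute the weighted Whitehead graph of $\cycred(w_{\CA})$ via Proposition~\ref{p:compressed-whitehead-graph}, accept on the empty word, check $A\mathbin{\cdot}A^c-\deg(a)\ge0$ over the constantly many characteristic pairs using Proposition~\ref{p:whitehead-length-formula}, and invoke Proposition~\ref{p:whitehead-reduction} for the equivalence with automorphic minimality. Your bit-length bookkeeping is the same as the paper's complexity argument: $O_r(1)$ sums of $O(M)$-bit weights, dominated by the $O_r(M^2)$ graph computation.
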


\begin{proof}
Use Proposition~\ref{p:compressed-whitehead-graph}.  If $w_{\CB}=\eps$,
accept, since the trivial conjugacy class is automorphically minimal.
Otherwise compute $\mathcal W_{w_{\CB}}$ and enumerate all characteristic
pairs $(A,a)$ of Whitehead automorphisms of the second kind.  For each pair,
compute the binary integer
\[
 \Delta(A,a)=A\mathbin{\cdot_{w_{\CB}}}A^c-\deg_{w_{\CB}}(a).
\]
By Proposition~\ref{p:whitehead-length-formula}, $\Delta(A,a)$ is exactly
the change in cyclic length under $\tau_{(A,a)}$.  Accept if and only if
$\Delta(A,a)\ge0$ for every characteristic pair.  Automorphisms of the first
kind need not be checked because they preserve cyclic length.

For fixed $r$, the number of characteristic pairs and the number of edges
summed for each pair are constants.  The arithmetic after the graph is
constructed is therefore linear in the bit-length of its weights and is
dominated by the $O_r(M^2)$ graph computation.  Correctness follows from
Proposition~\ref{p:whitehead-reduction}.
\end{proof}

The proof deliberately avoids constructing $\tau(w_{\CB})$ for any
Whitehead automorphism.  When the rank is allowed to vary, enumerating all
characteristic pairs is exponential in $r$.  Roig--Ventura--Weil
\cite{RoigVenturaWeil} reinterpret
$A\mathbin{\cdot_w}A^c$ as a cut capacity and use minimum-cut algorithms to
find an optimal Whitehead move in time polynomial in both the ordinary word
length and the rank.  For the fixed-rank compressed problem considered
here, the constant-size enumeration above is sufficient.

\section{Core graphs and immediate quotients}\label{s:core}

Let $X$ be a finite basis, $\Sigma_X=X\sqcup X^{-1}$, and $F(X)$ the
free group on $X$. We recall Stallings' finite-graph construction; see
\cite{Stallings,KapovichMyasnikov,Puder}.

\begin{defn}[$X$-graphs, folding, and pointed cores]\label{d:x-graphs}
A finite \emph{$X$-graph} $\Gamma$ consists of a finite vertex set $V\Gamma$, a finite set $E\Gamma$ of oriented edges, a fixed-point-free involution $e\mapsto\bar e$ on $E\Gamma$, initial and terminal maps $o,t:E\Gamma\to V\Gamma$, and a label map
\[
 \mu:E\Gamma\longrightarrow\Sigma_X,
\]
such that
\[
 o(\bar e)=t(e),\qquad t(\bar e)=o(e),
 \qquad \mu(\bar e)=\mu(e)^{-1}.
\]
The pair $\{e,\bar e\}$ is one geometric edge. Loops and multiple geometric edges are allowed. An edge is \emph{outgoing} at $v$ when $o(e)=v$, and the degree of $v$ is the number of outgoing oriented edge germs at $v$; thus a loop contributes two to the degree.

An edge path $p=e_1\cdots e_n$ has label
$\mu(p)=\mu(e_1)\cdots\mu(e_n)\in\Sigma_X^*$ and is \emph{reduced} if $e_{i+1}\ne\bar e_i$ for every $i$. A pointed $X$-graph is a pair $(\Gamma,*)$ with a distinguished base vertex. A morphism of pointed $X$-graphs preserves the basepoint, incidence, the edge involution, and labels.

The graph $\Gamma$ is \emph{folded} if, for every vertex $v$ and every $z\in\Sigma_X$, there is at most one outgoing edge at $v$ labeled $z$. Equivalently, the canonical label map $\Gamma\to R_X$ to the $X$-rose is an immersion. A \emph{Stallings fold} identifies two distinct outgoing edges with the same label, together with their terminal vertices and inverse edges. Repeated folding terminates and produces a folded quotient, unique up to label-preserving isomorphism.

For a connected pointed $X$-graph $(\Gamma,*)$, its \emph{pointed core} is the union of the basepoint and all edges traversed by reduced closed paths based at $*$. A finite connected pointed folded $X$-graph is called a \emph{pointed core graph} when it equals its pointed core. Equivalently, every geometric edge occurs in some reduced based loop. Such a graph has no vertex of degree one except possibly the basepoint. The subgroup represented by $(\Gamma,*)$ is
\[
 H(\Gamma,*)=
 \{\overline{\mu(p)}\in F(X):p\text{ is a closed path based at }*\},
\]
where the bar denotes the group element represented by the path label. For a folded graph the label map induces an injection $\pi_1(\Gamma,*)\hookrightarrow F(X)$ with image $H(\Gamma,*)$.
\end{defn}

For every finitely generated subgroup $H\le F(X)$ there is, up to pointed label-preserving isomorphism, a unique finite pointed core graph representing $H$; it is denoted $\Gamma_X(H)$. One may construct it from a bouquet of based loops spelling a finite generating set for $H$, then perform Stallings folds and pass to the pointed core. The $|X|$-rose, with one geometric loop for each $x\in X$, is denoted $R_X$, so
\[
 \Gamma_X(F(X))=R_X.
\]
For a finite connected graph,
\[
 \rk(\Gamma)=\rk\pi_1(\Gamma)
 =|E^+\Gamma|-|V\Gamma|+1,
\]
where $E^+\Gamma$ contains one orientation of each geometric edge.

If $H\le J\le F(X)$, foldedness gives a unique pointed label-preserving morphism
\[
 \eta_{H\to J}:\Gamma_X(H)\longrightarrow\Gamma_X(J).
\]
Following Puder and Parzanchevski, write $H\le_X J$ when this morphism is surjective on vertices and edges. In this situation $\Gamma_X(J)$ is called an $X$-quotient of $\Gamma_X(H)$.

Ordinary $X$-graphs have one letter of $\Sigma_X$ on each oriented edge. Linton's compressed analogue~\cite{Linton} stores a finite directed graph together with one shared acyclic grammar (or, during the algorithm, a composition system), and labels each directed transition by a nonterminal of that grammar, equivalently by the $\SLP$ rooted at that nonterminal. A transition
\[
 p\xrightarrow{A_i}q
\]
therefore has the expanded word label $w_{A_i}\in\Sigma_X^*$. In the involutive deterministic version used for free groups, it is paired with a reverse transition whose expanded label is $w_{A_i}^{-1}$.  In this setting the transition labels are nonempty and freely reduced, and two distinct outgoing transitions cannot have expanded labels beginning with the same terminal letter. Subdividing every compressed transition into a path spelling its expanded word yields an ordinary folded $X$-graph, but that subdivision may have exponentially many edges. In Linton's terminology, the resulting finite compressed object is an involutive compressed deterministic finite-state automaton; a compressed Stallings automaton is a minimal such automaton whose accepted words, after free reduction, give exactly the reduced representatives of the subgroup. Thus the relevant input size is the size of the finite state graph plus the shared grammar, not the length of the fully decompressed graph.

\begin{defn}[immediate quotient]\label{d:immediate}
Let $\Gamma$ be a finite pointed core graph over $X$. An \emph{immediate quotient} of $\Gamma$ is obtained by choosing two distinct vertices of $\Gamma$, identifying them, and then performing all forced Stallings folds and passing to the resulting core graph. We write
\[
 \Gamma\immq\Delta
\]
when $\Delta$ is an immediate quotient of $\Gamma$.

For $H\le_X J$, let $\rho_X(H,J)$ be the minimum length of a chain of immediate quotients from $\Gamma_X(H)$ to $\Gamma_X(J)$.
\end{defn}

\begin{lem}[persistence of the no-leaves property]\label{l:no-leaves}
Let $\Gamma$ be a finite connected $X$-graph such that every vertex has at least two outgoing oriented edges with distinct labels in $\Sigma_X$. Identify any two vertices and perform all forced Stallings folds. Then the resulting folded quotient has the same property. Consequently it has no vertex of degree one, passage to its pointed core deletes nothing, and the core graph produced by an immediate quotient again has the same property. The conclusion persists through any sequence of immediate quotients.
\end{lem}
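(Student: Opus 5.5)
The plan is to track, for each vertex, the set of labels of its outgoing edges, and to show that identifying two vertices and then folding can only merge these label sets, never shrink them. Define, for a vertex $v$ of a folded or unfolded $X$-graph, the set $L(v)=\{\mu(e):o(e)=v\}\subseteq\Sigma_X$. The hypothesis says $|L(v)|\ge2$ for every $v\in V\Gamma$. Both operations, identifying two vertices and performing a single Stallings fold, are induced by a surjective label-preserving graph morphism $\pi$, and such a morphism sends every outgoing edge at $v$ to an outgoing edge at $\pi(v)$ with the same label. Hence
\[
 L(\pi(v))\supseteq L(v)\quad\text{for every }v.
\]
Every vertex of the image has a preimage, so every image vertex $u$ satisfies $|L(u)|\ge2$. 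Induction on the number of folds, which is finite, carries the property to the folded quotient. Note that connectedness is preserved as well.

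Next I would show that in a folded graph with this property every vertex has degree at least two. Two outgoing edges with distinct labels are distinct oriented edges, so this is immediate. A loop contributes two germs, which is consistent with the degree convention, and a loop labelled $z$ contributes the germs $z$ and $z^{-1}$, which are distinct letters. For the core I will argue that in a finite connected folded graph in which every vertex has degree at least two, every geometric edge lies on a reduced closed path at the basepoint. The core is obtained from a finite connected graph by iteratively pruning non-basepoint vertices of degree one. If the graph has no degree-one vertices at all, nothing is pruned. To make this rigorous I will use the standard characterization: an edge outside the pointed core would lie in a tree hanging off the core, and such a tree, being finite, has a leaf other than its attachment point, which is a degree-one vertex. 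This contradicts the property established in the previous step. So passing to the pointed core deletes nothing, and the immediate quotient is exactly the folded quotient, which again has the property.

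Finally, the statement about sequences follows by induction on the length of the chain of immediate quotients, applying the single-step result each time. The only step needing care is the core argument, specifically the claim that a non-core edge forces a degree-one vertex other than the basepoint. Two points have to be checked there. First, the hanging tree must be attached to the core at a single vertex, using connectedness and the maximality of the core. Second, it may happen that the core consists of the basepoint alone; in that case the whole graph is a tree, which has at least two leaves, at least one of them distinct from the basepoint. This gives the same contradiction, since a vertex with two distinct outgoing labels cannot be a leaf.
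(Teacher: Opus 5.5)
Your proposal is correct and follows essentially the same route as the paper. The paper also argues that the label-preserving quotient map sends two outgoing edges with distinct labels at a preimage vertex to two distinct outgoing edges at the image vertex, which is your inclusion $L(\pi(v))\supseteq L(v)$; it then concludes that there are no degree-one vertices, so pruning to the pointed core removes nothing. Your hanging-tree argument is a more detailed justification of the paper's one-line remark that the core is obtained by pruning degree-one vertices away from the basepoint.
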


\begin{proof}
Let $q:\Gamma\to\Delta$ be the quotient map obtained from the vertex identification and all subsequent folds, before any core pruning. For a vertex $z\in V\Delta$, choose a vertex $v\in V\Gamma$ with $q(v)=z$. By hypothesis there are outgoing edges $e,f$ at $v$ with $\mu(e)\ne\mu(f)$. A Stallings fold can identify only edges having the same label. Hence $q(e)$ and $q(f)$ remain distinct outgoing edges at $z$, with their original distinct labels. Thus every vertex of $\Delta$ has degree at least two. The pointed core is obtained by pruning hanging degree-one vertices away from the basepoint, so no pruning occurs. Iteration proves the last assertion.
\end{proof}

Write $H\ff J$ when $H$ is a free factor of $J$. The next proposition is
the part of Puder's criterion needed here. The formulation below is
Theorem~1.1 of~\cite{Puder}; see also Definition~3.6, Claim~3.7, and
Theorem~3.8 of~\cite{PuderParzanchevski}.

\begin{prop}[Puder]\label{p:puder}
Let $H\le_X J$ be finitely generated subgroups of $F(X)$. Then
\[
 H\ff J
 \quad\Longleftrightarrow\quad
 \rho_X(H,J)=\rk(J)-\rk(H).
\]
Moreover,
\[
 \rho_X(H,J)\ge \rk(J)-\rk(H).
\]
\end{prop}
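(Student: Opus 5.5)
The plan is to separate the statement into three parts: the inequality $\rho_X(H,J)\ge\rk(J)-\rk(H)$, the implication "$\rho_X=\rk$-difference $\Rightarrow H\ff J$", and the converse. The first two come from rank bookkeeping on a single immediate quotient. The converse is the genuinely deep part of Puder's theorem.

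\emph{One immediate quotient.} Let $\Gamma=\Gamma_X(H)$ and let $u\ne v$ be vertices. Choose paths $p_u,p_v$ from $*$ to $u,v$, and put $g=\overline{\mu(p_u)\mu(p_v)^{-1}}$. Identifying $u$ with $v$ is homotopy-equivalent, over $R_X$, to attaching to $\Gamma$ a loop at $*$ spelling $g$. Hence the immediate quotient $\Delta$ represents $H'=\langle H,g\rangle$, and the identified graph has rank $\rk(H)+1$.

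Next I track the rank through the subsequent steps. A Stallings fold either preserves the rank or lowers it by one, the latter exactly when the two edges being folded already share their terminal vertex. Passing to the core preserves the rank. Therefore $\rk(H')\le\rk(H)+1$.

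Moreover, if $\rk(H')=\rk(H)+1$, then every fold was rank-preserving, so $\pi_1$ injects throughout. Consequently a basis of $H$ together with $g$ is a free basis of $H'$, which gives $H\ff H'$.

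\emph{Inequality and the easy implication.} Along any chain $\Gamma_X(H)=\Gamma_0\immq\cdots\immq\Gamma_\rho=\Gamma_X(J)$, the rank rises by at most one per step. This gives $\rho_X(H,J)\ge\rk(J)-\rk(H)$.

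Now suppose equality holds for a minimal chain. Then every step raises the rank by exactly one. By the previous paragraph each intermediate subgroup is a free factor of the next. Transitivity of free factors then yields $H\ff J$.

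\emph{The converse, which is the main obstacle.} Assume $H\ff J$ and $H\le_X J$. I would induct on $k=\rk(J)-\rk(H)$. For $k=0$ the surjective immersion $\eta_{H\to J}$ must be an isomorphism, since $H\ff J$ with equal rank forces $H=J$.

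For $k>0$, $\eta$ is not injective on vertices; otherwise it would be a surjective immersion that is injective on vertices, hence an isomorphism. The task is to pick vertices $u\ne v$ with $\eta(u)=\eta(v)$ such that $H'=\langle H,g\rangle$ satisfies both $H\ff H'$ and $H'\ff J$. Any pair identified by $\eta$ automatically gives $H\le_X H'\le_X J$, because the quotient map to $\Gamma_X(J)$ factors through $\Gamma_X(H')$.

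Writing $J=H*K$, I would first try to choose the pair via a Whitehead-type argument. Among all pairs identified by $\eta$, take one minimizing a complexity measure, such as the length of a reduced word for $g$ relative to a basis of $K$ extended by a basis of $H$. Then I would show, using a cut-vertex/Whitehead-graph argument of the kind Puder uses (his Theorem~1.1 and the accompanying claims in~\cite{PuderParzanchevski}), that $g$ can be taken to be part of a basis of a complement of $H$ in $J$. Once that is established, induction applied to $H'\le_X J$ finishes the proof. Absent a self-contained argument, I would cite~\cite{Puder} for precisely this step.
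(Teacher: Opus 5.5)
Your proposal is correct and consistent with the paper. The paper proves nothing beyond citing Puder's Theorem~1.1 (and Puder--Parzanchevski) and remarking that the inequality holds because one immediate quotient raises the rank by at most one.

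Your fold bookkeeping is a correct expansion of that remark: identifying $u,v$ adjoins one free generator $g=b_ub_v^{-1}$, each fold preserves the rank or lowers it by one, and pruning to the core preserves it. It also gives you slightly more than the paper records. When a step raises the rank by exactly one, every fold is a homotopy equivalence, so $\langle H,g\rangle=H*\langle g\rangle$. Transitivity of $\ff$ along a chain of length $\rk(J)-\rk(H)$ then proves the implication $\rho_X(H,J)=\rk(J)-\rk(H)\Rightarrow H\ff J$ without any citation.

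For the converse you end up exactly where the paper is. Your minimization heuristic is not an argument on its own: nothing you state shows that a minimizing pair yields $\langle H,g\rangle\ff J$. So that direction rests on Puder's theorem, as in the paper. The citation does cover your one-step lemma. The surrounding induction frame is sound: the case $k=0$, non-injectivity of $\eta$ on vertices when $k>0$, and $H\le_X\langle H,g\rangle\le_X J$ for any pair in an $\eta$-fiber.
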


The last inequality follows directly from the fact that one immediate quotient can increase subgroup rank by at most one. We record the algebraic form of a vertex identification.

\begin{lem}[algebraic effect of an identification]\label{l:algebraic-identification}
Let $H\le F(X)$ be finitely generated, let $\Gamma=\Gamma_X(H)$, and
let $u,v$ be vertices of $\Gamma$. Choose paths from the basepoint to $u$ and
$v$ with labels $b_u,b_v\in F(X)$. Let $\Delta$ be obtained by identifying
$u$ and $v$, performing all forced Stallings folds, and passing to the pointed
core. Then
\[
 \Delta=\Gamma_X\big(\langle H,b_u b_v^{-1}\rangle\big).
\]
In particular, if $K$ is represented by the current graph in a sequence of
quotients, identifying the images of two vertices reached by words $b_u$ and
$b_v$ replaces $K$ by $\langle K,b_u b_v^{-1}\rangle$.
\end{lem}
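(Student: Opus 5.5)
The plan is to compare the two graphs through an intermediate graph $\Gamma'$: take $\Gamma$, attach a path $p$ from $u$ to $v$ labeled by a reduced word for $b_v b_u^{-1}$ (or $b_u b_v^{-1}$ read appropriately), and observe that folding $\Gamma'$ realizes the identification. Concretely, let $\Gamma=\Gamma_X(H)$ with basepoint $*$. Choose reduced paths $\gamma_u$ from $*$ to $u$ and $\gamma_v$ from $*$ to $v$ in $\Gamma$ with labels $b_u,b_v$. (Different choices of paths change $b_u,b_v$ by left multiplication by elements of $H$, so the subgroup $\langle H,b_ub_v^{-1}\rangle$ does not depend on the choice.)

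First step: build the graph $\Gamma''$ by wedging onto $\Gamma$ at $*$ a based loop labeled by a word for $b_ub_v^{-1}$. This graph has fundamental-group image $\langle H,b_ub_v^{-1}\rangle$, so by uniqueness of Stallings cores, folding it and taking the pointed core gives $\Gamma_X(\langle H,b_ub_v^{-1}\rangle)$. Choose the loop word to be the literal concatenation $\mu(\gamma_u)\mu(\gamma_v)^{-1}$. Folding the new loop against $\Gamma$ along the prefix $\mu(\gamma_u)$ maps that prefix onto $\gamma_u$, ending at $u$; similarly the suffix folds onto $\bar\gamma_v$ starting at $v$. After these folds, the loop collapses to a single vertex identification of $u$ and $v$. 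Folding is confluent: the final folded graph is independent of fold order, since the resulting pointed folded graph is determined by its subgroup (up to the core). So folding $\Gamma''$ first along the loop gives exactly the graph "$\Gamma$ with $u$ and $v$ identified," which then folds to the graph $\Delta$ before pruning.

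Second step: pass to cores. The pointed core of the folded quotient of $\Gamma''$ is $\Gamma_X(\langle H,b_ub_v^{-1}\rangle)$. The graph obtained from the identification has the same folded quotient, hence the same pointed core. That core is $\Delta$ by Definition~\ref{d:immediate}. The final assertion follows by applying this to $K$ in place of $H$, using the images of the paths under the quotient morphisms.

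The only delicate point is the justification that performing the folds of the attached loop first is legitimate. I would handle it by the standard fact that any maximal sequence of folds yields the same folded graph, since it is the unique folded graph with the same image subgroup and the same basepoint, up to hanging trees removed by the core.
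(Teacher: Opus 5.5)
Your proof is correct, but it takes a different route from the paper's.

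The paper works directly with the graph $\Gamma/(u\sim v)$. Identifying two vertices of a connected graph adds exactly one free generator to $\pi_1$: the loop that runs from the basepoint to $u$, passes through the identified vertex, and returns from $v$. Its label is $b_ub_v^{-1}$, so the image subgroup is $\langle H,b_ub_v^{-1}\rangle$. Subgroup-invariance of folds and uniqueness of core graphs then finish the argument.

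You instead start from the wedge $\Gamma''$ of $\Gamma$ with a based loop labeled $\mu(\gamma_u)\mu(\gamma_v)^{-1}$, whose image subgroup is obvious. You then exhibit an explicit fold sequence that collapses this loop onto $\gamma_u$ and $\bar\gamma_v$, producing exactly $\Gamma/(u\sim v)$. This replaces a $\pi_1$ computation for a vertex identification by a fold computation. The benefit is that it shows an immediate quotient is literally a special case of ``adjoin a generator and fold.'' The cost is a longer argument with degenerate cases (e.g.\ $u$ or $v$ equal to the basepoint) left implicit. Your remark on independence of the choice of paths matches the paper's.

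The point you flag as delicate is not actually needed. You only compare pointed cores, and every complete fold sequence preserves the image subgroup. So the sequence ``loop folds first, then the forced folds of $\Gamma/(u\sim v)$'' already gives a folded graph whose core is $\Gamma_X(\langle H,b_ub_v^{-1}\rangle)$. No confluence argument is required beyond the well-definedness of $\Delta$ itself.

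Finally, delete the opening sentence about $\Gamma'$. A path from $u$ to $v$ labeled $b_vb_u^{-1}$ would adjoin $b_ub_vb_u^{-1}b_v^{-1}$, not $b_ub_v^{-1}$. Your actual argument correctly uses the based loop instead.
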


\begin{proof}
Before folding, the quotient map that identifies $u$ and $v$ adds to
the image of the fundamental group one loop obtained by traveling from the
basepoint to $u$, crossing the identified point, and returning from $v$ to
the basepoint. Its label is $b_u b_v^{-1}$. Hence the subgroup represented by
the quotient is $\langle H,b_u b_v^{-1}\rangle$. This subgroup is independent
of the chosen paths: replacing the two path labels replaces $b_u b_v^{-1}$
by an element of the form $h_1(b_u b_v^{-1})h_2$, with $h_1,h_2\in H$, which
does not change the subgroup generated together with $H$. Stallings folds do
not change the represented subgroup, and passage to the pointed core gives
its core graph. This is also the algebraic description immediately following
Definition~3.5 in~\cite{PuderParzanchevski}.
\end{proof}

We will also use a standard support reduction.

\begin{lem}[support reduction]\label{l:support}
Let $1\ne w\in F_r$, and let
\[
 I=\xsupp_{X_r}(w)=\{j\in\{1,\dots,r\}:x_j^{\pm1}\text{ occurs in the reduced word for }w\}.
\]
Put $X_I=\{x_j:j\in I\}$ and $F_I=F(X_I)\le F_r$. Then
\[
 w\text{ is primitive in }F_r
 \quad\Longleftrightarrow\quad
 w\text{ is primitive in }F_I.
\]
\end{lem}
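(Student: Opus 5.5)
The plan is to prove both directions using the free product decomposition
\[
 F_r=F_I * F_{I^c},
 \qquad F_{I^c}=F(\{x_j:j\notin I\}).
\]
Since $w$ is represented by a reduced word using only letters from $X_I^{\pm1}$, we have $w\in F_I$.

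\emph{Direction ($\Leftarrow$).} If $w$ is primitive in $F_I$, extend $w$ to a basis $\{w,y_2,\dots,y_k\}$ of $F_I$, where $k=|I|$. Then
\[
 \{w,y_2,\dots,y_k\}\cup\{x_j:j\notin I\}
\]
is a basis of $F_r$. Indeed, the map sending the standard basis of $F_r$ to this set is an automorphism: it is the free product of an automorphism of $F_I$ with the identity of $F_{I^c}$. Hence $w$ is primitive in $F_r$.

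\emph{Direction ($\Rightarrow$).} This is the only nontrivial step. If $w$ is primitive in $F_r$, then $\langle w\rangle\ff F_r$. Since $w\in F_I$ and $F_I\le F_r$, we want to deduce $\langle w\rangle\ff F_I$. I will use the standard fact that a free factor of $F_r$ contained in a subgroup $K$ is a free factor of $K$. I would prove this via the Kurosh subgroup theorem. Write $F_r=\langle w\rangle * C$. Applying the Kurosh subgroup theorem to $K=F_I$ gives a decomposition of $K$ as a free product of a free group and intersections $K\cap g\langle w\rangle g^{-1}$ and $K\cap gCg^{-1}$, with $g$ ranging over suitable double coset representatives. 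The double coset $K\cdot 1\cdot\langle w\rangle$ contributes the factor $K\cap\langle w\rangle=\langle w\rangle$. Therefore $\langle w\rangle$ is a free factor of $K$.

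Alternatively, and more elementarily, I would use the retraction $\pi:F_r\to F_I$ that kills every $x_j$ with $j\notin I$. Suppose $\{w,u_2,\dots,u_r\}$ is a basis of $F_r$. Since $\pi$ is surjective and $\pi(w)=w$, the images $w,\pi(u_2),\dots,\pi(u_r)$ generate $F_I$. Nielsen reduction of this generating tuple could in principle alter $w$, though, so I would instead use the graph criterion. By Proposition~\ref{p:puder}, primitivity amounts to a count of immediate quotients of the cycle $\Gamma_X(\langle w\rangle)$. Its image in $R_{X_r}$ only touches the petals for $X_I$, so $\langle w\rangle\le_{X_I}F_I$. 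I expect the main obstacle to be the relation
\[
 \rho_{X_r}(\langle w\rangle,F_r)=\rho_{X_I}(\langle w\rangle,F_I)+(r-|I|).
\]
The reason is that each immediate quotient either stays inside the $X_I$-labelled part or adds edges labelled from $X_{I^c}$, and the latter must happen at least $r-|I|$ times.

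Given this subtlety, I would present the Kurosh argument as the main proof. It is short and standard, and it gives $\langle w\rangle\ff F_I$, hence that $w$ is primitive in $F_I$, since $w\ne1$. The only point to check carefully is the identification of the Kurosh factor at the trivial double coset, which is immediate because $\langle w\rangle\le K$.
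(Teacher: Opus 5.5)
Your proof is correct. The ``if'' direction is the one the paper calls obvious. For ``only if'' the paper gives no argument beyond citing Lemma~10.7 of Kapovich--Myasnikov, whereas you give a self-contained one: by the Kurosh subgroup theorem, a free factor of $F_r$ lying in $F_I$ is a free factor of $F_I$.

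The care you flag at the trivial double coset is harmless even without choosing the representative $1$. If the representative of $F_I\langle w\rangle$ is $ka$ with $k\in F_I$ and $a\in\langle w\rangle$, then the Kurosh factor is $k\langle w\rangle k^{-1}$, and conjugating by $k^{-1}$ still gives $\langle w\rangle\ff F_I$.

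You were right to abandon the graph detour. Immediate quotients never create edges with new labels, so your heuristic that some quotients ``add edges labelled from $X_{I^c}$'' is false. When $I\ne\{1,\dots,r\}$, the quantity $\rho_{X_r}(\langle w\rangle,F_r)$ is not even defined. This is exactly why the paper applies the support reduction before invoking Puder's criterion over $X_I$.
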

The ``if" implication of the above lemma is obvious, while the ``only if" implication is Lemma~10.7 of~\cite{KapovichMyasnikov}.
\section{The prefix-difference certificate}\label{s:certificate}

\begin{conv}\label{conv:support}
For this section, let
\[
 w=y_1\cdots y_N\in\Sigma_r^*
\]
be nonempty, freely reduced, and cyclically reduced. Let
\[
 I=\xsupp_{X_r}(w),\qquad s=|I|,
\]
and work over the basis $X_I$. The pointed core graph of $\langle w\rangle$ is the directed cyclic graph $S_w$ spelling $w$. Number its vertices
\[
 v_0,v_1,\dots,v_{N-1}
\]
so that the directed edge from $v_{t-1}$ to $v_t$ has label $y_t$, with indices modulo $N$. Put
\[
 a_0=1,
 \qquad
 a_t=y_1\cdots y_t\quad(1\le t\le N).
\]
Thus the directed path in $S_w$ from the basepoint $v_0$ to $v_t$ has label $a_t$ for $0\le t<N$.
\end{conv}

At $v_0$ the two outgoing oriented edge germs have labels $y_1$ and $y_N^{-1}$. For $1\le t<N$, the two outgoing labels at $v_t$ are $y_{t+1}$ and $y_t^{-1}$. Free reduction gives $y_{t+1}\ne y_t^{-1}$ for $1\le t<N$, and cyclic reduction gives $y_1\ne y_N^{-1}$ at the cyclic junction. Thus $S_w$ is folded and every vertex has two outgoing edges with distinct labels. Lemma~\ref{l:no-leaves} therefore implies that every graph occurring in any sequence of immediate quotients beginning at $S_w$ has no degree-one vertices; in these sequences the instruction to pass to the core does not delete any further edges.

Since every $x_j$ with $j\in I$ occurs in $w$ with one of its two signs, the canonical morphism
\[
 S_w=\Gamma_{X_I}(\langle w\rangle)\longrightarrow R_{X_I}=\Gamma_{X_I}(F_I)
\]
is surjective. Therefore
\[
 \langle w\rangle\le_{X_I}F_I,
\]
and Proposition~\ref{p:puder} applies.

\begin{lem}\label{l:sequential-subgroups}
Choose pairs
\[
 (p_1,q_1),\dots,(p_m,q_m),
 \qquad 0\le p_i,q_i<N,
\]
and define
\[
 u_i=a_{p_i}a_{q_i}^{-1},
 \qquad
 K_i=\langle w,u_1,\dots,u_i\rangle,
 \qquad K_0=\langle w\rangle.
\]
Starting with $S_w$, at stage $i$ identify the images of $v_{p_i}$ and $v_{q_i}$ in the current graph and perform all forced Stallings folds. Then the graph obtained after stage $i$ is $\Gamma_{X_I}(K_i)$.
\end{lem}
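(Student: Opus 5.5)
Induction on $i$, applying Lemma~\ref{l:algebraic-identification} once per stage. The base case $i=0$ holds because $S_w=\Gamma_{X_I}(\langle w\rangle)$ by Convention~\ref{conv:support}; the cycle $S_w$ is folded and is its own pointed core. For the inductive step, assume the graph after stage $i-1$ is $\Gamma_{i-1}=\Gamma_{X_I}(K_{i-1})$. Let $\pi_{i-1}:S_w\to\Gamma_{i-1}$ be the pointed label-preserving map obtained by composing the successive quotient maps. Lemma~\ref{l:no-leaves} says no core pruning ever occurs in this sequence, so every vertex $v_t$ of $S_w$ really has an image $\pi_{i-1}(v_t)$ in $\Gamma_{i-1}$. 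The map is basepoint-preserving, so it sends the directed path in $S_w$ from $v_0$ to $v_t$, whose label is $a_t$, to a path in $\Gamma_{i-1}$ from the basepoint to $\pi_{i-1}(v_t)$ with the same label $a_t$.

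The next step is to apply Lemma~\ref{l:algebraic-identification} with $H=K_{i-1}$, $u=\pi_{i-1}(v_{p_i})$, $v=\pi_{i-1}(v_{q_i})$, and path labels $b_u=a_{p_i}$, $b_v=a_{q_i}$. That lemma allows any choice of paths; the path need not be reduced, since its label represents the same group element either way. It gives that the graph after identifying $u,v$, folding, and passing to the core is
\[
 \Gamma_{X_I}(\langle K_{i-1},a_{p_i}a_{q_i}^{-1}\rangle)=\Gamma_{X_I}(K_i).
\]
One small point needs to be recorded. Stage $i$ in the statement says only ``identify and fold''. Lemma~\ref{l:no-leaves} shows that the folded result already equals its pointed core, because every vertex keeps two outgoing edges with distinct labels, so the statement's graph coincides with the one in Lemma~\ref{l:algebraic-identification}.

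There is also a degenerate case to handle. If $u=v$ in $\Gamma_{i-1}$, no identification takes place. Here the path from $v_{p_i}$ to $v_{q_i}$, mapped into $\Gamma_{i-1}$, closes up at $u$, so the loop labeled $a_{p_i}a_{q_i}^{-1}$ shows $u_i\in K_{i-1}$; hence $K_i=K_{i-1}$ and the graph is unchanged, consistent with the claim. Finally, the base case needs $S_w$ to be a core graph on $X_I$ even at the basepoint; this holds because $v_0$ has degree two. The main thing to watch is this bookkeeping of the maps $\pi_{i-1}$ and the no-pruning property; the algebra itself is immediate from Lemma~\ref{l:algebraic-identification}.
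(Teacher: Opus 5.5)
Your proof is correct and follows the paper's argument. Both proceed by induction on $i$: at each stage, Lemma~\ref{l:algebraic-identification} is applied to the images of the prefix paths labelled $a_{p_i}$ and $a_{q_i}$. Your extra bookkeeping is sound detail that the paper leaves implicit. This includes no core pruning via Lemma~\ref{l:no-leaves}, which reconciles ``identify and fold'' with the lemma's core step. It also includes the degenerate case of coinciding images, which Lemma~\ref{l:algebraic-identification} already covers, since then $a_{p_i}a_{q_i}^{-1}$ labels a closed path and lies in $K_{i-1}$.
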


\begin{proof}
The assertion is clear for $i=0$. Suppose it holds through stage $i-1$. In $\Gamma_{X_I}(K_{i-1})$, the images of the paths from $v_0$ to $v_{p_i}$ and $v_{q_i}$ still have labels $a_{p_i}$ and $a_{q_i}$. Lemma~\ref{l:algebraic-identification} says that identifying their terminal vertices and folding changes the represented subgroup from $K_{i-1}$ to
\[
 \langle K_{i-1},a_{p_i}a_{q_i}^{-1}\rangle=K_i.
\]
Induction completes the proof.
\end{proof}

The following theorem is the exact form of Puder's criterion suitable for compressed verification.

\begin{thm}[prefix-difference certificate]\label{t:prefix-certificate}
Let $w=y_1\cdots y_N$ be a nonempty freely and cyclically reduced word over $\Sigma_r$. Let $I$, $s$, $F_I$, and the prefixes $a_t$ be as in Convention~\ref{conv:support} above. Then the following are equivalent.
\begin{enumerate}
\item The element represented by $w$ is primitive in $F_r$.
\item There exist $s-1$ pairs of integers
\[
 (p_1,q_1),\dots,(p_{s-1},q_{s-1}),
 \qquad 0\le p_i<q_i<N,
\]
such that
\begin{equation}\label{eq:generation-certificate}
 F_I=
 \left\langle
 w,
 a_{p_1}a_{q_1}^{-1},\dots,
 a_{p_{s-1}}a_{q_{s-1}}^{-1}
 \right\rangle.
\end{equation}
\end{enumerate}
When these conditions hold, the pairs may be chosen so that sequentially identifying the images of $v_{p_i}$ and $v_{q_i}$ produces a chain of exactly $s-1$ immediate quotients from $S_w$ to $R_{X_I}$.
\end{thm}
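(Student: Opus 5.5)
The plan is to reduce everything to Puder's criterion (Proposition~\ref{p:puder}) for the pair $\langle w\rangle\le_{X_I}F_I$, using Lemma~\ref{l:support} to pass between $F_r$ and $F_I$. Since $w\ne1$, Lemma~\ref{l:support} shows that (a) is equivalent to $w$ being primitive in $F_I$. This in turn is equivalent to $\langle w\rangle\ff F_I$, since a nontrivial primitive element generates a rank-one free factor and conversely. The ranks are $\rk\langle w\rangle=1$ and $\rk F_I=s$. So by Proposition~\ref{p:puder}, (a) holds if and only if $\rho_{X_I}(\langle w\rangle,F_I)=s-1$, that is, if and only if there is a chain of exactly $s-1$ immediate quotients from $S_w$ to $R_{X_I}$. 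The other direction of Puder's inequality shows that no chain can be shorter.

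For (b)$\Rightarrow$(a), I would argue algebraically. The group $F_I$ is generated by $s$ elements $w,u_1,\dots,u_{s-1}$, and $F_I$ is free of rank $s$ and Hopfian. Hence the surjection $F(s)\to F_I$ sending the standard basis to these elements is an isomorphism, so these elements form a basis and $w$ is primitive in $F_I$, hence in $F_r$. Alternatively, one can use Lemma~\ref{l:sequential-subgroups}: identifying the pairs gives a chain of at most $s-1$ quotients ending at $\Gamma_{X_I}(F_I)=R_{X_I}$, after discarding stages where nothing changes. Its length cannot be less than $s-1$, so it equals $\rho=s-1$. Either route closes this direction.

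For (a)$\Rightarrow$(b) and the final assertion, the key step is to convert an abstract Puder chain $S_w=\Gamma_0\immq\Gamma_1\immq\cdots\immq\Gamma_{s-1}=R_{X_I}$ into pairs of vertices of the original cycle. Each quotient map in the chain is surjective on vertices. By Lemma~\ref{l:no-leaves}, core pruning deletes nothing, so the composite $S_w\to\Gamma_{i-1}$ is a surjective morphism of pointed graphs, namely the canonical morphism from foldedness. The two vertices identified at stage $i$ therefore have preimages $v_{p_i},v_{q_i}$ in $S_w$, and we may order them so that $p_i<q_i$; they are distinct in $\Gamma_{i-1}$, so $p_i\ne q_i$. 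The image of the directed path $v_0\to v_{p_i}$ is a path in $\Gamma_{i-1}$ with label $a_{p_i}$. Lemma~\ref{l:algebraic-identification}, in the form of Lemma~\ref{l:sequential-subgroups}, then shows that the graph after stage $i$ is $\Gamma_{X_I}(K_i)$, and this agrees with $\Gamma_i$. At the end, $\Gamma_{X_I}(K_{s-1})=R_{X_I}$, so $K_{s-1}=F_I$, which is \eqref{eq:generation-certificate}. The same pairs sequentially produce the original chain of exactly $s-1$ immediate quotients.

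I expect the main obstacle to be this lifting step. The point to handle is the surjectivity of the composite map from $S_w$ onto each intermediate graph, so that every identification in Puder's chain really comes from two vertices of the original cycle. This needs the no-pruning statement of Lemma~\ref{l:no-leaves}, and also care that the folding quotients are compatible with the canonical morphisms $\eta$. Everything else is a direct application of Proposition~\ref{p:puder}.
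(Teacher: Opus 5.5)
Your proposal is correct and follows essentially the same route as the paper: Lemma~\ref{l:support} together with Puder's criterion gives the equivalence, and (a)$\Rightarrow$(b) comes from lifting the identified vertices of a length-$(s-1)$ chain to $S_w$ through the surjective composite quotient map and applying Lemma~\ref{l:sequential-subgroups}, while (b)$\Rightarrow$(a) is the Hopfian argument; the sequential interpretation uses no-op deletion and Puder's lower bound. Your explicit appeal to Lemma~\ref{l:no-leaves} for surjectivity, and your note that the fold maps agree with the canonical morphisms, only spell out points the paper handles in the remark before the theorem.
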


\begin{proof}
The restriction $p_i<q_i$ is only a normalization. Indeed, if
$p_i=q_i$ in~\eqref{eq:generation-certificate}, then the right-hand side is
generated by at most $s-1$ elements and cannot equal the rank-$s$ group
$F_I$; interchanging $p_i$ and $q_i$ only inverts the corresponding
generator.

By Lemma~\ref{l:support}, condition (a) is equivalent to $\langle w\rangle\ff F_I$. Since $\langle w\rangle\le_{X_I}F_I$, Proposition~\ref{p:puder} gives
\begin{equation}\label{eq:puder-specialized}
 \langle w\rangle\ff F_I
 \quad\Longleftrightarrow\quad
 \rho_{X_I}(\langle w\rangle,F_I)
 =\rk(F_I)-\rk(\langle w\rangle)=s-1.
\end{equation}

Suppose first that (a) holds. By~\eqref{eq:puder-specialized}, there is a chain
\begin{equation}\label{eq:chain}
 S_w=\Gamma_0\immq\Gamma_1\immq\cdots
 \immq\Gamma_{s-1}=R_{X_I}.
\end{equation}
At the $i$-th step, let $\bar u_i,\bar v_i$ be the two distinct vertices of $\Gamma_{i-1}$ that are identified. The composite quotient map
\[
 S_w\longrightarrow\Gamma_{i-1}
\]
is surjective on vertices. Choose one preimage of each. They are distinct,
and after relabeling them we may write them as $v_{p_i},v_{q_i}$ with
$0\le p_i<q_i<N$. Lemma~\ref{l:sequential-subgroups}, applied inductively
along~\eqref{eq:chain}, shows that
\[
 \Gamma_i=
 \Gamma_{X_I}\left(
 \left\langle w,
 a_{p_1}a_{q_1}^{-1},\dots,
 a_{p_i}a_{q_i}^{-1}
 \right\rangle
 \right).
\]
At $i=s-1$ the graph is $R_{X_I}$, so the represented subgroup is $F_I$. Thus~\eqref{eq:generation-certificate} holds, and the final assertion about the chain follows from the construction.

Conversely, suppose that (b) holds, and put
$u_i=a_{p_i}a_{q_i}^{-1}$. Fix an ordering
$X_I=(x_{j_1},\dots,x_{j_s})$ and define $\theta\in\operatorname{End}(F_I)$ by
$\theta(x_{j_1})=w$ and $\theta(x_{j_{i+1}})=u_i$ for $1\le i\le s-1$.
Equation~\eqref{eq:generation-certificate} gives
$\theta(F_I)=\langle w,u_1,\dots,u_{s-1}\rangle=F_I$, so $\theta$ is
surjective. Finitely generated free groups are Hopfian, hence
$\theta$ is an automorphism. Therefore $(w,u_1,\dots,u_{s-1})$ is a free
basis of $F_I$, and in particular $w$ is primitive in $F_I$.
Lemma~\ref{l:support} implies that $w$ is primitive in $F_r$.

It remains to justify the asserted sequential interpretation. Apply the indicated $s-1$ vertex-pair identifications in order. By Lemma~\ref{l:sequential-subgroups}, the final graph is
\[
 \Gamma_{X_I}(F_I)=R_{X_I}.
\]
If the two selected vertices already have the same image at some stage, that stage is a no-op rather than an immediate quotient. Deleting all no-op stages gives a chain of immediate quotients of length at most $s-1$. The rank lower bound in Proposition~\ref{p:puder} gives
\[
 \rho_{X_I}(\langle w\rangle,F_I)
 \ge \rk(F_I)-\rk(\langle w\rangle)=s-1.
\]
Consequently no stage is a no-op, and the original sequence is a chain of exactly $s-1$ immediate quotients.

For completeness, the ranks of the intermediate subgroups are forced as well. Since $K_i$ is generated by $i+1$ elements,
$\rk(K_i)\le i+1$. On the other hand,
\[
 F_I=\langle K_i,u_{i+1},\dots,u_{s-1}\rangle
\]
shows that
\[
 s=\rk(F_I)\le \rk(K_i)+(s-1-i),
\]
so $\rk(K_i)\ge i+1$. Thus
\[
 \rk(K_i)=i+1\qquad(0\le i\le s-1).
\]
\end{proof}

\begin{cor}[full-support form]\label{c:full-support}
Suppose that $w$ involves every generator $x_1,\dots,x_r$. Then $w$ is primitive in $F_r$ if and only if there exist $r-1$ pairs of vertices of $S_w$ such that sequentially identifying their images and folding produces the rose $R_{X_r}$ after exactly $r-1$ immediate quotients. Equivalently, there are positions $(p_i,q_i)_{i=1}^{r-1}$ with
$0\le p_i<q_i<N$ such that
\[
 F_r=\left\langle
 w,a_{p_1}a_{q_1}^{-1},\dots,
 a_{p_{r-1}}a_{q_{r-1}}^{-1}
 \right\rangle.
\]
\end{cor}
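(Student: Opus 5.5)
This is the special case $I=\{1,\dots,r\}$ of Theorem~\ref{t:prefix-certificate}, so the plan is simply to specialize. Throughout, $w$ is taken, as in Convention~\ref{conv:support}, to be nonempty, freely reduced and cyclically reduced.

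Step 1. Since $w$ involves every generator, $I=\xsupp_{X_r}(w)=\{1,\dots,r\}$. Hence $s=r$, $X_I=X_r$, $F_I=F_r$ and $R_{X_I}=R_{X_r}$.

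Step 2. Substituting these values into the equivalence (a)$\Leftrightarrow$(b) of Theorem~\ref{t:prefix-certificate} gives the displayed algebraic form at once. Namely, $w$ is primitive if and only if there are positions $0\le p_i<q_i<N$, for $1\le i\le r-1$, with
\[
 F_r=\left\langle w,a_{p_1}a_{q_1}^{-1},\dots,a_{p_{r-1}}a_{q_{r-1}}^{-1}\right\rangle .
\]

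Step 3. For the graph-theoretic formulation, I argue each direction separately.

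Suppose first that $w$ is primitive. The final assertion of Theorem~\ref{t:prefix-certificate} supplies $r-1$ vertex pairs $(v_{p_i},v_{q_i})$ of $S_w$. Sequentially identifying their images gives a chain of exactly $r-1$ immediate quotients ending at $R_{X_r}$.

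Conversely, suppose that identifying the images of $r-1$ pairs of vertices $(v_{p_i},v_{q_i})$ of $S_w$, in order and with folding, produces $R_{X_r}$. Lemma~\ref{l:sequential-subgroups} identifies the final graph as $\Gamma_{X_r}(\langle w,a_{p_1}a_{q_1}^{-1},\dots\rangle)$. Since this graph is $R_{X_r}=\Gamma_{X_r}(F_r)$, the generation identity of Step 2 holds, after swapping any pair with $p_i>q_i$; a swap only inverts the corresponding generator. Any pair with $p_i=q_i$ would leave at most $r-1$ generators for the rank-$r$ group $F_r$, which is impossible. The Hopfian argument of Theorem~\ref{t:prefix-certificate} then yields primitivity.

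There is no genuine obstacle here. The only point needing care is the vertex-pair form of the converse. It goes through Lemma~\ref{l:sequential-subgroups} rather than directly through Puder's criterion, and that lemma already handles possible no-op identifications.
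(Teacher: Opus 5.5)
Your proposal is correct and takes the same approach as the paper, which states the corollary as the direct specialization $I=\{1,\dots,r\}$, $s=r$ of Theorem~\ref{t:prefix-certificate}. Your explicit converse for the vertex-pair formulation, via Lemma~\ref{l:sequential-subgroups} and the Hopfian argument, spells out reasoning already contained in that theorem's proof.
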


The case $s=1$ is included. There are no guessed pairs, and the certificate condition is simply $\langle w\rangle=F_I$, which is equivalent to $w=x_j^{\pm1}$ for the unique $j\in I$.

\section{Polynomial-time verification}\label{s:verification}

We now convert Theorem~\ref{t:prefix-certificate} into an $\mathsf{NP}$
verifier, with $r\ge2$ fixed.

We use the following consequence of Linton's main algorithm.

\begin{prop}[Linton]\label{p:linton}
Fix $k\ge1$. There is a deterministic polynomial-time algorithm with the following input and output. The input consists of $\SLP$s $\mathbb W,\mathbb W_1,\dots,\mathbb W_k$ over $\Sigma_r$. The algorithm decides whether the element represented by $\mathbb W$ belongs to
\[
 H=\langle\val(\mathbb W_1),\dots,\val(\mathbb W_k)\rangle\le F_r.
\]
More precisely, Theorem~5.8 of~\cite{Linton} gives a bound of the form
$n^{O(k)}$ in its input-size notation.  If $B$ denotes the total encoded input
length in our canonical encoding, the same estimate is $B^{O(k)}$ after the
standard change of encoding.  If $M$ is the total structural size of the input
$\SLP$s, Proposition~\ref{p:slp-size-comparison} then gives
$B=O(M\log(M+2))$, and hence also a bound $M^{O(k)}$ after absorbing the
logarithmic factor into the constant hidden in $O(k)$.  Corollary~5.9
of~\cite{Linton} gives polynomial time for fixed $k$.
\end{prop}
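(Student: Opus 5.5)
The plan is to reduce the statement to Linton's fully compressed subgroup-membership theorem. The only real work is to check that our canonical $\SLP$ encoding can be fed to his algorithm, and that the size parameters transfer with polynomial overhead. I do not intend to reprove anything about compressed Stallings automata; Theorem~5.8 and Corollary~5.9 of~\cite{Linton} are used as black boxes.

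\emph{Preprocessing.} Given the canonical codes of $\mathbb W,\mathbb W_1,\dots,\mathbb W_k$, first check validity of the codes in polynomial time (Definition~\ref{d:slp-encoding}). Next apply Proposition~\ref{p:cyclic-reduction}(a) to each input, replacing it by an $\SLP$ for its free reduction. This is polynomial time, and the output sizes are polynomial in the input sizes.

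Any generator whose reduced length is zero is discarded. This does not change $H$ and only lowers $k$. If every generator is discarded, then $H=1$, and membership is the compressed word problem: test whether $|\red(\val(\mathbb W))|=0$.

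After this step, put all programs in Chomsky normal form (Lemma~\ref{l:slp-ops}(a)). Then take their disjoint union as one shared grammar with $k+1$ distinguished nonterminals. This is exactly the shape of input Linton's algorithm expects: a shared acyclic grammar whose designated nonterminals label the petals of a compressed bouquet. His transitions are nonempty and freely reduced, which is why the generators were freely reduced and trivial ones dropped.

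\emph{Invoking Linton.} Linton's algorithm folds the compressed bouquet into a compressed Stallings automaton for $H$. It then decides whether the reduced word $\val(\mathbb W)$ is accepted from the basepoint back to the basepoint, which is equivalent to $g_{\mathbb W}\in H$. Theorem~5.8 of~\cite{Linton} bounds the running time by $n^{O(k)}$, where $n$ is his input length.

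Since our translation to his format is polynomial, we have $n\le B^{c}$ for a constant $c$ independent of $k$, where $B$ is the total encoded input length. The preprocessing bounds also have degree independent of $k$. Hence the total cost is $B^{O(k)}$. Finally Proposition~\ref{p:slp-size-comparison} gives $B=O(M\log(M+2))\le O(M^2)$, so the bound becomes $M^{O(k)}$ with the constant absorbed into $O(k)$. For fixed $k$ this is polynomial, as Corollary~5.9 of~\cite{Linton} states.

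\emph{Main obstacle.} The step I expect to need care is the bookkeeping between encodings. Linton measures inputs as composition systems with binary-written truncation indices and possibly a different alphabet coding. I must confirm two things. First, his size $n$ and our $\beta$ are polynomially related with an exponent not depending on $k$; otherwise the $O(k)$ in the exponent could be corrupted. Second, the reductions above stay within that relation.

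I would handle this by writing the translation explicitly. Each production $A_i\to A_jA_k$ becomes one rule of his system with indices of $O(\log M)$ bits. From this, a linear-times-logarithmic relationship between the two sizes follows as in the proof of Proposition~\ref{p:slp-size-comparison}.
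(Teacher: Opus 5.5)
Your proposal is correct and takes essentially the same approach as the paper. Both treat Linton's Theorem~5.8 and Corollary~5.9 as black boxes and transfer the $n^{O(k)}$ bound first to $B^{O(k)}$ and then, via Proposition~\ref{p:slp-size-comparison}, to $M^{O(k)}$. Your extra preprocessing (free reduction by Proposition~\ref{p:cyclic-reduction}(a) and discarding trivial generators) is the optional step the paper mentions right after the statement. It is harmless, since it enlarges the input only polynomially, with a degree independent of $k$.
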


In the graph language of Section~\ref{s:core}, the subgroup generators may be placed on compressed loops at one base state, together with inverse loops, producing an involutive compressed nondeterministic automaton. Linton's algorithm performs compressed analogues of subdivision and Stallings folding and converts this object to a compressed deterministic automaton representing the same subgroup. Membership of the compressed query word is then tested without expanding any transition label. The proposition accepts arbitrary $\SLP$ words over the involutive alphabet; if desired, each input word may first be freely reduced by Proposition~\ref{p:cyclic-reduction}(a), without affecting the polynomial bound.

\begin{lem}[certificate size]\label{l:certificate-size}
Let $\CB$ be an $\SLP$ of size $m$ whose nonempty output $w$ has length $N$. For fixed $r$, a list of at most $r-1$ pairs
\[
 (p_i,q_i),\qquad 0\le p_i<q_i<N,
\]
has bit-size polynomial in $m$.
\end{lem}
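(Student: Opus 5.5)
The argument is a direct counting estimate built on Lemma~\ref{l:slp-ops}(a). The first step bounds the output length $N$ of $\CB$. Normalize $\CB$ to Chomsky normal form; this gives an equivalent program with $O(m)$ nonterminals, and along any root-to-leaf path in its dependency tree the indices strictly decrease, so its height is $O(m)$. A nonterminal of height $h$ expands to a word of length at most $2^h$, hence
\[
 N=|w_{\CB}|\le 2^{O(m)}.
\]
Equivalently, one can invoke Lemma~\ref{l:slp-ops}(a) directly. For an unnormalized $\SLP$ there is also an elementary direct argument: the length of $w_{A_i}$ is at most the product of the right-hand-side lengths along the dependency chain, which is at most $2^{\lVert\CB\rVert}$, since $\prod (1+|W_i|)\le 2^{\sum|W_i|}$ after the obvious bookkeeping.

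The second step bounds each integer in the list. Every $p_i,q_i$ satisfies $0\le p_i,q_i<N$, so each has at most $\blen(N)\le\blen(2^{O(m)})=O(m)$ bits. There are at most $r-1$ pairs, so at most $2(r-1)$ integers in total. With $r$ fixed, the total bit-size of the list, including $O(r)$ delimiter symbols, is
\[
 O\bigl(r\,m\bigr)=O_r(m).
\]
This is polynomial, in fact linear, in $m$. By Proposition~\ref{p:slp-size-comparison} the same bound holds when it is measured against $\beta(\CB)$.

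No step is a genuine obstacle. The only point needing care is that the exponential length bound must be stated for whatever size convention $m$ refers to. Lemma~\ref{l:slp-ops}(a) already provides $|w_{\CB}|\le 2^{O(m)}$ with respect to $\lVert\CB\rVert$, including empty right-hand sides and long productions, so it should be cited rather than re-derived.
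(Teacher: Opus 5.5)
Your proof is correct and follows the paper's argument: Lemma~\ref{l:slp-ops}(a) gives $N\le 2^{O(m)}$, so each of the at most $2(r-1)$ positions has $O(m)$ bits, and the bound transfers to $\beta(\CB)$ because $m\le\beta(\CB)$. Your extra direct estimate $N\le 2^{\lVert\CB\rVert}$ for unnormalized programs is also valid, though it is not needed.
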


\begin{proof}
After polynomial-time normalization, Lemma~\ref{l:slp-ops}(a) gives
$N\le2^{O(m)}$. Hence every position has $O(m)$ bits. The list contains at
most $2(r-1)$ positions, and $r$ is fixed. Since $m\le\beta(\CB)$, the same
conclusion holds with respect to the encoded length of $\CB$.
\end{proof}

\begin{prop}[the verifier]\label{p:verifier}
For fixed $r$, there is a deterministic polynomial-time verifier $V_r$ with the following property. Given an $\SLP$ $\CA$ and a certificate consisting of pairs of binary positions, $V_r$ accepts some certificate if and only if $g_{\CA}$ is primitive in $F_r$.
\end{prop}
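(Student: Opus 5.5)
The verifier $V_r$ will implement Theorem~\ref{t:prefix-certificate} on compressed input. On input $\enc_r(\CA)$ and a certificate, $V_r$ first checks validity of the code in polynomial time (Definition~\ref{d:slp-encoding}) and rejects invalid strings. It then applies Proposition~\ref{p:cyclic-reduction}(b) to build, in polynomial time, a Chomsky-normal-form $\SLP$ $\CB$ with $w:=w_{\CB}=\cycred(w_{\CA})$, and computes $N=|w|$ in binary. If $N=0$ it rejects, since the identity is not primitive. Because conjugation is an automorphism, $g_{\CA}$ is primitive if and only if $w$ is primitive. Next it computes $I=\xsupp_{X_r}(w)$ and $s=|I|$ by Lemma~\ref{l:slp-ops}(d). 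The certificate is parsed as a list of exactly $s-1$ pairs of binary integers $(p_i,q_i)$. Any malformed certificate, a wrong number of pairs, or a pair failing $0\le p_i<q_i<N$ causes rejection. Since $N\le 2^{O(\lVert\CB\rVert)}$, these comparisons are polynomial, and Lemma~\ref{l:certificate-size} shows that honest certificates have polynomial bit-size.

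For each $i$, the verifier builds $\SLP$s for the prefixes $a_{p_i}=w[0:p_i]$ and $a_{q_i}=w[0:q_i]$ by Lemma~\ref{l:slp-ops}(b). It then builds an $\SLP$ $\mathbb U_i$ for $a_{p_i}a_{q_i}^{-1}$ by Lemma~\ref{l:slp-ops}(c). All of these have size polynomial in $\beta(\CA)$ plus the certificate length. The remaining task is to test the equality \eqref{eq:generation-certificate}, namely $H:=\langle w,\val(\mathbb U_1),\dots,\val(\mathbb U_{s-1})\rangle=F_I$. Since every generator is a word over $X_I^{\pm1}$, we have $H\le F_I$ automatically. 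So equality holds if and only if $x_j\in H$ for each $j\in I$. The verifier runs Linton's algorithm (Proposition~\ref{p:linton}) with $k=s\le r$ compressed generators, once for each one-letter query $x_j$, $j\in I$. This is at most $r$ calls, each taking time $M^{O(r)}$, which is polynomial for fixed $r$. One caveat is that Linton's algorithm works in $F_r$ rather than $F_I$. This causes no difficulty, because membership of $x_j$ in a subgroup of $F_I\le F_r$ does not depend on the ambient group. It also handles the case $s=1$: there the certificate is empty and the test reduces to $x_j\in\langle w\rangle$.

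Correctness is then immediate from Theorem~\ref{t:prefix-certificate}. If $g_{\CA}$ is primitive, pairs satisfying (b) exist with $0\le p_i<q_i<N$, and they form an accepted certificate of polynomial size by Lemma~\ref{l:certificate-size}. Conversely, if some certificate is accepted, then (b) holds, so $w$ is primitive and hence $g_{\CA}$ is primitive. The main obstacle is only bookkeeping. One must confirm that the composition of the polynomial constructions (cyclic reduction, extraction of prefixes, inversion, concatenation) keeps the total input to Linton's algorithm polynomial in $\beta(\CA)$ plus the certificate length. Since Linton's bound has exponent $O(k)$ with $k\le r$, the fixed rank is essential. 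I would check this with explicit size bounds, chaining Propositions~\ref{p:cyclic-reduction} and~\ref{p:slp-size-comparison} with Lemma~\ref{l:slp-ops}.
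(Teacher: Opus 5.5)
Your proposal is correct and follows the paper's proof essentially step by step. Both arguments use the same sequence: compressed cyclic reduction, support computation and range checks on exactly $s-1$ pairs, prefix-difference $\SLP$s via Lemma~\ref{l:slp-ops}(b),(c), at most $r$ calls to Linton's membership test for the letters $x_j$ with $j\in I$, and correctness from Theorem~\ref{t:prefix-certificate}. Your extra remarks are harmless refinements: the encoding-validity check (which the paper places in the proof of Theorem~\ref{t:main1}), the observation that membership of $x_j$ does not depend on the ambient group, and the treatment of $s=1$.
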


\begin{proof}
Let $n=\lVert\CA\rVert$. The verifier performs the following steps.

\smallskip
\noindent\emph{Step 1: cyclic reduction.}
Using Proposition~\ref{p:cyclic-reduction}(b), construct in deterministic polynomial time an $\SLP$ $\CB$ whose output
\[
 w=\val(\CB)
\]
is freely and cyclically reduced and represents a conjugate of $g_{\CA}$. Let $m=\lVert\CB\rVert$ and compute $N=|w|$ in binary. Since $m$ is polynomially bounded in $n$, all later polynomial bounds in $m$ are polynomial bounds in $n$. If $N=0$, reject.

\smallskip
\noindent\emph{Step 2: support.}
Compute
\[
 I=\xsupp_{X_r}(w),\qquad s=|I|
\]
using Lemma~\ref{l:slp-ops}(d). The verifier requires the certificate to contain exactly $s-1$ pairs
\[
 (p_1,q_1),\dots,(p_{s-1},q_{s-1})
\]
and checks that $0\le p_i<q_i<N$ for every $i$. By Lemma~\ref{l:certificate-size}, a valid certificate has polynomial size.

\smallskip
\noindent\emph{Step 3: compressed prefix differences.}\par\noindent
For every position $t$ occurring in the certificate, use Lemma~\ref{l:slp-ops}(b) to construct an $\SLP$ $\CP_t$ satisfying
\[
 \val(\CP_t)=a_t=w[0:t].
\]
Use Lemma~\ref{l:slp-ops}(c) and one new concatenation production to construct an $\SLP$ $\mathbb U_i$ with
\[
 \val(\mathbb U_i)=a_{p_i}a_{q_i}^{-1}.
\]
There are at most $2(r-1)$ prefix programs and $r-1$ difference programs. Their total size and the time needed to construct them are polynomial in $m$.

\smallskip
\noindent\emph{Step 4: endpoint subgroup.}
Let
\[
 K=\left\langle
 w,\val(\mathbb U_1),\dots,\val(\mathbb U_{s-1})
 \right\rangle.
\]
This subgroup is given by exactly $s\le r$ $\SLP$ generators of total size
polynomial in $m$. For every $j\in I$, use the one-letter $\SLP$ for $x_j$ as
the query in Proposition~\ref{p:linton} and decide whether $x_j\in K$.
Accept precisely when all these membership tests return yes.

Every generator displayed for $K$ is a word over $X_I^{\pm1}$, so $K\le F_I$. Therefore
\[
 x_j\in K\text{ for all }j\in I
 \quad\Longleftrightarrow\quad
 K=F_I.
\]
The number of subgroup generators is $s\le r$. By Proposition~\ref{p:linton}, each membership test has running time
$M^{O(s)}$, where $M$ is the total structural size of that membership
instance and is polynomial in $m$. Since $r$ is fixed, this is a polynomial bound with a constant exponent; there are at most $r$ tests. Thus $V_r$ runs in deterministic polynomial time.

It remains to prove correctness. Suppose first that $g_{\CA}$ is primitive. Its cyclic reduction $w$ is primitive, and Theorem~\ref{t:prefix-certificate} supplies $s-1$ pairs for which $K=F_I$. The corresponding certificate is accepted.

Conversely, if a certificate is accepted, the basis-membership tests show that $K=F_I$. Equation~\eqref{eq:generation-certificate} therefore holds, so Theorem~\ref{t:prefix-certificate} implies that $w$ is primitive in $F_r$. Since $w$ is conjugate to $g_{\CA}$, the element $g_{\CA}$ is primitive as well.
\end{proof}

We can now prove the first main result of this paper, Theorem~\ref{t:main} from the Introduction.

\begin{thm}\label{t:main1}
For every fixed integer $r\ge 2$, the language $\CPrim_r$ belongs to $\mathsf{NP}$.
\end{thm}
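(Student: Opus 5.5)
The plan is to derive Theorem~\ref{t:main1} from the verifier of Proposition~\ref{p:verifier} via the standard certificate characterization of $\mathsf{NP}$. A language $L\subseteq\Omega_r^*$ lies in $\mathsf{NP}$ if there are a polynomial $p$ and a deterministic polynomial-time relation $R(x,c)$ such that $x\in L$ if and only if some $c$ with $|c|\le p(|x|)$ satisfies $R(x,c)$. I take $x$ to be an input string and $c$ a binary list of position pairs.

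The relation $R$ has three parts. First, check that $x$ is a valid code $\enc_r(\CA)$. By Definition~\ref{d:slp-encoding} this takes time polynomial in $\beta(\CA)=|x|$, and invalid strings are rejected. Second, run $V_r$ of Proposition~\ref{p:verifier} on $\CA$ and $c$. Third, make $V_r$ reject any certificate that is malformed or has the wrong number of pairs; this is already part of Step~2 of that proof.

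All running times are polynomial in $\lVert\CA\rVert$, hence in $\beta(\CA)$, since $\lVert\CA\rVert\le\beta(\CA)$ by Proposition~\ref{p:slp-size-comparison}. To keep the cost of $V_r$ polynomial in $|x|$ regardless of the certificate, the verifier first computes $N$ and rejects any certificate longer than $2(r-1)(\blen(N)+1)$ plus delimiters.

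The certificate must have polynomial length. For primitive $g_{\CA}$, Theorem~\ref{t:prefix-certificate}, applied to $w=\cycred(w_{\CA})$, gives $s-1\le r-1$ pairs of positions below $N$. Lemma~\ref{l:certificate-size}, applied to the polynomial-size $\SLP$ $\CB$ of Proposition~\ref{p:cyclic-reduction}(b), bounds their total bit length by a polynomial in $\lVert\CB\rVert$, hence in $\beta(\CA)$. This gives the polynomial $p$. Correctness in both directions is exactly the content of Proposition~\ref{p:verifier}.

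The only point requiring care is uniformity. The exponent $O(r)$ inherited from Linton's bound in Proposition~\ref{p:linton} must be a constant, which holds because $r$ is fixed. Size bounds stated in $\lVert\cdot\rVert$ must be converted to bounds in the encoded length, which Proposition~\ref{p:slp-size-comparison} does with only a logarithmic loss. Beyond this I expect no real obstacle, since the substantive work is already contained in Theorem~\ref{t:prefix-certificate} and Proposition~\ref{p:verifier}.
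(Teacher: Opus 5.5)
Your proposal is correct and follows the paper's proof. Like the paper, you reject invalid encodings, then invoke the verifier $V_r$ of Proposition~\ref{p:verifier} together with the polynomial certificate-size bound of Lemma~\ref{l:certificate-size}; your explicit cutoff on certificate length, which keeps the verifier's running time polynomial in $|x|$ whatever string is supplied, is a standard detail the paper leaves implicit in Step~2 of the verifier.
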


\begin{proof}
First parse the input according to Definition~\ref{d:slp-encoding} and reject
it if it is not a valid $\SLP$ encoding.  For a valid encoding,
Proposition~\ref{p:verifier} gives a certificate of polynomial bit-size for
every yes-instance, and a deterministic polynomial-time algorithm checks the
certificate. Hence $\CPrim_r\in\mathsf{NP}$.
\end{proof}

\section{Further remarks on the certificate}\label{s:remarks}

\begin{rem}[endpoint verification and the sequential interpretation]
The verifier checks only the endpoint equality $K=F_I$ and does not explicitly construct the intermediate core graphs.  Nevertheless, if
\[
 K_i=\langle w,u_1,\dots,u_i\rangle,
 \qquad u_i=a_{p_i}a_{q_i}^{-1},
\]
then Lemma~\ref{l:sequential-subgroups} identifies $\Gamma_{X_I}(K_i)$ with the graph obtained after the first $i$ prescribed vertex identifications.  For an accepted certificate, the same two-sided rank estimate used in the proof of Theorem~\ref{t:prefix-certificate} gives $\rk(K_i)=i+1$ for every $i$.  Hence no stage is a no-op, and
\[
 S_w=\Gamma_{X_I}(K_0)\immq\Gamma_{X_I}(K_1)
 \immq\cdots\immq\Gamma_{X_I}(K_{s-1})=R_{X_I}
\]
is a chain of exactly $s-1$ immediate rank-increasing quotients.  Thus there is no need to construct the intermediate graphs or to run Linton's algorithm separately on each $K_i$: verifying the endpoint certifies the entire chain.
\end{rem}

\begin{rem}[bounded rank does not bound graph size]
The rank bound alone is not a sufficient complexity argument. The initial graph $S_w$ has rank one but $N$ vertices, where $N$ may be exponential in the input $\SLP$ size. The prefix-difference formulation avoids ever listing these vertices or edges. A vertex is named by an $O(\log N)$-bit position, and its path from the basepoint is represented by a polynomial-size prefix $\SLP$.
\end{rem}

\begin{rem}[dependence on the ambient rank]
The proof establishes an $\mathsf{NP}$ upper bound for each fixed $r$.
Linton's quantitative bound is $M^{O(k)}$, where $M$ is the total
structural size of the membership instance and the subgroup is given by $k$
compressed generators. Here $k=s\le r$, so the verifier has a bound of the
form $M^{O(r)}$, and the exponent is constant once $r$ is fixed. The constant
hidden in this exponent is inherited from Linton's analysis and is not made
explicit here. If $r$ is included as part of the input, the same argument does
not give a verifier whose running-time exponent is independent of $r$.
\end{rem}

\section{Deterministic polynomial time in rank two}\label{s:rank-two}

We now specialize to
\[
 F_2=F(a,b).
\]
In rank two the compressed primitivity problem admits a deterministic
polynomial-time algorithm.  The extra ingredient is the classical fact that
two primitive elements of $F_2$ with the same image in abelianization are
conjugate.

For $g\in F_2$, write
\[
 \operatorname{ab}(g)=\big(\sigma_a(g),\sigma_b(g)\big)\in\mathbb Z^2,
\]
where $\sigma_a$ and $\sigma_b$ are the exponent sums in $a$ and $b$,
respectively.

We use the following form of the Nielsen--Osborne--Zieschang classification;
see~\cite{OsborneZieschang}.

\begin{prop}[Nielsen--Osborne--Zieschang]\label{p:oz-classification}
Let $g\in F_2$.
\begin{enumerate}
\item If $g$ is primitive, then
\[
 \gcd\big(|\sigma_a(g)|,|\sigma_b(g)|\big)=1.
\]
\item For every $(p,q)\in\mathbb Z^2$ with $\gcd(|p|,|q|)=1$, there is a
primitive element with exponent-sum pair $(p,q)$.
\item If $g,h\in F_2$ are primitive and
$\operatorname{ab}(g)=\operatorname{ab}(h)$, then $g$ and $h$ are conjugate.
\end{enumerate}
Thus the map induced by abelianization gives a bijection from primitive
conjugacy classes in $F_2$ to primitive vectors in $\mathbb Z^2$.
\end{prop}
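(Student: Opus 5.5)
We prove the three parts in order, using only classical facts about $\Aut(F_2)$ and its action on the abelianization $\mathbb Z^2$.

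For part (a), let $g$ be primitive and extend it to a basis $(g,h)$ of $F_2$. Abelianization maps bases of $F_2$ to bases of $\mathbb Z^2$, because an automorphism of $F_2$ induces an automorphism of $\mathbb Z^2$. Hence $\operatorname{ab}(g)$ is the first column of a matrix in $GL_2(\mathbb Z)$, so it is a primitive vector: $\gcd(|\sigma_a(g)|,|\sigma_b(g)|)=1$.

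For part (b), we use surjectivity of $\Aut(F_2)\to GL_2(\mathbb Z)$, a classical result of Nielsen. Concretely, the elementary matrices and the sign and permutation matrices are the images of the Nielsen automorphisms $a\mapsto ab$, $a\mapsto a^{-1}$, and $a\leftrightarrow b$. A primitive vector $(p,q)$ is the first column of some $M\in GL_2(\mathbb Z)$; this follows from B\'ezout, or equivalently from running the Euclidean algorithm on $(p,q)$. Lift $M$ to $\phi\in\Aut(F_2)$. Then $\phi(a)$ is primitive and $\operatorname{ab}(\phi(a))=(p,q)$. One could make this explicit through Christoffel words, but the lifting argument suffices.

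Part (c) is the main obstacle. Suppose $g,h$ are primitive with the same abelianization. By the transitivity of $\Aut(F_2)$ on primitive elements, write $h=\psi(g)$. The induced matrix $\bar\psi$ fixes $v=\operatorname{ab}(g)$. Extend $v$ to a basis of $\mathbb Z^2$ by $\operatorname{ab}(g')$, where $(g,g')$ is a basis of $F_2$. In that basis, the stabilizer of $v$ in $GL_2(\mathbb Z)$ consists of the matrices $\begin{pmatrix}1&k\\0&\pm1\end{pmatrix}$. These lift to automorphisms of the form $g\mapsto g$, $g'\mapsto g'^{\pm1}g^k$, all of which fix $g$. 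So, after composing $\psi$ with such a lift, we may assume $\bar\psi=\mathrm{id}$. The kernel of $\Aut(F_2)\to GL_2(\mathbb Z)$ is exactly $\mathrm{Inn}(F_2)$, by Nielsen's theorem. Hence $\psi$ is inner, and $h=\psi(g)$ is conjugate to $g$.

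This kernel statement is the one genuinely nontrivial input. If we had to prove it, the plan would be Nielsen's original argument: first show that IA-automorphisms send the commutator $[a,b]$ to a conjugate of $[a,b]^{\pm1}$, then pin down the automorphism by a Whitehead-type length reduction. Here we cite it from~\cite{OsborneZieschang} together with the standard references.

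Finally, (a) shows the map from primitive conjugacy classes to primitive vectors is well defined, since $\operatorname{ab}$ is constant on conjugacy classes. Part (b) gives surjectivity, and (c) gives injectivity, so the map is a bijection.
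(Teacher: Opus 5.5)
Your proposal is correct and follows essentially the paper's route: (a) and (b) are the same, and in (c) you precompose $\psi$ with a lift of a stabilizer matrix that fixes $g$. That is the paper's step of replacing $h_2$ by $h^k h_2^{\pm1}$ so that the two induced matrices agree, stated for the automorphism instead of for the basis. Both arguments then conclude from Nielsen's $\ker\bigl(\Aut(F_2)\to\operatorname{GL}(2,\mathbb Z)\bigr)=\mathrm{Inn}(F_2)$. The adjustment has to be a precomposition, i.e.\ $\psi\circ\lambda$ with $\lambda$ lifting $\bar\psi^{-1}$, so that $\psi(g)=h$ is preserved.
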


\begin{proof}
The classification of primitive elements and bases in rank two is due
to Nielsen and is given in the precise form used here by
Osborne--Zieschang~\cite{OsborneZieschang}. A related, somewhat less
precise structural description of bases in $F_2$ appears in
Cohen--Metzler--Zimmermann~\cite{CohenMetzlerZimmermann}; see also the
Christoffel-word treatment of
Kassel--Reutenauer~\cite{KasselReutenauer}. We recall the short deductions
needed below. Nielsen's rank-two theorem identifies
\[
 \Out(F_2)\cong\operatorname{GL}(2,\mathbb Z);
\]
equivalently, the homomorphism
$\Aut(F_2)\to\operatorname{GL}(2,\mathbb Z)$ induced by abelianization is surjective and has kernel $\operatorname{Inn}(F_2)$.

If $g$ is primitive, extend it to an ordered basis $(g,g_2)$. The two
abelianization vectors form a basis of $\mathbb Z^2$, so the coordinates of
$\operatorname{ab}(g)$ are coprime. This proves~(a). Conversely, a primitive
vector $v\in\mathbb Z^2$ is the first column of a matrix in
$\operatorname{GL}(2,\mathbb Z)$. Lift that matrix to an automorphism of
$F_2$; the image of $a$ is primitive and has abelianization $v$. This
proves~(b).

For~(c), let $g,h$ be primitive with the same abelianization vector $v$.
Extend them to ordered bases $(g,g_2)$ and $(h,h_2)$, and put
\[
 u=\operatorname{ab}(g_2),\qquad u'=\operatorname{ab}(h_2).
\]
After replacing $h_2$ by $h_2^{-1}$ and $u'$ by $-u'$, if necessary, assume
$\det(v,u)=\det(v,u')$. Then $\det(v,u-u')=0$, and primitivity of the integral
vector $v$ gives $u-u'=kv$ for some $k\in\mathbb Z$. Replace $h_2$ by
$h^kh_2$ and continue to denote the modified second basis element by $h_2$.
This elementary Nielsen transformation gives
$\operatorname{ab}(h_2)=kv+u'=u$.

Let $\alpha,\beta\in\Aut(F_2)$ carry $(a,b)$ to $(g,g_2)$ and $(h,h_2)$,
respectively. Their induced matrices on abelianization are equal, so
$\beta\alpha^{-1}\in\ker(\Aut(F_2)\to\operatorname{GL}(2,\mathbb Z))
=\operatorname{Inn}(F_2)$. Hence, for some $c\in F_2$,
$h=\beta(a)=(\beta\alpha^{-1})(g)=cgc^{-1}$. This proves~(c) and the stated
bijection.
\end{proof}

We will use the following immediate consequence.

\begin{cor}\label{c:ab10}
If $g\in F_2$ is primitive and
\[
 \operatorname{ab}(g)=(1,0),
\]
then $g$ is conjugate to $a$.
\end{cor}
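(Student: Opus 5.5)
This is a direct specialization of Proposition~\ref{p:oz-classification}. Since $g$ is primitive with $\operatorname{ab}(g)=(1,0)$, and $a$ is itself primitive with $\operatorname{ab}(a)=(\sigma_a(a),\sigma_b(a))=(1,0)$, part~(c) of Proposition~\ref{p:oz-classification} applied to the pair $g,h=a$ gives that $g$ and $a$ are conjugate.

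So the plan has three short steps. First, record that $a$ is primitive, since it belongs to the free basis $(a,b)$. Second, compute its exponent-sum pair, $\operatorname{ab}(a)=(1,0)$. Third, invoke part~(c) of Proposition~\ref{p:oz-classification}.

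There is no real obstacle, because all the work sits in Proposition~\ref{p:oz-classification}(c). That part was proved above by lifting equal abelianization matrices to automorphisms that differ by an inner automorphism. If one wanted a self-contained argument, one could repeat that step directly. Extend $g$ to a basis $(g,g_2)$ and adjust $g_2$ by an elementary Nielsen move $g_2\mapsto g^kg_2^{\pm1}$ so that $\operatorname{ab}(g_2)=(0,1)$. Then the automorphism $a\mapsto g$, $b\mapsto g_2$ acts trivially on $\mathbb Z^2$, and is therefore inner by Nielsen's identification $\Out(F_2)\cong\operatorname{GL}(2,\mathbb Z)$. The only point to check is that the determinant normalization is possible, and it is: $\det\bigl((1,0),u\bigr)=\pm1$ for $u=\operatorname{ab}(g_2)$, so inverting $g_2$ if needed gives second coordinate $1$, and subtracting a multiple of $g$ then kills the first coordinate.
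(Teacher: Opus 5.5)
Your proposal is correct and is the paper's proof: apply Proposition~\ref{p:oz-classification}(c) to $g$ and $h=a$, using that $a$ is primitive with $\operatorname{ab}(a)=(1,0)$. Your optional self-contained sketch is also sound; it is just the paper's proof of part~(c) specialized to $v=(1,0)$.
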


\begin{proof}
Both $g$ and $a$ are primitive and have the same exponent-sum pair.  Apply
Proposition~\ref{p:oz-classification}(c).
\end{proof}

We next record the Euclidean shortening behind the algorithm.  This
subsection is not needed for the final correctness test, but it explains why
the required automorphism can be assembled in only logarithmically many
batched stages.

\begin{prop}[one Christoffel--Euclidean step]
\label{p:christoffel-step}
For integers $P\ge1$, $Q\ge0$, put
\[
 d_i(P,Q)=\left\lfloor\frac{iQ}{P}\right\rfloor
     -\left\lfloor\frac{(i-1)Q}{P}\right\rfloor,
 \qquad 1\le i\le P,
\]
and $C_{P,Q}=\prod_{i=1}^{P}ab^{d_i(P,Q)}$. Assume
$0<P\le Q$ and $\gcd(P,Q)=1$, write
\[
 Q=mP+R,\qquad m\ge1,\qquad 0\le R<P,
\]
and define $\tau_m\in\Aut(F_2)$ by
$\tau_m(a)=ab^{-m}$ and $\tau_m(b)=b$. Then:
\begin{enumerate}
\item $C_{P,Q}$ is cyclically reduced and primitive, has exponent-sum pair
$(P,Q)$, and represents the unique primitive conjugacy class with that pair;
\item $\tau_m(C_{P,Q})=C_{P,R}$;
\item The cyclic lengths satisfy
\[
 \lVert C_{P,R}\rVert_{\rm cyc}=P+R
 <\frac23(P+Q)
 =\frac23\lVert C_{P,Q}\rVert_{\rm cyc}.
\]
\end{enumerate}
\end{prop}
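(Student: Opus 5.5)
Part (b) is the heart of the matter, and I would prove it first by direct computation on the blocks. Write $C_{P,Q}=\prod_{i=1}^P ab^{d_i}$ with $d_i=d_i(P,Q)$. Since $\lfloor iQ/P\rfloor=im+\lfloor iR/P\rfloor$, we get
\[
 d_i(P,Q)=m+d_i(P,R)\qquad(1\le i\le P).
\]
Applying $\tau_m$ gives $\tau_m(ab^{d_i})=ab^{-m}b^{d_i}=ab^{d_i(P,R)}$, and $d_i(P,R)\ge0$. Hence $\tau_m(C_{P,Q})=C_{P,R}$ holds literally as words, with no cancellation. This settles (b).

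For (a), I would use that every $d_i\ge0$. The word $C_{P,Q}$ is then a positive word in $a,b$, so it is freely and cyclically reduced. Its letter counts are $P$ for $a$ and $\sum_i d_i=\lfloor PQ/P\rfloor-\lfloor 0\rfloor=Q$ for $b$, by telescoping. Thus the exponent-sum pair is $(P,Q)$ and the cyclic length is $P+Q$.

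For primitivity, I would induct via the Euclidean algorithm on $(P,Q)$, allowing the roles of $a$ and $b$ to swap. Part (b) reduces $C_{P,Q}$ by an automorphism to $C_{P,R}$ with $\gcd(P,R)=1$. If $R=0$, coprimality forces $P=1$, and $C_{1,0}=a$ is primitive. If $R>0$, then $0<R<P$, so we need primitivity of $C_{P,R}$ with fewer $b$'s than $a$'s.

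This is the main obstacle, because the formula as stated only covers $P\le Q$. I would handle it by a symmetric shear $\sigma(b)=a^{-k}b$, or by showing directly that the lower Christoffel word $C_{P,R}$ is conjugate to the image of $C_{R,P}$ under the swap $a\leftrightarrow b$. That is the standard Christoffel duality; cite Kassel--Reutenauer if needed, or verify by a cyclic rotation argument using the balanced property of the sequence $d_i$.

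A cleaner route, which I would try first, avoids the duality. Prove primitivity by strong induction on $P+Q$ over all coprime $P\ge1$, $Q\ge0$, treating $P>Q$ through the identity $C_{P,Q}=\tau'(C_{P-Q',\,Q})$ for a suitable elementary automorphism $\tau'(a)=a$, $\tau'(b)=ab$ applied to a shorter Christoffel word. If this does not come out cleanly, fall back on citing Osborne--Zieschang for the classical fact that $C_{P,Q}$ is primitive. Once primitivity is known, uniqueness of the conjugacy class is immediate from Proposition~\ref{p:oz-classification}(c).

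For (c), positivity gives cyclic lengths $P+Q$ and $P+R$. The inequality $3(P+R)<2(P+Q)$ is equivalent to $P+3R<2Q=2mP+2R$, that is, $R<(2m-1)P$. This holds because $R<P\le(2m-1)P$ for $m\ge1$.
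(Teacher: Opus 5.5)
Your proposal is correct and essentially the paper's proof. Parts (b) and (c) use the same computations, $d_i(P,Q)=m+d_i(P,R)$ and $(2m-1)P-R>0$, and for primitivity in (a) the paper simply cites Osborne--Zieschang and Kassel--Reutenauer, which is your fallback (strictly, in (b) the $b^{-m}$ does cancel inside each block, so it is the freely reduced image that equals $C_{P,R}$). Your optional self-contained induction also goes through with $Q'=Q$: for $P>Q\ge1$ one has $C_{P,Q}=\psi(C_{P-Q,Q})$ with $\psi(a)=a$, $\psi(b)=ab$, because the sequence $(d_i(P,Q))_{i=1}^{P}$, read with $0\mapsto a$ and $1\mapsto b$, has its $j$-th $a$ at position $\lfloor (j-1)P/(P-Q)\rfloor+1=j+\lfloor (j-1)Q/(P-Q)\rfloor$, exactly as in $C_{P-Q,Q}$.
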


\begin{proof}
Part (a) is the Christoffel-word form of
Proposition~\ref{p:oz-classification}; see
\cite{OsborneZieschang,KasselReutenauer}. For (b),
\[
 d_i(P,Q)=m+
 \left(\left\lfloor\frac{iR}{P}\right\rfloor
 -\left\lfloor\frac{(i-1)R}{P}\right\rfloor\right).
\]
Thus $d_i(P,Q)-m=d_i(P,R)$. The terminal word obtained by applying the
substitution defining $\tau_m$ freely reduces as follows:
\[
 \red\left(\prod_{i=1}^{P}ab^{-m}b^{d_i(P,Q)}\right)
 =\prod_{i=1}^{P}ab^{d_i(P,Q)-m}
 =C_{P,R}.
\]
In particular, $\tau_m(C_{P,Q})=C_{P,R}$ in $F_2$.
For (c), the positive words have cyclic lengths $P+Q$ and $P+R$, and
\[
 2(P+Q)-3(P+R)=(2m-1)P-R>0,
\]
because $m\ge1$ and $R<P$.
\end{proof}

Thus, after possibly inverting or interchanging the basis elements, one
Euclidean division step deletes the common power $b^m$ from every
$b$-block of the Christoffel representative.  The remaining exponents are
$0$ or $1$, and after interchanging $a$ and $b$ the same operation can be
repeated.  In particular, the number of batched Euclidean stages is
$O(\log(P+Q))$.

We now give the compressed implementation.  The next elementary lemma is
used repeatedly.

\begin{lem}[compressed substitution]\label{l:compressed-substitution}
Let $\CG$ be an $\SLP$ over a fixed alphabet $\Sigma$, and suppose that an
$\SLP$ $\CH_z$ is given for every $z\in\Sigma$. There is an $\SLP$ producing
the image of $w_{\CG}$ under the homomorphism $z\mapsto w_{\CH_z}$. If
\[
 S=\lVert\CG\rVert+
 \sum_{z\in\Sigma}\lVert\CH_z\rVert,
\]
then the output has size $O(S)$ and encoded length $O(S\log(S+2))$, and it
can be constructed in polynomial time.
\end{lem}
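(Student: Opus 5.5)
The plan is to mimic the one-layer construction in the proof of Proposition~\ref{p:fixed-endomorphism}, with the fixed terminal words $U_z$ replaced by the given programs $\CH_z$.

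First, rename the nonterminals of the programs $\CH_z$ ($z\in\Sigma$) so that their variable sets are pairwise disjoint. Then list all their productions first, program by program, each in its own dependency order. This produces a single acyclic grammar. In it, the root $H_z$ of $\CH_z$ expands to $w_{\CH_z}$.

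After these variables, create copies $\widehat G_1,\dots,\widehat G_n$ of the nonterminals $G_1,\dots,G_n$ of $\CG$, in the original order. In each right-hand side $W_i$ of $\CG$, replace every terminal occurrence $z$ by the single nonterminal $H_z$ and every nonterminal $G_j$ by $\widehat G_j$. Take the result as the production for $\widehat G_i$, and make $\widehat G_n$ the root. The index condition holds: every $H_z$ precedes all the $\widehat G_j$, and $j<i$ is preserved.

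Induction on $i$ gives $w_{\widehat G_i}=\psi(w_{G_i})$, where $\psi$ is the monoid homomorphism $z\mapsto w_{\CH_z}$. The base case of a right-hand side with no nonterminals is immediate from the substitution, and concatenation is respected by $\psi$. An empty right-hand side stays empty, and empty $\CH_z$ outputs cause no difficulty.

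For the size, the transformed copy of $\CG$ has exactly the same number of nonterminals and right-hand-side symbols as $\CG$, since each symbol is replaced by exactly one symbol. The adjoined programs contribute $\sum_z\lVert\CH_z\rVert$. Hence the output size is exactly $S$, so in particular $O(S)$. Proposition~\ref{p:slp-size-comparison} then gives encoded length $O(S\log(S+2))$.

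The construction is a single scan that relabels indices. Every index is at most $S$, so it has $O(\log(S+2))$ bits. The output code is therefore written within $O(S\log(S+2))$ bit operations, which is polynomial time.

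The only point needing care is the bookkeeping of renumbering. All references inside each $\CH_z$ must be shifted by a fixed offset, and the offsets must be chosen so the global dependency order is respected. Because $\Sigma$ is fixed, there are only constantly many blocks, so this is routine.

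One subtlety concerns inverse letters. If one wants $w_{\CH_{z^{-1}}}=w_{\CH_z}^{-1}$, that must be part of the input hypothesis. The lemma as stated treats $\Sigma$ as a plain alphabet, so no such consistency is required.
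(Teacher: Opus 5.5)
Your proof is correct, but it takes a somewhat different route from the paper's.

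\textbf{Your route.} You reuse the one-layer construction of Proposition~\ref{p:fixed-endomorphism}, with each fixed image word replaced by a shared copy of $\CH_z$. Every terminal occurrence $z$ in $\CG$ is replaced by the single root symbol $H_z$. No normalization is needed, and the output size is exactly $S$, not merely $O(S)$.

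\textbf{The paper's route.} The paper first puts $\CG$ and every $\CH_z$ in Chomsky normal form. It maps each terminal production $A\to z$ of $\CG$ directly to the root of $\CH_z$, so no unit productions appear. It then creates one new binary production $\theta(A)\to\theta(B)\theta(C)$ for each binary production of $\CG$. The cases of an empty or terminal normalized root are handled separately.

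\textbf{What each buys.} The paper's version keeps the output in Chomsky normal form whenever the image programs are normalized and nonerasing. Lemma~\ref{l:euclidean-automorphism} invokes exactly this to argue that each of the $O(n)$ shears adds only $O(\log(m+1))$ to the size, rather than compounding a renormalization constant at every stage. Your version gets that additivity directly: each application adds exactly $\sum_z\lVert\CH_z\rVert$, whatever form $\CG$ is in. It also treats empty images and empty right-hand sides uniformly. The price is that your output is generally not in Chomsky normal form; for example, $G_i\to z$ becomes the unit production $\widehat G_i\to H_z$. That is harmless for the lemma as stated. It is also harmless downstream, because Proposition~\ref{p:cyclic-reduction} renormalizes its input in linear time before applying Schleimer's reduction.
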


\begin{proof}
First rename all input nonterminals so that the programs have disjoint
variable sets and put each program in Chomsky normal form. By the linear
structural-overhead normalization described in Section~\ref{s:slp}, the total
number of normalized nonterminals and right-hand-side symbol occurrences is
$O(S)$. If the normalized program $\CG$ has the exceptional empty root
production, return the one-rule program producing $\eps$. If its root
production is $A_*\to z$, return a renamed copy of $\CH_z$. Otherwise the root
production is binary. Process the nonterminals of $\CG$ from bottom to top and
associate to each nonterminal $A$ a root $\theta(A)$ in the new program. If
$A\to z$ is terminal, set $\theta(A)$ equal to the root of $\CH_z$; no unit
production is introduced. If $A\to BC$, create one new nonterminal with
production
\[
 \theta(A)\longrightarrow\theta(B)\theta(C).
\]
Because $A_*$ is processed last and its production is binary,
$\theta(A_*)$ is the final nonterminal and may be designated as the root.
Thus each image program is shared rather than copied at every terminal
occurrence.

The construction includes each normalized image program once and creates at
most one new binary production for each binary production of the normalized
program $\CG$. Its size is therefore $O(S)$.  The encoded-length bound follows
from Proposition~\ref{p:slp-size-comparison}.
\end{proof}

\begin{lem}[compressed powers]\label{l:compressed-powers}
Given $m\ge0$ in binary, one can construct $\SLP$s for $a^{\pm m}$ and
$b^{\pm m}$ of size
\[
 O\bigl(\blen(m)\bigr)
\]
and encoded length
\[
 O\bigl(\blen(m)\log(\blen(m)+2)\bigr),
\]
in time polynomial in the bit-size of $m$.
\end{lem}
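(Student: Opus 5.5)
The construction is the standard repeated-squaring grammar, built from the binary expansion of $m$. For $m=0$ we output the one-rule program $A_1\to\eps$. For $m\ge1$, write $m=\sum_{j=0}^{k}\epsilon_j2^j$ with $\epsilon_k=1$ and $k+1=\blen(m)$, and treat the letter $z\in\{a,a^{-1},b,b^{-1}\}$ as a parameter. The program will have nonterminals
\[
 D_0\to z,\qquad D_j\to D_{j-1}D_{j-1}\quad(1\le j\le k),
\]
so that, by induction on $j$, we get $w_{D_j}=z^{2^j}$. The root is then a single production $E\to D_{j_1}D_{j_2}\cdots D_{j_t}$, listing the indices $j$ with $\epsilon_j=1$. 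Since all factors are powers of the same letter, the order does not matter, and hence $w_E=z^m$. Choosing $z=a^{-1}$ or $z=b^{-1}$ produces $a^{-m}$ or $b^{-m}$ directly, so no inversion step via Lemma~\ref{l:slp-ops}(c) is needed.

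For the size bound, there are $k+2$ nonterminals. The $D_j$ contribute $1+2k$ right-hand-side symbols, and the root contributes $t\le k+1$. Therefore $\lVert\CB\rVert\le 4(k+1)+1=O(\blen(m))$. The encoded-length bound $O(\blen(m)\log(\blen(m)+2))$ then follows immediately from Proposition~\ref{p:slp-size-comparison}, because the size is at most $5\blen(m)$ and $\blen$ of that quantity is $O(\log(\blen(m)+2))$.

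For the running time, the algorithm reads the binary digits of $m$ once and writes each production in the canonical code of Definition~\ref{d:slp-encoding}. Every nonterminal index has $O(\log(\blen(m)+2))$ bits, so the output is produced in time linear in its encoded length, and this is polynomial in $\blen(m)$. The only point needing care is the convention for $m=0$, namely the empty output and $\blen(0)=1$, and the bound is consistent there. I expect no real obstacle; the only bookkeeping is the order of nonterminals: relabel them as $D_0,\dots,D_k,E$ so that each right-hand side refers only to earlier indices, as Definition~\ref{d:slp} requires.
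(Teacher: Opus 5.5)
Your proposal is correct and takes essentially the same approach as the paper. Both build the repeated-squaring chain $z,z^2,z^4,\dots$, add a root that concatenates the powers indexed by the nonzero binary digits of $m$, read the encoded-length bound off Proposition~\ref{p:slp-size-comparison}, and get negative exponents by using the inverse terminal, which the paper lists as an alternative to Lemma~\ref{l:slp-ops}(c); your explicit count $\lVert\CB\rVert\le 4\,\blen(m)$ and your handling of $m=0$ simply spell out the paper's brief argument.
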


\begin{proof}
For $m=0$, use the one-rule program whose root produces $\eps$. For
$m>0$, construct the powers with exponents $1,2,4,\dots$ by repeated squaring
and concatenate those corresponding to the nonzero binary digits of $m$.
This uses $O(\blen(m))$ nonterminals and right-hand-side symbols, proving the
structural-size bound. Proposition~\ref{p:slp-size-comparison} gives the
stated encoded-length bound. The inverse powers are obtained by
Lemma~\ref{l:slp-ops}(c), or by carrying out the same construction with the
inverse terminal.
\end{proof}

\begin{lem}[compressed Euclidean automorphism]\label{l:euclidean-automorphism}
Let $\CA$ be an $\SLP$ over $\{a,b\}^{\pm1}$ of size $n$, and put
\[
 g=g_{\CA},\qquad \operatorname{ab}(g)=(p,q).
\]
If $\gcd(|p|,|q|)=1$, then one can construct, in deterministic polynomial
time, an $\SLP$ $\CG$ and a description of an automorphism $\Phi\in\Aut(F_2)$
such that
\[
 \val(\CG)\text{ represents }\Phi(g)
 \quad\text{and}\quad
 \operatorname{ab}(\Phi(g))=(1,0).
\]
The automorphism is recorded as a sequence of sign changes, interchanges,
and shears with binary parameters.  This record has bit-size polynomial in
$n$, while $\CG$ has structural size, and hence encoded length, polynomial in
$n$.
\end{lem}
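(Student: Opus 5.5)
Run the Euclidean algorithm on the exponent-sum pair, batching each division step into a single shear. Apply each elementary automorphism to the compressed word and track only the abelianization vector, which transforms linearly and never requires reduction. Begin by computing $(p,q)=\operatorname{ab}(g)$ in binary using Lemma~\ref{l:slp-ops}(d); both entries have $O(n)$ bits. The automorphisms used are the following.
\begin{enumerate}
\item Sign changes $a\mapsto a^{-1}$ or $b\mapsto b^{-1}$, which negate one coordinate.
\item The interchange $a\leftrightarrow b$, which swaps the coordinates.
\item Shears $\tau_m(a)=ab^{-m}$, $\tau_m(b)=b$ with $m$ in binary, which send $(p,q)\mapsto(p,q-mp)$, together with the symmetric shears $a\mapsto a$, $b\mapsto ba^{-m}$, which send $(p,q)\mapsto(p-mq,q)$.
\end{enumerate}
Each of these is an automorphism, and its effect on $\operatorname{ab}$ follows directly from the images of $a$ and $b$.

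The record of $\Phi$ is built as follows. First apply sign changes so that $p,q\ge0$. While both coordinates are positive, divide the larger by the smaller. If $P\le Q$ with $Q=mP+R$, apply $\tau_m$ to reach $(P,R)$; otherwise apply the symmetric shear. Because $\gcd(p,q)=1$ is preserved, the loop ends at $(1,0)$ or $(0,1)$. In the second case, finish with one interchange. The remainders decrease as in the ordinary Euclidean algorithm, so there are $O(\log(|p|+|q|+2))=O(n)$ stages. Each quotient has $O(n)$ bits, so the whole record has polynomial bit-size, and the integer arithmetic is polynomial.

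To build $\CG$, go through the stages in order, starting from $\CG_0=\CA$. At stage $j$, form $\CG_j$ by Lemma~\ref{l:compressed-substitution} from $\CG_{j-1}$ and the image programs $\CH_z$ for $z\in\{a,b\}^{\pm1}$. For a shear, $\CH_a$ is the concatenation of $a$ with the compressed power $b^{-m}$ from Lemma~\ref{l:compressed-powers}, and $\CH_{a^{-1}}$ comes from Lemma~\ref{l:slp-ops}(c). For sign changes and interchanges, each $\CH_z$ is a single letter. In every case the image programs have total size $O(\blen(m)+1)=O(n)$. We never need to reduce intermediate words, because $\val(\CG_j)$ represents the image element in $F_2$ and that is all the statement requires.

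The main obstacle is the size accounting. Lemma~\ref{l:compressed-substitution} gives $\lVert\CG_j\rVert\le c(\lVert\CG_{j-1}\rVert+O(n))$, and iterating this multiplicative constant $c$ over $O(n)$ stages would blow up exponentially. The first remedy I would try is to sharpen the accounting to an additive bound. If $\CG_{j-1}$ is already in Chomsky normal form, the construction adds only the image programs plus one binary rule per binary rule of $\CG_{j-1}$, and the leaf rules are replaced by image roots. So the binary-rule count grows by only $O(\blen(m_j))$, as in Proposition~\ref{p:iterated-endomorphisms}. This gives $\lVert\CG\rVert\le \lVert\CA\rVert+O(\sum_j(\blen(m_j)+1))$. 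That sum is $O(n)$, since $\prod_j m_j\le |p|+|q|$.

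If the additive accounting turns out to be awkward, the alternative is a layered construction in the style of Proposition~\ref{p:iterated-endomorphisms}. Build the images of the four letters under the full composite, from the last stage back to the first. Layer $j$ writes each letter's image as a bounded concatenation of layer-$(j+1)$ letter nonterminals and a shared power gadget of size $O(\blen(m_j))$, so the layers total $O(n)$. Then substitute into $\CA$ once.

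Either way the structural size of $\CG$ is polynomial in $n$, with encoded length polynomial by Proposition~\ref{p:slp-size-comparison}. By construction $\operatorname{ab}(\Phi(g))=(1,0)$.
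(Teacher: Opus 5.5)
Your proposal is correct and follows the paper's proof. Both run the Euclidean algorithm on the exponent-sum pair using batched shears with binary parameters, apply each step by compressed substitution, and keep size growth additive because these non-erasing substitutions preserve Chomsky normal form (after normalizing $\CA$ once), so each stage adds only the $O(\blen(m_j)+1)$ image programs. Using symmetric shears instead of interchange-then-$\tau_m$ is cosmetic. Your estimate $\sum_j(\blen(m_j)+1)=O(n)$ via $\prod_j m_j\le|p|+|q|$ is sharper than the $O(n^2)$ bound in the lemma's proof and matches the estimate in Remark~\ref{r:rank-two-complexity}.
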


\begin{proof}
The exponent sums are computed by Lemma~\ref{l:slp-ops}(d). Since
$|p|+|q|\le |w_{\CA}|\le2^{O(n)}$, these integers have $O(n)$ bits. Normalize
$\CA$ once at the outset. By the linear structural-overhead normalization in
Section~\ref{s:slp}, the normalized program has size $O(n)$ and encoded
length $O(n\log(n+2))$.

First apply, when necessary, the automorphisms
\[
 a\mapsto a^{-1},\ b\mapsto b,
 \qquad\text{and}\qquad
 a\mapsto a,\ b\mapsto b^{-1},
\]
so that the current exponent-sum pair is $(P,Q)=(|p|,|q|)$.  If $P=0$,
then $Q=1$, and we interchange $a$ and $b$.  Hence we may assume $P>0$.

While $Q\ne0$, proceed as follows.  If $P>Q$, apply the interchange
automorphism $a\leftrightarrow b$ and interchange $P,Q$.  We then have
$0<P\le Q$.  Compute
\[
 m=\left\lfloor\frac QP\right\rfloor,
 \qquad R=Q-mP,
\]
and apply
\[
 \tau_m(a)=ab^{-m},\qquad \tau_m(b)=b.
\]
On abelianization this sends $(P,Q)$ to
$(P,Q-mP)=(P,R)$. Continue with this new pair. Since the gcd is unchanged, the
process terminates at $(1,0)$.

Each sign change or interchange has constant-size terminal images.  To
apply $\tau_m$, use
\[
 a\mapsto ab^{-m},\qquad
 a^{-1}\mapsto b^m a^{-1},\qquad
 b\mapsto b,
 \qquad b^{-1}\mapsto b^{-1}.
\]
By Lemmas~\ref{l:compressed-substitution} and~\ref{l:compressed-powers},
the substitution is performed in polynomial time.
Choose the image programs in Chomsky normal form. Since every sign change,
interchange, and shear used here is non-erasing, the construction in
Lemma~\ref{l:compressed-substitution} preserves Chomsky normal form: it
recreates each existing binary production once and adds only the shared image
programs. Thus a shear with parameter $m$ increases the size by only
$O(\log(m+1))$; a sign change or interchange adds only constant size.

Let $S=P+Q$ immediately before a division stage, after any interchange.
The new sum is $P+R$. Since $Q=mP+R$, with $m\ge1$ and $0\le R<P$,
\[
 2(P+Q)-3(P+R)=(2m-1)P-R>0,
\]
and therefore
\[
 P+R<\frac23(P+Q)=\frac23S.
\]
Consequently there are $O(\log(|p|+|q|+1))=O(n)$ division stages.  Every
quotient $m$ is at most $|p|+|q|$, so
\[
 \sum_{\text{division stages}}\log(m+1)=O(n^2).
\]
The normalized initial program has size $O(n)$, and the preceding
substitutions increase it by
\[
 O\!\left(\sum_{\text{division stages}}\log(m+1)\right)=O(n^2).
\]
Thus every intermediate program has polynomial size.  Its encoded length is
polynomial in $n$ by Proposition~\ref{p:slp-size-comparison}.  The sequence
describing $\Phi$ has $O(n)$ stages and $O(n)$-bit parameters, hence
polynomial bit-size as well. The integer arithmetic
uses $O(n)$-bit integers, and there are only $O(n)$ substitutions, each on a
polynomial-size program. The total construction time is therefore polynomial.
\end{proof}

We can now prove the rank-two complexity result (cf. Theorem~\ref{t:rank-two-intro}):

\begin{thm}\label{t:rank-two-main}
The compressed primitivity problem in $F_2$ is decidable in deterministic
polynomial time.  Equivalently,
\[
 \CPrim_2\in\mathsf P.
\]
\end{thm}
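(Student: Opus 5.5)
The algorithm combines the exponent-sum test of Proposition~\ref{p:oz-classification}(a), the Euclidean reduction of Lemma~\ref{l:euclidean-automorphism}, and a final cyclic-length test. First parse and validate the encoding, rejecting invalid codes. Using Proposition~\ref{p:cyclic-reduction}(a), test whether $g=g_{\CA}$ is trivial; if so, reject. Compute $\operatorname{ab}(g)=(p,q)$ by Lemma~\ref{l:slp-ops}(d); the entries have $O(n)$ bits, so $\gcd(|p|,|q|)$ is computable in polynomial time by the Euclidean algorithm. If the gcd is not $1$, reject, which is correct by Proposition~\ref{p:oz-classification}(a). Note the convention $\gcd(0,q)=|q|$, so $(0,\pm1)$ passes and $(0,0)$ fails.

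Otherwise apply Lemma~\ref{l:euclidean-automorphism}. This gives, in polynomial time, an $\SLP$ $\CG$ of polynomial size representing $\Phi(g)$ for an explicit $\Phi\in\Aut(F_2)$ with $\operatorname{ab}(\Phi(g))=(1,0)$. Apply Proposition~\ref{p:cyclic-reduction}(b) to $\CG$ and compute $\lVert \val(\CG)\rVert_{\rm cyc}$ in binary. Accept if and only if this cyclic length equals $1$.

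For correctness, since $\Phi$ is an automorphism, $g$ is primitive if and only if $\Phi(g)$ is primitive. If $\Phi(g)$ is primitive, then Corollary~\ref{c:ab10} shows it is conjugate to $a$, so its cyclic length is $1$. Conversely, if the cyclic length is $1$, then $\Phi(g)$ is conjugate to a single letter $x^{\pm1}$, which is primitive; hence $g$ is primitive. Primitivity is conjugation-invariant, since conjugation is an inner automorphism. The exponent-sum constraint even forces the letter to be $a$, though this is not needed.

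For complexity, every step is polynomial in $n=\lVert\CA\rVert$. Lemma~\ref{l:euclidean-automorphism} guarantees that $\CG$ has size polynomial in $n$. Proposition~\ref{p:cyclic-reduction} then runs in time polynomial in $\lVert\CG\rVert$, hence in $n$, and encoded lengths are polynomially equivalent by Proposition~\ref{p:slp-size-comparison}.

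The only potentially delicate point is the polynomial size of $\CG$ after $O(n)$ shear substitutions. Iterated substitution could in principle compound in size, but Lemma~\ref{l:euclidean-automorphism} already controls this: the additive growth is $O(\log(m+1))$ per stage because the construction preserves Chomsky normal form. So the theorem reduces to assembling these pieces, with no step beyond invoking the earlier results.
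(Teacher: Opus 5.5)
Your proposal is correct and follows the paper's proof essentially step for step: reject unless $\gcd(|p|,|q|)=1$ by Proposition~\ref{p:oz-classification}(a), apply Lemma~\ref{l:euclidean-automorphism} to reach the exponent-sum pair $(1,0)$, cyclically reduce, and accept if and only if the cyclic length is $1$, with completeness coming from Corollary~\ref{c:ab10}. The differences are only cosmetic: your separate triviality test is already covered by the gcd test, since $\gcd(0,0)=0$, and, as you note, soundness does not require identifying the surviving letter as $a$.
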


\begin{proof}
Let $\CA$ be the input $\SLP$, let $g=g_{\CA}$, and compute
\[
 \operatorname{ab}(g)=(p,q)
\]
as in the proof of Lemma~\ref{l:euclidean-automorphism}.  If
$(p,q)=(0,0)$, reject.  Otherwise compute $\gcd(|p|,|q|)$ and reject if it is
not equal to $1$.  This is correct for every primitive input by
Proposition~\ref{p:oz-classification}(a).

Suppose now that $\gcd(|p|,|q|)=1$.  Apply
Lemma~\ref{l:euclidean-automorphism} to construct a polynomial-size $\SLP$
$\CG$ representing $\Phi(g)$, where
\[
 \operatorname{ab}(\Phi(g))=(1,0).
\]
Use Proposition~\ref{p:cyclic-reduction}(b) to construct an $\SLP$ for a
freely and cyclically reduced conjugate $u$ of $\Phi(g)$, and compute
$|u|$ in binary.  Accept if and only if
\[
 |u|=1.
\]
All these operations take deterministic polynomial time.

For soundness, suppose that the algorithm accepts.  A cyclically reduced
word of length one is one of $a^{\pm1},b^{\pm1}$.  Conjugation does not
change exponent sums, and $\operatorname{ab}(u)=(1,0)$; hence $u=a$.
Therefore $\Phi(g)$ is conjugate to $a$, so $\Phi(g)$ is primitive.  Since
$\Phi$ is an automorphism, $g$ is primitive.

For completeness, suppose that $g$ is primitive.  Then $\Phi(g)$ is
primitive and has exponent-sum pair $(1,0)$.  By
Corollary~\ref{c:ab10}, $\Phi(g)$ is conjugate to $a$.  Its cyclically
reduced length is therefore one, and the algorithm accepts.
\end{proof}

\begin{rem}[quantitative complexity in rank two]
\label{r:rank-two-complexity}
For $N\ge1$, let $T_{\rm cyc}(N)$ denote the worst-case number of bit
operations used by a fixed deterministic implementation of
Proposition~\ref{p:cyclic-reduction}(b), on inputs $\mathcal D$ with
$\beta(\mathcal D)\le N$, to
construct an $\SLP$ for the cyclically reduced form and to compute its length.
Suppose that
\[
 T_{\rm cyc}(N)=O(N^d)
\]
for some constant $d\ge1$.

Let $n=\lVert\CA\rVert$.  In the Euclidean construction of
Lemma~\ref{l:euclidean-automorphism}, let $m_1,\dots,m_k$ be the nonzero
Euclidean quotients, and let $S_i$ be the sum of the two nonnegative
coordinates immediately before the $i$th division.  If
\[
 Q_i=m_iP_i+R_i,\qquad 0\le R_i<P_i,
\]
then, up to an interchange of coordinates,
\[
 S_i=P_i+Q_i,\qquad S_{i+1}=P_i+R_i.
\]
Moreover,
\[
 2S_i-(m_i+1)S_{i+1}
 =(m_i+1)P_i-(m_i-1)R_i>0,
\]
and hence
\[
 m_i+1<2\frac{S_i}{S_{i+1}}.
\]
Taking logarithms and summing gives
\[
 \sum_{i=1}^k\log_2(m_i+1)
 < k+\log_2\frac{S_1}{S_{k+1}}
 =O(n),
\]
because $k=O(n)$, $S_1\le2^{O(n)}$, and the final pair is $(1,0)$, so
$S_{k+1}=1$.  Consequently,
\[
 \sum_{i=1}^k\blen(m_i)=O(n).
\]

It follows that the record of the automorphism $\Phi$ has bit-size $O(n)$ and
that the program $\CG$ representing $\Phi(g_{\CA})$ may be chosen with
\[
 \lVert\CG\rVert=O(n),
 \qquad
 \beta(\CG)=O\bigl(n\log(n+2)\bigr),
\]
where the second estimate follows from
Proposition~\ref{p:slp-size-comparison}.
The sequential substitution construction above uses
$O(n^2\log(n+2))$ grammar bit operations: there are $O(n)$ stages, and each
stage rewrites an $O(n)$-size grammar whose indices have
$O(\log(n+2))$ bits. The Euclidean algorithm performs $O(n)$ divisions and
associated operations on $O(n)$-bit integers. With conservative schoolbook
arithmetic, these use $O(n^3)$ bit operations, so all steps preceding the
final cyclic reduction have the same bound. Consequently, the algorithm in
Theorem~\ref{t:rank-two-main} runs in
\[
 O\!\left(
   n^3+T_{\rm cyc}\bigl(Cn\log(n+2)\bigr)
 \right)
\]
bit operations for some constant $C$.  In particular, if
$T_{\rm cyc}(N)=O(N^d)$, then the running time is
\[
 O\!\left(n^3+n^d\log^d(n+2)\right).
\]

Schleimer's compressed cyclic-reduction result supplies some absolute
polynomial degree $d$ for which the displayed conditional estimate applies.
The published proof does not state or optimize the numerical value of $d$,
and its precise value for a concrete implementation remains to be computed.
\end{rem}

\begin{rem}[literal shortening]\label{r:literal-shortening}
On a positive primitive conjugacy class with exponent-sum pair $(P,Q)$,
Proposition~\ref{p:christoffel-step} shows that the automorphism
$a\mapsto ab^{-m}$ performs exactly the proposed simultaneous cancellation
of the common power $b^m$.  The $b$-block exponents are $m$ or $m+1$, but
their cyclic order is the additional arithmetic information supplied by
the Christoffel formula.  After the shear, the residual exponents are $0$
or $1$, and the roles of $a$ and $b$ may be interchanged.  Thus the
Euclidean process does shorten a primitive word by a uniform multiplicative
factor at each batched stage.  The algorithm above does not need to recover
the exponentially long block decomposition: it performs the same shears
directly on the input $\SLP$ and makes a single cyclic-length test at the
end.
\end{rem}

\section{Disclosure of AI use}

ChatGPT was used during the preparation of this manuscript. The author
independently checked and edited all mathematical arguments and takes full
responsibility for the final content.

\end{document}